\title{generalized wigner}
\date{\today}
\newtheorem{theorem}{Theorem}[section]
\newtheorem{definition}[theorem]{Definition}
\newtheorem{assumption}[theorem]{Assumption}
\newtheorem{lemma}[theorem]{Lemma}
\newtheorem{remark}[theorem]{Remark}
\newtheorem{corollary}[theorem]{Corollary}
\newcommand{\ti}{\text{i}}
\newcommand{\dg}{\mathrm{diag}}
\newcommand{\E}{\mathbb E}
\newcommand{\N}{\mathbb N}
\newcommand{\R}{\mathbb R}
\newcommand{\Ex}[1]{\left\langle {#1}\right\rangle}
\newcommand{\Val}{\mathrm{Val}}
\newcommand{\tr}{\mathrm{tr}}
\newcommand{\ext}{\mathrm{ext}}
\numberwithin{equation}{section}
\begin{document}
\title{Eigenstate Thermalization Hypothesis for Generalized Wigner Matrices}

\author{Arka Adhikari\thanks{Department of Mathematics, Stanford University, Stanford CA 94305-2125, USA. Supported in part by NSF grant DMS-2102842}
\and Sofiia Dubova \thanks{Department of Mathematics, Harvard University, Cambridge MA 02138, USA.}
\and Changji Xu\thanks{Center of Mathematical Sciences and Applications, Harvard University, Cambridge MA 02138,USA.}
\and Jun Yin\thanks{Department of Mathematics,  University of California, Los Angeles, Los Angeles, CA 90095, USA. Supported in
part by the NSF grant DMS-1802861.}}

\maketitle
\begin{abstract}
    In this paper, we extend results of Eigenvector Thermalization to the case of generalized Wigner matrices. Analytically, the central quantity of interest here are multiresolvent traces, such as $\Lambda_A:= \frac{1}{N} \text{Tr }{ GAGA}$. In the case of Wigner matrices, as in \cite{cipolloni-erdos-schroder-2021}, one can form a self-consistent equation for a single $\Lambda_A$. There are multiple difficulties extending this logic to the case of general covariances. The correlation structure does not naturally lead to deriving a closed equation for $\Lambda_A$; this is due to the introduction of new terms that are quite distinct from the form of $\Lambda_A$. We find a way around this by carefully splitting these new terms and writing them as sums of $\Lambda_B$, for matrices $B$ obtained by modifying $A$ using the covariance matrix. The result is a system of inequalities relating families of deterministic matrices. Our main effort in this work is to derive  this system of  inequalities. 
    
\end{abstract}
\section{Introduction}
\subsection{Background and History}

Ever since Wigner proposed the study of random matrices in 1960 \cite{Wigner} in order to understand the energy spectra of heavy atoms, there has been significant effort in trying to understand the behavior of the eigenvalue fluctuations of random matrices. Wigner's celebrated conjecture states that the statistics of the eigenvalue differences should only depend on the symmetry class of the model, not on the details of the randomness that generated the model. There have been multiple works in recent years that shed light on this phenomenon, \cite{ErdSchYau2011, TaoVu2011}.

Even though the eigenvalues of random matrices are relatively well understood, the eigenvector statistics of random matrices remain largely mysterious. In contrast to the statistics of eigenvalue distributions where there are many powerful tools such as the Dyson-Brownian motion \cite{ErdSchYau2011,ErdYau2012,ErdYauYin2012Rig,ErdPecRamSchYau2010,ErdYau2012singlegap,BouErdYauYin2015,BouErdYauYin2017,ErdYauYin2012Univ,KnoYin2013,ErdKruSchro20,ErdKruScho19}, the four moment method \cite{TaoVu2011}, and direct computation via the Brezin-Hikami formula \cite{BreHik96, BreHik97}, the equations determining the behavior of the eigenvector are less amenable to analysis.   

{ There are multiple conjectures regarding the behavior of the eigenvectors of random matrices inspired by conjectures derived from studying the quantum analogues of dynamical systems. The BGS conjecture \cite{BGS} proposed that the eigenvalue behavior of the quantum analogues of classically chaotic dynamical systems should follow appropriate random matrix statistics; this conjecture, and various others, suggested a deep link between dynamical systems and random matrix theory. The study of eigenstates of these quantum dynamical operators led to very rich behavior; such as the celebrated Quantum Unique Ergodicity conjecture by Rudnik and Sarnak \cite{RudSar1994}. This suggests that, as $i \to \infty$, all eigenstates $\phi_i(x)$ of the Laplace-Beltrami operator on a surface with ergodic geodesic flow  have an associated measure $|\phi_i(x)|^2 \text{d}x$ that becomes flat as $i \to \infty$, except for an exceptional sequence. The Eigenstate Thermalization Hypothesis \cite{deutsch91,deutsch18, srednicki94} is implied by similar claims regarding the value of the related observable $\langle \phi_i, A \phi_j \rangle$ $i,j \to \infty$,  for appropriate operators $A$. For further discussion of these results and references, refer to \cite{cipolloni-erdos-schroder-2021}.

There has recently been significant work in the random matrix community, to try to find analogs of these eigenvector behaviors in random matrix theory. Estimates from Green's functions \cite{isotropic,ESY_local,ErdYauYin2012Rig, KY_isotropic} showed delocalization of eigenvectors; namely, that the maximum entry of the eigenvector is of order close to $\frac{1}{N}$. These results have been strengthened in \cite{BouYau2017} to show Gaussian fluctuation for individual eigenvector entries; this is the appropriate analog of QUE for random matrices. The paper \cite {BouYau2017} shows
\begin{equation}
    \sqrt{N} \langle u_i, q \rangle  \to \mathcal{N}(0,1),
\end{equation}
i.e., the inner product of an eigenvector with a fixed vector approaches a standard normal random variable. Other results regarding proving QUE results include \cite{LP2021,Benigni2020,BouYau2017,PartI,XuYangYauYin}.  The paper \cite{Marcinek_thesis} studied the correlation of a small number of eigenvector entries ($\mathcal O(N^{\epsilon})$), and showed joint Gaussian behavior on these small windows. These works used was the eigenvector moment flow equations derived from Dyson Brownian motion. However, these equations are very difficult to analyze and do not yet give a complete description of the eigenvector statistics.  

}
The random matrix analog of eigenstate thermalization was studied in \cite{cipolloni-erdos-schroder-2021} by G. Cipolloni, L. Erdős, and D. Schr{\"o}der. In this paper, the authors tried to establish more global results on the distribution of the eigenvector.
Namely, they were able to show, for a Wigner matrix, that, with overwhelming probability,
\begin{equation}
    \max_{i,j} | \langle u_i, A u_j \rangle - \delta_{ij} \langle A \rangle \rangle| \lesssim \frac{N^{\epsilon}}{\sqrt{N}},
\end{equation}
where the error in the right hand side is optimal. In what follows, we use the notation $ \langle \cdot, \cdot \rangle$ to denote inner product in vector computations or the normalized trace $\langle A \rangle := \frac{1}{N} \text{Tr}[A]$ as appropriate in context.  Some of these results were extended to prove the normality of the terms $\langle u_i, A u_j \rangle$ in \cite{cipolloni-erdos-schroder-2022-normality, cipolloni-erdos-schroder-2020-functionalCLT} and multi-resolvent local laws in  \cite{cipolloni2022optimal}.

Rather than using the eigenvector moment flow, they directly studied more global quantities like $\Lambda:=\Ex{ GAGB}$, where $G = (H-z)^{-1}$ is the Green's function of the Wigner random matrix $H$, while $A$ and $B$ are arbitrary matrices. These quantities reveal more information about the correlation of eigenvectors on larger scales and, furthermore, are easier to manipulate analytically. The method of this work involved using the cumulant expansion to form a self-consistent equation for $\Lambda$. The details of the cumulant expansion procedure meant that the results of \cite{cipolloni-erdos-schroder-2021} were restricted to random matrices of Wigner type.

{In the paper \cite{cipolloni2022optimal}, multi-resolvent local laws for Wigner matrices were considered. They derived a hierarchy of equations to get more detailed estimates for traces of high powers of the form $(GA)^k$ which can also accommodate different traceless observables and handle them uniformly in all choices of observables. These results were expanded in the recent works \cite{cipolloni2022rank, cipolloni2023optimal,cipolloni2023gaussian,benigni2022fluctuations}: In \cite{cipolloni2022rank}, general local law for Wigner matrices which optimally handles observables of arbitrary rank were shown; thus, the paper unifies the averaged and isotropic local laws.
\cite{cipolloni2023gaussian} establishes the Eigenstate Thermalisation Hypothesis and Gaussian fluctuations for Wigner matrices with an arbitrary deformation. In \cite{cipolloni2023optimal}, the authors prove an optimal lower bound on the diagonal overlaps of the corresponding non-Hermitian eigenvectors.
\cite{benigni2022fluctuations} derives Gaussian fluctuations and gives a analog of the Berry conjecture for random matrices. 
}

\subsection{Difficulties in the case of Generalized Wigner matrices}

A Wigner matrix is a generalization of the the GUE or appropriate Gaussian ensemble. All of the entries are independent and identically distributed (i.i.d.). Due to this nice symmetric structure, one might believe on an intuitive level, that all of the relevant eigenvalue and eigenvector statistics would match that of the corresponding Gaussian ensemble. Namely, the eigenvalues are distributed according to the sine kernel and, more relevant to our case, the eigenvectors are Haar distributed. 

In the context of Eigenstate Thermalization,  one can prove the following claim,
\begin{equation}
\Ex{GA_1GA_2} \approx m^2 \Ex{A_1A_2}.
\end{equation}
Here, $m$ is the solution to the semicircle equation $$ m^2 - zm +1=0,$$ and one has the approximation $G_{ii} \approx m$, for all diagonal entries $G_{ii}$ of the resolvent matrix. Thus, to leading order, one can derive the approximation in Eigenstate Thermalization by replacing the resolvent matrices $G$ by the approximation $mI$. In this way, there is seemingly little contribution from the off-diagonal entries of $G$. As such, this statistic is further evidence for the approximate Haar distribution of the eigenvector entries in a Wigner matrix.

A generalized Wigner matrix is an ensemble of random matrices where every entry has an independent entry, up to symmetry conditions, but each entry has a different value of the variance; thus, the entries are not i.i.d. If $W$ is our generalized Wigner matrix, then $\mathbb{E}[|W_{ij}|^2] = S_{ij}$, for some number $S_{ij}$. The only constraint that we have is the following normalization constraint $$ \sum_{j} S_{ij} =1, \forall i.$$ Even with this constraint in place, one still has the following leading order behavior of the entries of the resolvent,
$$
G_{ii} \approx m, |G_{ij}| \approx \frac{1}{N \text{Im}[z]} = o(1).
$$

However, in the context of generalized Wigner matrices, we obtain an entirely different result. We have instead,
\begin{equation} \label{eq:traceGenWig}
    \Ex{G(z_1)A_1G(z_2)A_2} \approx m(z_1) m(z_2) \Ex{A_1A_2} + m(z_1) m(z_2) \frac{1}{N}\sum_{\alpha, \beta} (A_1)_{\alpha \alpha} \left[S(I-m(z_1)m(z_2) \mathcal{C})^{-1}\right]_{\alpha \beta} (A_2)_{\beta\beta},
\end{equation}
where $\mathcal{C}_{\mu\nu} = S_{\mu\nu} - \frac{1}{N}$ for any $\mu, \nu \in [N]$.

To get the above expression, it is no longer possible to replace $G(z)$ by the most obvious approximation $G(z) \approx m(z)I$, even though the leading behavior entry of each of the entries in the resolvent $G$ is the same as that of the Wigner random matrices. The implication of this fact is that there are detailed correlations present in the distribution of the eigenvectors of the generalized Wigner ensemble that are not present in the Wigner ensemble. In particular, the distribution of the eigenvectors of the generalized Wigner ensemble are far from Haar distrubuted.
Furthermore, the covariances of the terms $\langle u_i ,A u_j \rangle$ would depend on the eigenvector indices $i$ and $j$, while for the pure Wigner matrix, the covariance structure would be homogeneous in $i$ and $j$. We also remark here that this is only an effect you see in full rank matrices $A$; in the context of QUE with finite rank matrices (or even $N^{\epsilon}$ rank matrices for $\epsilon <1$), there is no difference in the covariance structure of eigenvectors for pure Wigner matrices and generalized Wigner matrices.

When coming to the proof of equation \eqref{eq:traceGenWig}, the main difficult is a presence of a more complicated term during the derivation of the self-consistent equation for the quantity $\Lambda$. Namely, if we consider the case of computing $\Ex{G A_1 G A_2}$ and $A_1,A_2$ are both traceless matrices, we have to deal with a term of the following form,
\begin{equation}
     \frac{1}{N}  \sum_{i, j} S_{ij} (G(z_1) A_1 G(z_2))_{jj} (G(z_2) A_2)_{ii}.
\end{equation}

In the Wigner case, we have that $S_{ij}=\frac{1}{N}$ for all $i$ and $j$. Thus, the above quantity can be simplified as,
\begin{equation} \label{eq:Sterm}
     \frac{1}{N} \sum_{i, j} S_{ij} (G(z_1) A_1 G(z_2))_{jj} (G(z_2) A_2)_{ii}= \Ex{G(z_1) A_1 G(z_2)} \Ex{G(z_2) A_2}.
\end{equation}

Now, since we have the heuristic that $\Ex{G(z_2) A_2} \approx m(z_2) \Ex{A_2} =0$ and $m$ is the Stieltjes transform for the  semicircle distribution, we can believe that the term above is merely a lower order term that should not complicate the analysis.

However, when $S_{ij} \ne \frac 1 N$ uniformly, there is no longer any way to write it as a product of traces. As such, it seems like using the fact that $A_1$ and $A_2$ are both traceless do not seem to give any cancellations. Indeed, if we take the approximation $G(z_i) \approx m(z_i)I$, we might guess that the term in \eqref{eq:Sterm} is at least as large as,
\begin{equation}
    \frac{1}{N} m^2 \sum_{i,j} S_{ij} (A_1)_{jj} (A_2)_{ii}.
\end{equation}
We cannot hope for the quantity above to be of smaller order.

The fact that the contribution of the term \eqref{eq:Sterm} presents us with two problems. The first issue is to actually determine the value to leading order. The second is to actually present this term in such a way that we get a closed equation. As we have mentioned earlier, in the Wigner case, these terms can be presented as products of traces; this means that we can derive closed equations just involving these products of traces. Without a closed equation, we cannot hope to analyze the resulting self-consistent equation; thus, it is of paramount importance to rewrite this term in a manner that is amenable to analysis. 
Our first main step is to write such terms as a product of traces by carefully decomposing the covariance matrix $S_{ij}$. By taking the square root, we have that,
\begin{equation*}
    S_{ij} = \sum_{\mu} \tilde{S}_{i \mu} \tilde{S}_{\mu j}.
\end{equation*}
With this decomposition in hand, we can rewrite,
\begin{equation} 
\begin{aligned}
    \frac{1}{N} \sum_{i, j} S_{ij} (G (z_1)A_1 G(z_2))_{jj} (G(z_2) A_2)_{ii} & = \frac{1}{N} \sum_{i,j, \mu} \tilde{S}_{i \mu} \tilde{S}_{j \mu} (G(z_1) A_1 G(z_2))_{jj} (G(z_2) A_2)_{ii} \\ &= \frac{1}{N} \sum_{\mu} \Ex{G(z_2) A_2 N \dg\tilde{S}_{\mu}} \Ex{
    G(z_1) A_1 G(z_2) N \dg \tilde{S}_{\mu}}.
\end{aligned}
\end{equation}

Here, $\dg \tilde{S}_{\mu}$ is the diagonal matrix whose $i$th entry is given by $\tilde{S}_{i \mu}$. The above expression looks like a more closed expression, due to the fact that we have written the above as a product of traces; however, we still need to consider traceless matrices if we actually want to consider eigenstate thermalization. 

An immediate solution here is to consider the traceless parts of the matrices $A_2 N \dg \tilde{S}_{\mu}$ and $N \dg \tilde{S}_{\mu}$, but this is still not closed since we keep introducing new traceless matrices of the form $A_2 N \dg \tilde{S}_{\mu}$. {The result of this procedure is to generate a chain of equations relating the $\Lambda_{A}$ of certain matrices $A$ to $\Lambda_{B}$ of other matrices $B$. At each step of this procedure, the hierarchy of matrices considered grows rapidly, and it is not clear that this chain would lead to an effective bound.  For instance, the matrices at level $k+1$ consists of any product of two matrices at level $k$. If one did not have precise control of appropriate prefactors when deriving the inequalities, then it would be impossible to derive useful information. For example, if one were to try to prove the case for non-diagonal matrices at the very beginning, one would have to deal with a cubic term that cannot be controlled via iteration. We circumvent this issue by first proving estimates for diagonal matrices, in which one can apply improved local law estimates, in order to have optimal estimates for the diagonal $\Lambda_S$. These estimates are key inputs for deriving bounds on $\Lambda$ for the general case of non-diagonal matrices.}  The main achievement of Sections \ref{sec:diagM} and \ref{sec:generalM} of this manuscript is to derive this system of inequalities. 

{
{\it Acknowledgement:} The authors are grateful to L{\'a}szl{\'o} Erdős for the useful comments and to Horng-Tzer Yau for the valuable discussions.}

\subsection{Conventions and Notation}

We use the notation $\prec$ to indicate stochastic domination (see also e.g. \cite{cipolloni-erdos-schroder-2021}) indicating a bound with very high probability up to a factor $N^\epsilon$ for any small $\epsilon > 0$. If
\begin{equation}
    X = \left( X^{(N)}(u) \mid N \in \mathbb N, u \in U^{(N)}\right) \quad and \quad    Y = \left(Y^{(N)}(u) \mid N \in \mathbb N, u \in U^{(N)}\right) 
\end{equation}
are families of non-negative random variables indexed by $N$, and possibly some parameter $u$, then we say
that X is stochastically dominated by $Y$, if for all $\epsilon, D>0$ we have
$$\sup_{u \in U^{(N)}} \mathbb P\left(X^{(N)}(u) \geq N^\epsilon Y^{(N)}(u) \right) \leq N^{-D}$$
for large enough $N \geq N_0(\epsilon, D)$. In this case we use the notation $X \prec Y$ or $X = \mathcal O_\prec (Y)$.

For any $N\times N$ matrix $M$ we use the following notation for the normalized trace:
\begin{equation}
    \Ex{M} = \frac{1}{N} \tr M.
\end{equation}



\section{Main Results}

In this paper, we consider generalized Wigner matrices. Namely, these are Hermitian matrices, where each entry is independent, but they are allowed to have different variances. Our normalization condition on the variances is that $ \sum_{j} S_{ij} =1, \forall i$, where $S_{ij}$ is the variance of the $(i,j)$th entry. We let $S=[S_{ij}]$ denote the full covariance matrix of our generalized Wigner matrix. For a more formal definition, see Section 2 of \cite{ErdYauYin2012Rig}. To simplify our analysis, we need the following assumption on the entries of the square root of $S$.  It is easy to see that the following assumption can hold for small perturbations of the covariance matrix of a Wigner matrix. 







\begin{assumption}\label{asmp:squareroot}
Let $\tilde{S}$ be the square root of $S$. We assume that there is a constant $C>0$ such that for all $i,j$, we have,
    \begin{equation}
    \frac{1}{C} \frac{1}{N} \le \tilde{S}_{ij}  \le C \frac{1}{N}.  
    \end{equation}

    One can check that this condition holds if $S$ were the matrix whose entries were all $\frac{1}{N}$. 

\end{assumption}

From the paper \cite{ErdYauYin2012Rig}, we have the following a-priori estimate on the behavior of the Green's function of our generalized Wigner matrices. These will be used multiple times in the proof.

\begin{theorem}[{\cite[Theorems 2.1, 2.2]{ErdYauYin2012Rig}}]
Let $W$ be a generalized Wigner matrix. We assume that the probability distributions of each entry of $W$ have a uniform sub-exponential decay. Then the following estimates hold:
\label{thm:lowcallaw}
\begin{equation}
\label{eq:stronglocallaw}
    \| G(z) - m I \|_{\max} \prec \sqrt{\frac{\Im m(z)}{N\eta}}  + \frac{1}{N\eta} \quad \text{ for all }|E| \leq 5, \eta \geq N^{-1 + \epsilon}\,.
\end{equation}
\begin{equation}
\label{eq:rig}
    |\lambda_i - \gamma_i| \prec \min(i,N-i+1)^{-1/3}N^{-2/3}\text{ for all }1\leq i\leq N\,.
\end{equation}
\end{theorem}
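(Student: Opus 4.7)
The plan is to combine a self-consistent vector equation for the diagonal Green's function entries $G_{ii}$ with a bootstrap in the imaginary part $\eta$ of the spectral parameter. First I would establish the entrywise local law \eqref{eq:stronglocallaw}, and then deduce the rigidity estimate \eqref{eq:rig} by the standard counting-function argument.

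The first step is the Schur complement formula, which yields
\[
\frac{1}{G_{ii}} = -z + H_{ii} - \sum_{k,l \ne i} H_{ik} G^{(i)}_{kl} H_{li},
\]
where $G^{(i)}$ is the resolvent of the minor obtained by removing row and column $i$. Conditioning on the $i$th row/column and using $\E |H_{ik}|^2 = S_{ik}$ isolates the deterministic piece $\sum_k S_{ik} G^{(i)}_{kk}$ and a centered fluctuation $Z_i$. The minor-full resolvent identity $G^{(i)}_{kk} = G_{kk} - G_{ki}G_{ik}/G_{ii}$ lets one replace $G^{(i)}_{kk}$ by $G_{kk}$ up to controllable corrections, giving
\[
\frac{1}{G_{ii}} = -z - \sum_k S_{ik} G_{kk} + Z_i + (\text{small error}).
\]
Subtracting the semicircle equation $1/m = -z - m$ (valid because of the normalization $\sum_j S_{ij} = 1$) and linearizing yields $(I - m^2 S)(G_{\cdot\cdot} - m\,\mathbf 1) \approx m^2 Z + (\text{small})$. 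The spectral structure of $S$ (Perron eigenvector $\mathbf 1/\sqrt N$ with eigenvalue $1$, spectral gap inherited from the variance-profile assumption) gives precise control of $(I - m^2 S)^{-1}$ in the norms of interest, with the controlled degeneration near the spectral edge that matches the $\sqrt{\operatorname{Im} m/(N\eta)}$ factor in the rate.

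The heart of the proof is the control of $Z_i$. A large-deviation bound for quadratic forms of the independent sub-exponential entries $H_{ij}$ gives the pointwise estimate $|Z_i| \prec \sqrt{\operatorname{Im} m /(N\eta)}$, while a \emph{fluctuation averaging lemma}, proved by iterating partial expectations with respect to each row, shows that weighted averages $\tfrac{1}{N}\sum_i a_i Z_i$ gain an additional factor of $(N\eta)^{-1/2}$. This averaging gain is what produces the $1/(N\eta)$ part of the rate and what allows the scheme to close. The main obstacle is precisely this: combining the pointwise and averaged bounds into a self-improving step at a single scale $\eta$ (weak a priori bound in, strong bound out), and then continuing in $\eta$ from the initial macroscopic scale $\eta \sim 1$ down to $\eta \ge N^{-1+\epsilon}$ via a continuity argument. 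The inhomogeneous variance profile makes the stability analysis more delicate than in the Wigner case, since one must track how $(I - m^2 S)^{-1}$ acts on different modes (average versus fluctuation around $\mathbf 1$).

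For \eqref{eq:rig}, one integrates the local law against the Helffer-Sjöstrand representation of a smoothed indicator function $\chi_{[-\infty, E]}$, converting $|m_N(z) - m(z)| \prec (N\eta)^{-1}$ into a bound on $|\mathcal N(E) - N \int_{-\infty}^E \rho_{\mathrm{sc}}|$ of the same order. Inverting this counting-function estimate, together with the semicircular density behavior $\rho_{\mathrm{sc}}(E) \sim \sqrt{(2-|E|)_+}$ near the edges, yields the quantile bound with the characteristic $\min(i, N-i+1)^{-1/3}$ edge scaling. The step where I would expect to spend the most care is again the fluctuation averaging: tracking enough structure of the correction terms so that the final averaged error matches the claimed rate uniformly in $z$ down to the optimal scale.
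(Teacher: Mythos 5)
The paper does not prove this statement; it imports it verbatim from \cite{ErdYauYin2012Rig}. Your sketch is a faithful and accurate outline of the argument given there: Schur complement to derive the self-consistent vector equation for $G_{ii}$, stability analysis of $(I - m^2 S)^{-1}$ using the spectral gap of the variance matrix $S$ on the orthogonal complement of $\mathbf 1$, large-deviation bounds for the quadratic forms $Z_i$, the fluctuation averaging lemma to upgrade the rate on averages from $(N\eta)^{-1/2}$ to $(N\eta)^{-1}$, a bootstrap in $\eta$ from macroscopic scales down to $\eta \ge N^{-1+\epsilon}$, and then Helffer--Sj\"ostrand for the counting function to obtain rigidity with the $\min(i,N-i+1)^{-1/3}N^{-2/3}$ edge scaling. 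One small point worth flagging: to get the $1/(N\eta)$ rate needed for rigidity you need the \emph{averaged} local law $|m_N - m| \prec (N\eta)^{-1}$ rather than the entrywise bound \eqref{eq:stronglocallaw} alone, since the $\sqrt{\Im m/(N\eta)}$ term in the entrywise estimate is too large; you do implicitly invoke fluctuation averaging for this, but it deserves to be stated as a separate intermediate result before passing to the Helffer--Sj\"ostrand step.
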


With these preliminaries in hand, we can state our main result.

\begin{theorem} \label{thm:EigenstateThermHyp}
Let $M$ be a Hermitian matrix with trace $0$ and bounded norm $||M||\le 1$. Let $W$ be our random generalized Wigner matrix as we have previously constructed; let $\lambda_1 \ge \lambda_2 \ldots \ge \lambda_N$ be its eigenvalues with corresponding eigenvectors $u_1,u_2,\ldots,u_n$. With overwhelming probability for any $\xi>0$, we can derive the following estimates:
\begin{equation}
    \max_{i,j}|\langle u_i, M u_j \rangle| + \max_{i,j} |\langle u_i, M \overline{u}_j \rangle| \le \frac{N^{\xi}}{\sqrt{N}}\,.
\end{equation}
\end{theorem}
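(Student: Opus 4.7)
The first step is to express the eigenvector overlaps in terms of multi-resolvent traces. Using the spectral decomposition of $G$ one has, for $z_k = E_k + \ti\eta_k$,
\begin{equation*}
    \tr\bigl[\Im G(z_1)\,M\,\Im G(z_2)\,M\bigr]
    = \sum_{i,j} \frac{\eta_1\eta_2\,|\langle u_i, M u_j\rangle|^2}{((\lambda_i-E_1)^2+\eta_1^2)((\lambda_j-E_2)^2+\eta_2^2)},
\end{equation*}
and an analogous identity for $|\langle u_i, M\overline u_j\rangle|^2$ using $G^\top$ (equivalently, the spectral resolution in the $\overline{u}_j$ basis). Choosing $\eta_k = N^{-1+\xi/2}$ and invoking rigidity \eqref{eq:rig}, only $O(N^{\xi/2})$ eigenvalues contribute to each factor in the denominator, so an averaged bound of the form $\Ex{\Im G(z_1) M \Im G(z_2) M}\prec (N\eta_1\eta_2)^{-1}$ would yield $|\langle u_i, Mu_j\rangle|^2 \prec N^{\xi-1}$, which is the claim. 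The remaining task is therefore to prove this averaged multi-resolvent bound, uniformly in $E_k$ and in traceless $M$ with $\|M\|\le 1$.

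\textbf{Self-consistent equation and the covariance term.} Following \cite{cipolloni-erdos-schroder-2021}, I would introduce $\Lambda_M := \Ex{G(z_1)M G(z_2)M}$ and derive a self-consistent equation by applying the cumulant expansion to each factor of $W$ inside $GMGM$. The Wigner argument closes on a scalar equation for $\Lambda_M$, but here the variance profile $S$ produces the extra term $N^{-1}\sum_{ij} S_{ij}(GMG)_{jj}(GM)_{ii}$, which is not a product of traces. Using the factorization $S_{ij} = \sum_\mu \tilde S_{i\mu}\tilde S_{j\mu}$ from Assumption~\ref{asmp:squareroot}, I would rewrite it as $\sum_\mu \Ex{GM\cdot N\dg\tilde S_\mu}\Ex{GMG\cdot N\dg\tilde S_\mu}$ and split each $N\dg\tilde S_\mu$ into its trace and traceless parts. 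The self-consistent equation then couples $\Lambda_M$ to $\Lambda_{M'}$ for new observables $M' = M\bigl(N\dg\tilde S_\mu - \Ex{N\dg\tilde S_\mu}I\bigr)$, and the stability operator is $I - m(z_1)m(z_2)\mathcal{C}$ with $\mathcal{C} = S - N^{-1}J$. The normalization $\sum_j S_{ij}=1$ forces $\mathcal{C}$ to have spectral radius strictly less than $1$ on the traceless subspace, which provides the invertibility needed for the self-consistent equation.

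\textbf{Hierarchy and the diagonal case.} Iterating the splitting generates a chain of observables, each produced from the previous by multiplying with a diagonal matrix $N\dg\tilde S_\mu$ and subtracting off the trace. Under Assumption~\ref{asmp:squareroot} these diagonal factors are uniformly bounded in operator norm, but the chain does not close on its own. My strategy would mirror the one outlined in the introduction: first prove the averaged bound for $\Lambda_M$ when $M$ is \emph{diagonal}, because the hierarchy then remains inside diagonal matrices and one can exploit the strong entrywise local law \eqref{eq:stronglocallaw} together with the explicit deterministic formula from \eqref{eq:traceGenWig} to obtain optimal estimates. This provides the a priori input needed for the general case.

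\textbf{General $M$ and the main obstacle.} With the diagonal estimates as input, I would then address general traceless $M$ by setting up a Gronwall-type system of inequalities for the family $\{\Lambda_{M_k}\}$ generated by the hierarchy. The main obstacle, as flagged in the introduction, is the three-resolvent terms $\Ex{GM_1GM_2GM_3}$ that appear in the cumulant expansion when $M$ is not diagonal: these cannot be majorized by $\Lambda$-type quantities alone, and naive estimates would blow up the iteration because the number of observables at each level grows. The key is to track the prefactors of these cubic terms sharply across the hierarchy, and to use the diagonal-case result to absorb the dangerous diagonal parts of the newly generated matrices $M\cdot N\dg\tilde S_\mu$. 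Closing the resulting system of inequalities, uniformly down to the optimal scale $\eta \gtrsim N^{-1+\xi/2}$, then delivers the required averaged bound, and the spectral reduction of the first paragraph completes the proof of Theorem~\ref{thm:EigenstateThermHyp}.
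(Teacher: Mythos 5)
Your overall strategy tracks the paper's: reduce to an averaged multi-resolvent bound, derive a self-consistent equation by renormalized cumulant expansion, factorize $S=\tilde S\tilde S$ to rewrite the covariance term as a sum of traces, pass to the traceless parts, handle diagonal observables first, and then close a system of inequalities for general $M$. The paper formalizes this through the windowed averages $\Xi_M(J)$ and the hierarchy $\Lambda_k$, and reduces Theorem~\ref{thm:EigenstateThermHyp} to $\Xi_M,\overline\Xi_M\prec 1$ in a short paragraph, so the heavy lifting you describe corresponds to Theorem~\ref{thm:Xicontrolparam}. That structure you have essentially right.

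However, the target bound in your spectral reduction is off by a large power of $N$, and as written the reduction would not prove the theorem. With $\eta_k=N^{-1+\xi/2}$ one has $(N\eta_1\eta_2)^{-1}=N^{1-\xi}\gg 1$, whereas the bound actually needed (and proved; see Lemma~\ref{GAGA&Xi}) is $\Ex{\Im G(z_1)M\Im G(z_2)M^*}\prec\rho_1\rho_2=O(1)$ in the bulk. Indeed, the single summand $i=i_0$, $j=j_0$ already contributes $\frac{|\langle u_{i_0},Mu_{j_0}\rangle|^2}{N\eta_1\eta_2}$ to the normalized trace, so a bound by $(N\eta_1\eta_2)^{-1}$ only yields the trivial $|\langle u_i,Mu_j\rangle|^2\prec 1$; you need the $O(1)$ bound (divided by $\rho_1\rho_2$) to get $|\langle u_i,Mu_j\rangle|^2\prec N\eta_1\eta_2\rho_1\rho_2\approx N^{\xi-1}$. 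This is not cosmetic: the entire point of the analysis is to improve the trivial estimate $\Ex{\Im G M\Im GM^*}\prec\rho/\eta$ down to $\prec\rho_1\rho_2$, and your stated target sits between these and would not close the argument.

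A second, milder point: the ``cubic term'' in the paper's introduction does not refer to three-resolvent traces $\Ex{GM_1GM_2GM_3}$. It refers to cubic products of the control parameters, such as $\Lambda_1\Lambda_k\Lambda_{k+1}$ in Lemma~\ref{lem:GAGAdiag}. The reason diagonal $M$ is treated first is that for diagonal observables the entrywise local law yields sharper bounds on errors like $\Ex{(G-m)A\,N\dg\tilde S_\mu}$ (order $(N\eta)^{-1/2}$), which lets those cubic-in-$\Lambda$ terms be absorbed; these diagonal estimates then feed into the general case (Lemma~\ref{lem:GAGAgeneral}), where the recursion closes with only quadratic-in-$\Lambda$ terms. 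You also omit the continuity-in-$J$ bootstrap used in Section~\ref{sec:diagM} to propagate the bound from the trivial scale $J=N$ down to $J=N^\epsilon$; this is needed to initiate the iteration on $\Lambda_1$.
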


We study the entrywise maximum through the following intermediate quantity $\Xi_M$, as in \cite{cipolloni-erdos-schroder-2021}. $\Xi_M$ computes averaged versions of the quantity in interest in Theorem \ref{thm:EigenstateThermHyp}. 

\begin{definition}
Let $A$ be a matrix and $J \in \N$. We define $\Xi_A,\overline{\Xi}_A$ as,
\begin{equation}
\begin{aligned}
    &\Xi_{A}(J):= \frac{N}{(2J)^2} \max_{i_0,j_0} \sum_{|i-i_0| \le J} \sum_{|j-j_0| \le J} |\langle u_i, A u_j \rangle|^2\,,\\
    & \overline{\Xi}_{A}(J):= \frac{N}{(2J)^2} \max_{i_0,j_0} \sum_{|i -i_0|\le J} \sum_{|j-j_0| \le J} |\langle u_i,A \overline{u}_j \rangle|^2\,.
\end{aligned}
\end{equation}

We will omit the dependence of $\Xi_A$ on $J$ when the context is clear.
\end{definition}

In contrast to the paper \cite{cipolloni-erdos-schroder-2021}, in which the authors could derive a self-consistent equation consisting of only one matrix, we have the relate the quantities $\Xi_M$ of different families of matrices to each other. We now introduce the following classes of deterministic matrices of interest.
\begin{align}
  & \mathbb{M}_0 := \{ N\dg \tilde{S}_\mu \}_{1 \le \mu \le N}\,, \quad  \mathbb{M}_1 := \{I, M\} \cup  \mathbb{M}_0\,,\\
&    \mathbb{M}_k := \left\{ B_1B_2 : B_1,B_2 \in \mathbb{M}_{k-1} \cup \mathbb{M}_{k-1}^\circ , 1\le \mu\le N \right\} ~~\text{for $k \ge 2$,}\\
&\mathbb{M}_k^\circ := \{B - \Ex{B} : B \in \mathbb{M}_k \},\\
   & \Lambda_k := \max_{B \in \mathbb M_k^\circ}\Xi_B+ \max_{B \in \mathbb M_k}\overline{\Xi}_B + 1\,.
\end{align}

The bound in the following lemma is a simple consequence of our definitions.
\begin{lemma}
$\sup_{B \in {\mathbb M_k \cup \mathbb M_k^\circ}}\|B\|_{l^2 \to l^2} \leq (\sup_{B \in {\mathbb M_1 \cup \mathbb M_1^\circ}}\|B\|_{l^2 \to l^2})^{2^k}$.
\end{lemma}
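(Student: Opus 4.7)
The plan is to prove this by a straightforward induction on $k$; write $M_k := \sup_{B \in \mathbb{M}_k \cup \mathbb{M}_k^\circ}\|B\|_{l^2 \to l^2}$ for brevity, so the claim becomes $M_k \leq M_1^{2^k}$. For the base case $k = 1$, the inequality reduces to $M_1 \leq M_1^2$, which holds because $I \in \mathbb{M}_1$ forces $M_1 \geq 1$.

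For the inductive step, I would take any $B \in \mathbb{M}_k$ and use the defining factorization $B = B_1 B_2$ with $B_1, B_2 \in \mathbb{M}_{k-1} \cup \mathbb{M}_{k-1}^\circ$. Submultiplicativity of the $l^2 \to l^2$ operator norm together with the inductive hypothesis then gives
\begin{equation*}
\|B\|_{l^2 \to l^2} \leq \|B_1\|_{l^2 \to l^2}\,\|B_2\|_{l^2 \to l^2} \leq M_{k-1}^2 \leq \bigl(M_1^{2^{k-1}}\bigr)^2 = M_1^{2^k}.
\end{equation*}
To handle an element $B - \Ex{B}I \in \mathbb{M}_k^\circ$, I would use the elementary bound $|\Ex{B}| = N^{-1}|\mathrm{tr}\, B| \leq \|B\|_{l^2 \to l^2}$ (each diagonal entry in any orthonormal basis is dominated by the operator norm), and combine it with the triangle inequality to get $\|B - \Ex{B}I\|_{l^2 \to l^2} \leq 2\|B\|_{l^2 \to l^2}$.

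The only mild nuisance is the factor of $2$ introduced by the centering step $B \mapsto B - \Ex{B}I$ at each level of the induction. Since the lemma is invoked downstream only to produce $N$-independent bounds on the deterministic matrices in $\mathbb{M}_k$, this universal constant is harmless and can be absorbed by slightly enlarging the base constant (or equivalently, by replacing $M_1$ with $\max(M_1, 2)$, which by $M_1 \geq 1$ costs only a bounded multiplicative factor). No real obstacle arises.
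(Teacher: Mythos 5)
The paper asserts this lemma without proof (``a simple consequence of our definitions''), so there is no paper argument to compare against; your induction via submultiplicativity of the operator norm is clearly the intended route, and your bound $|\Ex{B}| \leq \|B\|$ for the centering step is correct. The one real subtlety --- which you flag but are a bit too quick to dismiss --- is the extra factor of $2$ from centering. Writing $M_k := \sup_{B \in \mathbb{M}_k \cup \mathbb{M}_k^\circ}\|B\|$, the centering estimate gives $M_k \leq 2M_{k-1}^2$, which unwinds to $M_k \leq 2^{2^{k-1}-1}M_1^{2^{k-1}}$. This is dominated by the claimed $M_1^{2^k}$ only when $M_1 \geq 2^{1 - 2^{1-k}}$, i.e.\ eventually $M_1 \geq 2$. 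The definitions only force $M_1 \geq 1$ (from $I \in \mathbb{M}_1$); in the Wigner limit $\tilde{S}_{ij} = 1/N$ one has $M_1 = 1$, the stated bound would then read $M_k \leq 1$, and your induction does not deliver it. So strictly speaking you have not proved the inequality as written, and it is not clear the paper did either. Your assessment that this is harmless is nonetheless right: the lemma is invoked only to guarantee that every matrix in $\mathbb{M}_k \cup \mathbb{M}_k^\circ$ has an $N$-independent operator norm for each fixed $k$, and the weaker bound $M_k \leq \max(M_1,2)^{2^k}$ (which your argument does prove) serves identically. I would either record that corrected form, or simply state the qualitative conclusion $M_k \lesssim_k 1$, which is all the paper ever uses downstream.
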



Our main result Theorem \ref{thm:EigenstateThermHyp} is an easy corollary of the following result on the size of the control parameters $\Xi_M$ and $\overline{\Xi}_M$.
\begin{theorem} \label{thm:Xicontrolparam}
Fix $1 > \epsilon > 0$ and $J \ge N^{\epsilon}. $Let $W$ be our generalized Wigner matrix as before and let $M$ be a trace-less Hermitian matrix with bounded norm $||M||\le 1$. Then, we have the following estimates
\begin{equation}
    \Xi_M(J), \overline{\Xi}_M(J) \prec 1.
\end{equation}

\end{theorem}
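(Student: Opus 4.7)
The plan is to reduce the theorem to a bound on averaged multi-resolvent traces $\Lambda_A := \langle G(z_1) A G(z_2) A^\ast \rangle$ at spectral parameters with $\mathrm{Im}\, z_k \sim J/N$, and then to establish such bounds by deriving and iterating a closed system of inequalities across the families $\mathbb{M}_k$ and $\mathbb{M}_k^\circ$. First I would relate $\Xi_M(J)$ and $\overline{\Xi}_M(J)$ to these averaged traces: writing $\mathrm{Im}\, G(E + \mathrm{i}\eta) = \sum_i \eta |u_i\rangle\langle u_i|/((\lambda_i-E)^2 + \eta^2)$ and choosing $\eta = J/N$ turns a sum of $|\langle u_i, M u_j\rangle|^2$ over $|i-i_0|,|j-j_0|\le J$ into (up to a harmless factor coming from $E$ being near the classical location of $\lambda_{i_0}$) a quantity essentially equal to $\eta^2 \langle \mathrm{Im}\, G(z_1)\, M\, \mathrm{Im}\, G(z_2)\, M\rangle$. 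The eigenvalue rigidity from Theorem \ref{thm:lowcallaw} justifies dropping the indicators. Thus bounds of the form $\Lambda_A \prec 1/(N\eta)$ translate to $\Xi_A, \overline{\Xi}_A \prec 1$.

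Next I would derive the self-consistent equation for $\Lambda_A$ via the cumulant (or Gaussian integration by parts) expansion applied to $\langle G A G A^\ast\rangle$. The second-order cumulant produces the standard linear operator in $\Lambda_A$ (whose inversion is controlled by the $(I - m(z_1) m(z_2) \mathcal{C})^{-1}$ factor appearing in \eqref{eq:traceGenWig}), a Ward-type term from diagonal cumulants, plus the problematic contribution
$$ \frac{1}{N}\sum_{i,j} S_{ij}\, (GAG)_{jj}\,(G A^\ast)_{ii}. $$
Following the key algebraic step already outlined in the introduction, I use $S_{ij} = \sum_\mu \tilde S_{i\mu}\tilde S_{j\mu}$ to rewrite this as $\frac{1}{N}\sum_\mu \langle GAG \cdot N\dg\tilde S_\mu\rangle \langle GA^\ast \cdot N\dg\tilde S_\mu\rangle$, i.e., as a $\mu$-average of products of multi-resolvent traces whose ``observables'' lie in $\mathbb{M}_0$, or, after splitting off their normalized traces, in $\mathbb{M}_0$ and $\mathbb{M}_0^\circ$. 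Iterating this expansion on the new observables produces exactly the inductive hierarchy $\mathbb{M}_k$, $\mathbb{M}_k^\circ$, and yields, at each level, an inequality of the schematic form
$$ \Lambda_A \;\prec\; \frac{1}{N\eta} + \Psi\,\Lambda_{k-1}^{1/2}\,\Lambda_k^{1/2} + \text{(higher-order terms)} $$
for $A \in \mathbb{M}_k^\circ$, where $\Psi$ packages resolvent prefactors controlled by \eqref{eq:stronglocallaw}.

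The essential difficulty, and the step I would spend the most care on, is closing this hierarchy: the naive bound $\|B\|\le \|B_1\|\|B_2\|$ loses factors exponentially in $k$ and the bootstrap does not close. The remedy, as flagged in the introduction and executed in Sections \ref{sec:diagM} and \ref{sec:generalM}, is to treat diagonal observables first. For $B \in \mathbb{M}_0$ the entries are $\Theta(1)$ diagonal matrices, and one has an improved isotropic/averaged local law for $(G\cdot \dg v \cdot G)_{ii}$ that saves an extra factor of $1/\sqrt{N\eta}$ per resolvent. I would use Assumption \ref{asmp:squareroot} to ensure the $N\dg\tilde S_\mu$ are uniformly bounded and that Ward identities give $\sum_\mu \|N\dg\tilde S_\mu\|^2 \lesssim 1$ on average, and then run the bootstrap inside the diagonal subfamily to obtain $\Lambda_k \prec 1$ for all $k$ when all observables are diagonal, uniformly in $J\ge N^\epsilon$.

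Finally I would feed the diagonal bounds back as deterministic inputs into the full inequality for $A \in \mathbb{M}_k^\circ$ containing copies of $M$. Because the problematic cubic self-term from the $S_{ij}$ expansion only recombines $M$ with already-controlled diagonal factors, the resulting inequality for the $M$-containing $\Lambda$ becomes genuinely sub-quadratic (or closes via a standard $\Lambda \prec 1 + \Lambda^{1/2}\Psi$ fixed-point argument using the smallness of $1/(N\eta) = 1/J \le N^{-\epsilon}$). A standard stochastic-domination bootstrap starting from the a priori single-resolvent local law and iterating the improvement until the exponent stabilizes then yields $\Xi_M(J), \overline{\Xi}_M(J) \prec 1$. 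Finally, $\Xi_M(1) \prec J^2 \Xi_M(J) \prec N^{2\epsilon}$ combined with the freedom to choose $\epsilon$ arbitrarily small recovers the entrywise bound of Theorem \ref{thm:EigenstateThermHyp}.
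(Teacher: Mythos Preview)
Your overall architecture --- reduce $\Xi_M$ to $\langle \Im G_1 A \Im G_2 A^*\rangle/(\rho_1\rho_2)$, split $S=\tilde S^2$ to write the problematic term as a $\mu$-average of products of traces, handle diagonal observables first, then feed those bounds into the non-diagonal case --- is exactly the paper's. But two concrete steps are missing or wrong.

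First, the hierarchy goes \emph{upward} in $k$, not downward: after the $\tilde S$-splitting, the new observable $A\cdot N\dg\tilde S_\mu$ lies in $\mathbb{M}_{k+1}$, so the inequalities you obtain have the form $\Lambda_k^2\prec 1+\Lambda_1\Lambda_{k+1}^2/\sqrt{J}$ in the diagonal case and $\Lambda_k^2\prec 1+\Lambda_{k+4}^2/\sqrt{J}$ in the general case (Lemmas \ref{lem:GAGAdiag}, \ref{GAGtA}, \ref{lem:GAGAgeneral}, \ref{lem:ImGAImGtAgeneral}). Closing an upward hierarchy is done by iterating $T\sim 1/\epsilon$ times until $J^{-T/2}$ beats the trivial bound $\Lambda_{k+cT}\le\eta_*^{-1}$; your inequality with $\Lambda_{k-1}$ on the right would close trivially and is not what the expansion actually gives.

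Second, and this is the real gap, the diagonal recursion carries a multiplicative factor of $\Lambda_1$ itself (coming from the bound $|\langle G\,N\dg\tilde S_\mu^\circ\rangle|\prec\rho\Lambda_1/\sqrt{NL}$ before $\Lambda_1\prec 1$ is known). Iterating therefore yields only $\Lambda_1^2\lesssim 1+\sum_{t}(\Lambda_1^2)^t J^{-t/2}+(\Lambda_1^2)^{T+1}J^{-(T+1)/2}\eta_*^{-2}$, which implies merely the dichotomy ``$\Lambda_1^2=O(1)$ or $\Lambda_1^2\ge cN^{\epsilon/4}$''. The paper resolves this in Section~3.5 by a continuity argument in $J$: one shows $|\partial_J\Lambda_1(J)|\le N^C$, starts at $J=N$ where the bound is trivial, and steps down in increments of $N^{-C-2}$, ruling out the large branch at each step. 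A ``standard stochastic-domination bootstrap'' or a fixed-point argument $\Lambda\prec 1+\Psi\Lambda^{1/2}$ does not substitute for this, because the smallness of the coefficient is precisely what you are trying to prove. (Once the diagonal case is settled, the general case needs no continuity: the $\Lambda_1$ factor disappears and the upward iteration closes directly.) Minor issues: $\sum_\mu\|N\dg\tilde S_\mu\|^2$ is $\Theta(N)$, not $O(1)$; and the correct reduction is $\Xi_A\asymp\langle\Im G_1 A\Im G_2 A^*\rangle/(\rho_1\rho_2)$, not $\Lambda_A\prec 1/(N\eta)$.
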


\begin{proof}[Proof of Theorem \ref{thm:EigenstateThermHyp}]

From Theorem \ref{thm:Xicontrolparam}, we know that $\Xi_{A} \prec 1$. From this fact then necessarily, for any $i,j$, we must have that $|\langle u_i, A u_j \rangle|^2 \prec \frac{(2J)^2}{N} $. By taking square roots of both sides, we are done.
\end{proof}

Furthermore, the function $\Xi_A$ can be related to more standard functions of the resolvent of our Wigner matrix; $ G(z):=(W-z)^{-1}$. In what appears later, if we are considering a matrix product, then we let $\langle \cdot \rangle$ denote the normalized trace of the matrix under consideration.

For example, consider the following expression with $z_i= E_i + \ti \eta_i$:
\begin{equation} \label{eq:prodImG}
    \Ex{\Im G(z_1) A \Im G(z_2) A^*} = \frac{1}{N} \sum_{i,j} \frac{|\langle u_i, A u_j \rangle|^2 \eta_1\eta_2}{((\lambda_i - E_1)^2 + \eta_1^2)((\lambda_j -E_2)^2 + \eta_2^2)} .
\end{equation}

The following lemma explicitly writes out the relations between $\Xi_{A}$ and the quantity presented in equation \eqref{eq:prodImG}. 
\begin{lemma}
\label{GAGA&Xi}
{ Fix $E_1 = \gamma_{i_0}$, $E_2 = \gamma_{j_0}$ and $J \ge N^{\epsilon}$, where the $\gamma_{i}'s$ represent the classical eigenvalue locations of the $i$th eigenvalue. Choose $\eta_1$ and $\eta_2$ so that the following equation holds $J= N \eta_i \rho_i$, where $\rho_i = \Im m(E_i + \ti \eta_i)$. Then, we have the following claim,}
\begin{equation}
\begin{aligned}
    &\frac{N}{(2J)^2} \sum_{\substack{|i-i_0| \le J \\ |j-j_0| \le J}} \left|\langle u_i,A u_j \rangle\right|^2 \prec \frac{\Ex{\Im G(z_1) A \Im G(z_2) A^*}}{\rho_1 \rho_2}  \prec\frac{N}{(2J)^2} \sum_{\substack{|i-i_0| \le J \\ |j-j_0| \le J}} \left|\langle u_i,A u_j \rangle\right|^2,\\
    &\frac{N}{(2J)^2} \sum_{\substack{|i-i_0| \le J \\ |j-j_0| \le J}} \left|\langle u_i,A \overline{u}_j \rangle\right|^2 \prec \frac{\Ex{\Im G(z_1) A \Im G^t(z_2) A^*}}{\rho_1 \rho_2}  \prec \frac{N}{(2J)^2}\sum_{\substack{|i-i_0| \le J \\ |j-j_0| \le J}} \left|\langle u_i,A \overline{u}_j \rangle\right|^2.
\end{aligned}
\end{equation}
\end{lemma}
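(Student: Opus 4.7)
The starting point is the spectral resolution $\Im G(z) = \sum_k P_z(\lambda_k) u_k u_k^*$ with Poisson kernel $P_z(\lambda) := \eta/((\lambda-E)^2 + \eta^2)$ for $z = E + \ti\eta$, which gives
\begin{equation*}
\Ex{\Im G(z_1)\, A\, \Im G(z_2)\, A^*} = \frac{1}{N}\sum_{i,j} P_{z_1}(\lambda_i) P_{z_2}(\lambda_j)\, |\langle u_i, A u_j\rangle|^2.
\end{equation*}
A short calculation using $u_j^T A^* u_i = \overline{\langle u_i, A\overline{u_j}\rangle}$ shows that the analogous identity for $\Im G(z_2)^t$ produces $|\langle u_i, A\overline{u_j}\rangle|^2$ in place of $|\langle u_i, A u_j\rangle|^2$, so both halves of the lemma reduce to the same Poisson-weighted sum estimate and I will describe the plan only for the first.

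For the \emph{lower inequality}, I would restrict the double sum to $|i-i_0|\le J$ and $|j-j_0|\le J$ and discard the remaining nonnegative terms. By rigidity \eqref{eq:rig} and the defining relation $J = N\eta_a\rho_a$, for $|i-i_0|\le J$ the classical gap $|\gamma_i - \gamma_{i_0}|$ is of order $J/(N\rho_1) = \eta_1$, and the rigidity fluctuation is of strictly lower order; hence $|\lambda_i - E_1| \prec \eta_1$ and $P_{z_1}(\lambda_i) \gtrsim 1/\eta_1 = N\rho_1/J$, and similarly for $P_{z_2}(\lambda_j)$. Plugging these pointwise lower bounds into the spectral formula and dividing by $\rho_1\rho_2$ delivers the left inequality.

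For the \emph{upper inequality}, I would dyadically decompose the eigenvalues. Set $\mathcal W^{(1)}_0 := \{i : |\lambda_i - E_1| \le \eta_1\}$ and $\mathcal W^{(1)}_{k_1} := \{i : 2^{k_1-1}\eta_1 < |\lambda_i - E_1|\le 2^{k_1}\eta_1\}$ for $k_1\ge 1$, and analogously $\mathcal W^{(2)}_{k_2}$ around $E_2$. On $\mathcal W^{(1)}_{k_1}$ the kernel satisfies $P_{z_1}(\lambda_i)\lesssim 2^{-2k_1}/\eta_1$, and rigidity places $\mathcal W^{(1)}_{k_1}$ inside an index interval of length $\lesssim 2^{k_1} J$ around $i_0$ (up to $N^\epsilon$), which is covered by $O(2^{k_1})$ translates of the fundamental length-$J$ window. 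Combining in both variables and using the max-over-$(i_0',j_0')$ definition of $\Xi_A(J)$ to control each $J\times J$ sub-window gives
\begin{equation*}
\sum_{i\in\mathcal W^{(1)}_{k_1},\, j\in\mathcal W^{(2)}_{k_2}}|\langle u_i, A u_j\rangle|^2 \prec 2^{k_1+k_2}\,\frac{(2J)^2}{N}\,\Xi_A(J).
\end{equation*}
Substituting back and using $\eta_a = J/(N\rho_a)$, the shell $(k_1,k_2)$ contributes of order $2^{-(k_1+k_2)}\rho_1\rho_2 \Xi_A(J)$; summing the geometric series yields $\Ex{\Im G(z_1) A \Im G(z_2) A^*}/(\rho_1\rho_2) \prec \Xi_A(J)$, which is the right inequality.

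The main obstacle I expect is the geometric step in the upper bound, namely verifying uniformly in $k_1$ that rigidity \eqref{eq:rig} converts an eigenvalue shell of radius $2^{k_1}\eta_1$ around $E_1$ into an index window of length $\lesssim 2^{k_1} J$ around $i_0$. In the bulk this follows from a Taylor expansion of the quantile map $i \mapsto \gamma_i$ combined with $\rho$ being bounded below; near the edge the density degenerates like a square root, and one must instead use the sharper edge form of \eqref{eq:rig} together with the natural cutoff $2^{k_1}\eta_1 \lesssim 1$. Once this geometry is in place, the rest of the argument is routine Poisson-kernel arithmetic.
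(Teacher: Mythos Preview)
Your proposal is correct and is precisely the standard rigidity--plus--Poisson-kernel argument that the paper itself defers to by citing \cite[Lemma~3.2]{cipolloni-erdos-schroder-2021}; the paper gives no further details beyond invoking \eqref{eq:rig}. One small remark: as you implicitly noticed, your upper bound is naturally to $\Xi_A(J)$ (the maximum over all windows) rather than to the single fixed-$(i_0,j_0)$ sum written on the right of the display, and this is also how the lemma is actually used downstream in the continuity argument.
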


\begin{proof}
This is a consequence of eigenvalue rigidity \eqref{eq:rig} for generalized Wigner matrices. See \cite[Lemma 3.2]{cipolloni-erdos-schroder-2021}.
\end{proof}






Our basic tool for deriving a self consistent equation for quantities of the form appearing in equation \eqref{eq:prodImG} is integration by parts. One of our main error terms produced by this integration by parts procedure is the following renormalized term. 

\begin{definition}[Renormalized Matrix Products] \label{defn:Renorm}
Given a matrix product of the from $f(W) W g(W)$, we can define the renormalized matrix product $\underline{f(W) W g(W)}$ as,
\begin{equation}\label{eq:renorm}
    \underline{f(W) W g(W)} := f(W) W g(W) - \mathbb{E}_{\tilde{W}} (\partial_{\tilde{W}}f)(W) \tilde{W} g(W) - \mathbb{E}_{\tilde{W}} f(W) \tilde{W} (\partial_{\tilde{W}} g)(W).
\end{equation}

The derivative $\partial_{\tilde{W}}f= \sum_{i,j} \tilde{W}_{ij} \partial_{ij}f$, where $\partial_{ij}f$ is the standard partial derivative of $f$ with respect to the $ij$th matrix entry and $\tilde W$ is an independent copy of $W$.
\end{definition}

\begin{remark}
    The terms subtracted in \eqref{eq:renorm} are the first order terms in the integration by parts of $f(W)Wg(W)$ with respect to the middle $W$ in the product.
\end{remark}

Our final main lemma computes the size of the renormalized term for our relevant quantities of interest.

\begin{lemma}\label{lem:underline}
Let $W$ be a generalized Wigner matrix satisfying the conditions lined out in Assumption \ref{asmp:squareroot}. Suppose for $i \in \{1,2\}$ $z_i  \in \mathbb{C}\setminus \mathbb{R}$, $\eta_i = \left|\Im z_i\right|$, $\rho_i = \Im m_i$, $L = \min{\left|N\eta_i\rho_i\right|}$, $\eta_* = \min(\eta_1,\eta_2)$. Then, we have the following estimates.



For $G_i \in \{G(z_i), G^*(z_i), G^t(z_i), \Im G(z_i) \}$ and $A \in \mathbb{M}_k^\circ$
\begin{equation}
    \left|\Ex{\underline{WG_iA}}\right| \prec \frac{\rho_i \Lambda_k}{\sqrt{NL}}\,.
\end{equation}
For $G_i \in \{G(z_i), G^*(z_i), G^t(z_i)\}$ and $A \in \mathbb{M}_k^\circ$
\begin{equation}
\begin{aligned}
    \left|\Ex{\underline{WG_1 G_2 A}}\right| &\prec \frac{\Lambda_k}{L\sqrt{\eta_*}}\,,\quad  \left|\Ex{\underline{W G_1 \Im G_2 A}}\right| \prec \frac{\rho_2\Lambda_k}{L\sqrt{\eta_*}}\,, \\
    \left|\Ex{\underline{W\Im G_1 G_2 A}}\right| &\prec \frac{\rho_1\Lambda_k}{L\sqrt{\eta_*}}\,,\quad \left|\Ex{\underline{W\Im G_1 \Im G_2 A}}\right| \prec \frac{\rho_1\rho_2\Lambda_k}{L\sqrt{\eta_*}}\,.
\end{aligned}
\end{equation}
For $G_i \in \{G(z_i), G^*(z_i), G^t(z_i)\}$, $A_1 \in \mathbb{M}_k^\circ$ and $A_2 \in \mathbb{M}_l^\circ$
\begin{equation}
\begin{aligned}
    |\Ex{\underline{W G_1 A_1 G_2 A_2}}| &\prec \frac{\Lambda_k\Lambda_l}{\sqrt{L}}\,,\quad
    |\Ex{\underline{W G_1 A_1 \Im G_2 A_2}}| \prec \frac{\rho_2\Lambda_k\Lambda_l}{\sqrt{L}}\,,\\
        |\Ex{\underline{W \Im G_1 A_1 G_2 A_2}}| &\prec \frac{\rho_1\Lambda_k\Lambda_l}{\sqrt{L}}\,,\quad
    |\Ex{\underline{W \Im G_1 A_1 \Im G_2 A_2}}| \prec \frac{\rho_1\rho_2\Lambda_k\Lambda_l}{\sqrt{L}}\,.
\end{aligned}
\end{equation}
\end{lemma}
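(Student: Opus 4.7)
The plan is a high-moment plus cumulant expansion, following the scheme of \cite{cipolloni-erdos-schroder-2021} but with extra bookkeeping to accommodate the variance profile $S_{ij}$. Let $X$ denote any of the renormalized products appearing in the lemma. For large integer $p$, Markov reduces the task to bounding the $2p$-th moment of $\Ex{X}$, which after full expansion takes the form
\[
\frac{1}{N^{2p}}\,\E\prod_{q=1}^{2p}\tr X^{(q)},
\]
with appropriate complex conjugations on the $X^{(q)}$. In each copy we expose the explicit middle factor $W_{ab}$ and apply a cumulant expansion of order $\ell \gg p$. The cancellation built into Definition \ref{defn:Renorm} removes precisely the second-cumulant contribution in which $\partial_{W_{ab}}$ acts on the $f$ or $g$ of the same copy, so only two families of terms survive: (i) second-cumulant terms pairing $W_{ab}$ across copies and hitting $G$-entries in one of the remaining $2p-1$ traces, and (ii) cumulants of order $\ge 3$, each carrying an extra gain of at least $N^{-1/2}$ from the cumulant size alone.

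The structural heart of the argument is the treatment of each surviving cross-copy second-cumulant sum of the form $\sum_{a,b} S_{ab}(C_1)_{ab}(C_2)_{ba}$, where $C_1,C_2$ are chains of Green's functions and deterministic matrices. Using the square-root decomposition of Assumption \ref{asmp:squareroot},
\[
\sum_{a,b} S_{ab}(C_1)_{ab}(C_2)_{ba} = \sum_{\mu=1}^N \Ex{(N\dg\tilde S_\mu)\,C_1\,(N\dg\tilde S_\mu)\,C_2}\,.
\]
Splitting each inserted $N\dg\tilde S_\mu$ and each product $A\cdot N\dg\tilde S_\mu$ into its average plus traceless remainder, the traceless remainders lie by construction in $\mathbb M_{k'}^\circ$ for some $k'\le k+l+1$, so Lemma \ref{GAGA&Xi} converts the resulting multi-resolvent traces into bounds of the form $\rho_1\rho_2\Lambda_{k'}$; the averaged pieces are controlled directly by the local law, Theorem \ref{thm:lowcallaw}. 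Ward-type identities such as $\sum_b |G_{ab}|^2 = \Im G_{aa}/\eta$ account for the $\eta_*^{-1}$ and $L^{-1}$ factors: in the single-$G$ case only the local law is needed and yields $1/\sqrt{NL}$; in the $G_1G_2$ case an internal Ward pairing of an adjacent $G$-pair gives an $\Im G/\eta_*$, producing the $1/(L\sqrt{\eta_*})$ bound; and in the $G_1A_1G_2A_2$ case both traceless $A_i$ factors each contribute a $\sqrt{\Lambda_{k_i}}$, giving $\Lambda_k\Lambda_l/\sqrt{L}$. The $\rho_i$ prefactors in the statement simply track the number of $\Im G_i$ factors in the original expression.

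The higher-cumulant terms in (ii) are treated by the same power counting: for our generalized Wigner ensemble the $r$-th cumulant of $W_{ab}$ is $O(N^{-r/2})$, and each additional derivative either produces a bounded diagonal $G_{ii}$ or an off-diagonal $G_{ij}\prec \sqrt{\Im m/(N\eta)}$ via the local law, so each additional cumulant order is uniformly dominated by a factor $\ll 1$. Thus (i) controls (ii) for all $p$. The main obstacle, and the only place where the variance profile genuinely obstructs a direct Wigner-style argument, is the verification that every cross-copy contribution in (i), after the $\tilde S$ rewriting and traceless decomposition, lands inside a class $\mathbb M_{k'}^\circ$ whose $\Xi$ and $\overline\Xi$ are already under the inductive control of $\Lambda_{k'}$. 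The hierarchical closure of the families $\mathbb M_k$ under products and traceless projection is designed precisely so that this bookkeeping closes, and the operator-norm growth bound $\sup_{B\in\mathbb M_k}\|B\|\le C^{2^k}$ stated after the definition of $\Lambda_k$ guarantees that no intermediate step loses a factor of $N$. Once this closure is confirmed, the lemma follows by optimizing $p$ in the $\prec$-notation.
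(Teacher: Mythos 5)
Your proposal follows essentially the same route as the paper: high moments via Markov, cumulant expansion, the square-root decomposition $S_{ab}=\sum_\mu \tilde S_{a\mu}\tilde S_{\mu b}$, traceless splitting into the $\mathbb M_k^\circ$ hierarchy, and then falling back on the Wigner-type diagram estimate from \cite{cipolloni-erdos-schroder-2021}. The paper formalizes all of this through its graph formalism (Definition \ref{regular}, Lemmas \ref{lem:cumulant}, \ref{lem:value}, \ref{lem:finalsplit}), but the underlying strategy is the one you describe.

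Two points where your sketch is incomplete, and both matter. First, a normalization slip: with $\Ex{M}=\frac1N\tr M$, the correct identity is
\begin{equation*}
\sum_{a,b} S_{ab}(C_1)_{ab}(C_2)_{ba} \;=\; \frac1N\sum_{\mu=1}^N \Ex{(N\dg\tilde S_\mu)\,C_1\,(N\dg\tilde S_\mu)\,C_2},
\end{equation*}
i.e.\ your displayed formula is off by a factor $N$, and this is precisely the scale that the power counting is fighting over, so it cannot be waved away. Second, and more substantively, you say that ``each inserted $N\dg\tilde S_\mu$'' and ``each product $A\cdot N\dg\tilde S_\mu$'' is split into average plus traceless part, but this is not what makes the argument close. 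The paper only converts \emph{one} of the two $N\dg\tilde S_\mu$ factors per $\kappa^{1,1}$-edge into its traceless version $N\dg\tilde S^\circ_\mu$, the other stays as is, and the one that is made traceless must be inserted at a vertex sitting between two pure $G$'s with $L=R=I$; Lemma \ref{lem:pureG} guarantees such a vertex exists for $h(e)=\mathrm{res}$ edges (for $h(e)=\mathrm{mat}$ edges the choice is free). This is exactly what preserves the orthogonality-vertex count and the $(l,p,i,\mathfrak a,\mathfrak t)$-regularity of the split diagrams, so that Lemma \ref{lem:value} applies. Your proposal also omits the ``ext'' remainder term (the $\tilde\Gamma^{\ext}$ piece in Lemma \ref{lem:finalsplit}, corresponding to replacing $\kappa^{1,1}$ by $1/N$ outright) which arises from the split $\tilde S_{x\mu}=\tilde S^\circ_{x\mu}+\frac1N$. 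Without these two pieces of bookkeeping the argument does not actually reduce to the estimate of \cite{cipolloni-erdos-schroder-2021}; with them it does, and that is what Section 5 of the paper carries out.
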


\section{Proof of Theorem \ref{thm:Xicontrolparam} for $M$ diagonal}
\label{sec:diagM}

To prove Theorem \ref{thm:Xicontrolparam}, we need the bounds on  expressions of the form $\Ex{\Im G A \Im G A^*}$ and $\Ex{\Im G A \Im G^t A^*}$ in terms of $\Lambda_k$. To do this, we first have to study simpler expressions like $\Ex{GA}$, $\Ex{GGA}$, $\Ex{GAGA}$, etc.

Throughout Sections \ref{sec:diagM} and \ref{sec:generalM} we use the following notation. Let $z_i \in \mathbb{C} \setminus \R$, $G_i\in \{G_i(z), G^*_i(z)\}$, $\eta_i = \left|\Im z_i\right|$, $\rho_i = \Im m_i$, $L = \min{\left|N\eta_i\rho_i\right|}$, $\eta_* = \min \eta_i$. In the case that the studied expression has a single resolvent, we omit the index $i$.  

In this section we assume that $M$ is diagonal and, thus, all matrices in the families $\mathbb{M}_k$ and $\mathbb{M}_k^\circ$ are diagonal.

\subsection{Bounds on $\Ex{GA}$}
\begin{lemma} \label{lem:bndGA}
Let $A \in \mathbb M_k^\circ$. Then, we have that,
\begin{equation}
    \left|\Ex{GA}\right| \prec \frac{\sqrt{\rho}\Lambda_k}{N\sqrt{\eta}} = \frac{\rho\Lambda_k}{\sqrt{NL}},
\end{equation}
and, therefore
\begin{equation}
    \left|\Ex{\Im GA}\right| \prec \frac{\sqrt{\rho}\Lambda_k}{N\sqrt{\eta}} = \frac{\rho\Lambda_k}{\sqrt{NL}}.
\end{equation}

\end{lemma}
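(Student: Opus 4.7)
The strategy is to derive a self-consistent equation for $\Ex{GA}$ by applying the renormalization of Definition~\ref{defn:Renorm} to the resolvent identity $WG=I+zG$ and then to close it using the square-root structure of the covariance from Assumption~\ref{asmp:squareroot}. Taking $\Ex{\cdot A}$ of $WG=I+zG$ gives $\Ex{WGA}=z\Ex{GA}$ since $\Ex{A}=0$. Writing $WGA=\underline{WGA}-\mathcal{S}[G]GA$ (which follows from $\partial_{\tilde W}G=-G\tilde W G$ together with the second-moment relation $\E_{\tilde W}[\tilde W X\tilde W]=\mathcal{S}[X]$, where $\mathcal{S}[X]_{ii}:=\sum_a S_{ia}X_{aa}$), splitting $\mathcal{S}[G]=mI+\mathcal{S}[G-mI]$, and using the semicircle identity $m(z+m)=-1$, the equation rearranges to
\begin{equation*}
    \Ex{GA}=-m\Ex{\underline{WGA}}+m\Ex{\mathcal{S}[G-mI]\,GA}.
\end{equation*}
Lemma~\ref{lem:underline} applied to $A\in\mathbb M_k^\circ$ bounds the first term by $\rho\Lambda_k/\sqrt{NL}$, which is exactly the target scale; all the work lies in the second term.

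For the correction $\Ex{\mathcal{S}[G-mI]\,GA}$, the square-root decomposition $S=\tilde S\tilde S$ converts the diagonal covariance into products of traces. With $B_\mu=N\dg\tilde S_\mu\in\mathbb M_0$, a direct reindexing yields
\begin{equation*}
    \Ex{\mathcal{S}[G-mI]\,GA}=\frac{1}{N}\sum_\mu \Ex{B_\mu(G-mI)}\,\Ex{B_\mu\, GA}.
\end{equation*}
The crucial cancellation is that the rows of $\tilde S$ sum to $1$, so $\sum_\mu B_\mu=NI$ and hence $\sum_\mu B_\mu^\circ=0$, where $B_\mu^\circ:=B_\mu-I\in\mathbb M_1^\circ$. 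Writing $B_\mu=I+B_\mu^\circ$ in both factors and summing, the two cross terms linear in a single $B_\mu^\circ$ drop out, leaving
\begin{equation*}
    \Ex{\mathcal{S}[G-mI]\,GA}=\Ex{G-mI}\Ex{GA}+\frac{1}{N}\sum_\mu \Ex{B_\mu^\circ G}\,\Ex{B_\mu^\circ GA}.
\end{equation*}

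The first resulting term couples $\Ex{GA}$ to itself with prefactor $|\Ex{G-mI}|\prec 1/(N\eta)\ll 1$ from the averaged local law of Theorem~\ref{thm:lowcallaw}, so it can be moved to the left-hand side and absorbed. The remaining quadratic sum is handled by Cauchy–Schwarz in $\mu$: the factor $(\sum_\mu|\Ex{B_\mu^\circ G}|^2)^{1/2}$ is bounded by $\sqrt N\cdot\rho\Lambda_1/\sqrt{NL}=\rho\Lambda_1/\sqrt L$ using the lemma itself at level $k=1$ (since $B_\mu^\circ\in\mathbb M_1^\circ$), while the other factor $(\sum_\mu|\Ex{B_\mu^\circ GA}|^2)^{1/2}$ is bounded by first expanding $G=mI+(G-mI)$, then using $\sum_\mu|\Ex{AB_\mu^\circ}|^2=\|\tilde S a\|^2\leq N\|A\|^2$ (where $a$ is the diagonal of $A$) together with fluctuation averaging on the remainder, giving $\mathcal O_\prec(\sqrt N)$. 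Combined with the $1/N$ prefactor these produce $\rho\Lambda_1/\sqrt{NL}\leq\rho\Lambda_k/\sqrt{NL}$ (using $\mathbb M_1\subset\mathbb M_k$, hence $\Lambda_1\leq\Lambda_k$). The $\Im G$ statement follows from $\Im G=(G-G^*)/(2\ti)$ and the triangle inequality. \textbf{Main obstacle:} the argument is inductive in $k$, and the base case $k=1$ is delicate because the Cauchy–Schwarz factor $(\sum_\mu|\Ex{B_\mu^\circ G}|^2)^{1/2}$ then contains exactly the quantity one is trying to control; closing this requires combining the self-consistent equation with the algebraic identity $\sum_\mu|\Ex{B_\mu^\circ G}|^2=\sum_{j,k}(S_{jk}-N^{-1})(G_{jj}-m)\overline{(G_{kk}-m)}$ into a strictly contractive inequality for $\max_{A\in\mathbb M_1^\circ}|\Ex{GA}|$, uniformly in $\eta\geq N^{-1+\epsilon}$.
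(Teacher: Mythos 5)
Your self-consistent equation is set up correctly, and the symmetrization trick (putting $\circ$ on \emph{both} factors so the cross terms vanish via $\sum_\mu B_\mu^\circ=0$) is a slightly cleaner decomposition than the one in the paper, which keeps $\dg\tilde S_\mu$ untouched on one side and only passes to $\dg\tilde S^\circ_\mu$ on the other (see \eqref{eq:longGA}). Your Cauchy--Schwarz treatment of the quadratic sum is also legitimate once the base-level bound is in hand. The trouble is exactly the one you flag in your last sentence: your argument is circular at the $\mathbb M_0$ level, since $(\sum_\mu|\Ex{B_\mu^\circ G}|^2)^{1/2}$ is precisely the quantity you are trying to bound, and you propose only a direction (``a strictly contractive inequality'') without carrying it out. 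That is a genuine gap, not a detail.

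The paper closes this gap with a different mechanism, and it is worth stating precisely. One specializes $A=N\dg\tilde S_\nu^\circ$ \emph{before} any Cauchy--Schwarz, obtaining a genuinely \emph{linear} system for the $N$-vector $v_\nu:=\Ex{GN\dg\tilde S_\nu^\circ}$:
\begin{equation*}
(I-C)\,v=-m\,\Ex{\underline{WGN\dg\tilde S^\circ}},\qquad C_{\nu\mu}=m\Bigl(S_{\nu\mu}-\tfrac1N\Bigr)+\mathcal O_\prec\Bigl(\tfrac{1}{N^{3/2}\eta^{1/2}}\Bigr)+\mathcal O_\prec\Bigl(\tfrac{1}{N\eta}\Bigr)\delta_{\nu\mu}.
\end{equation*}
This is solvable \emph{non-circularly}: Lemma~\ref{lem:matrix-inversion} shows $\|S-\frac1N\mathbf 1\mathbf 1^T\|\le 1-c$, and $|m|<1$, so $\|C\|\le 1-c+o(1)$ and $(I-C)^{-1}$ is bounded independently of $N,\eta$. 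That gives first an $\ell^2$-bound $\|v\|\prec\sqrt N\cdot\sqrt\rho\Lambda_1/\sqrt{NL}$, and then, re-inserting into a single row of the system and applying Cauchy--Schwarz against $C_\nu$ (which has $\ell^2$-norm $\mathcal O(1/\sqrt N)$), the \emph{entrywise} bound $|v_\nu|\prec\sqrt\rho\Lambda_1/\sqrt{NL}$. Only after the $\mathbb M_0$ level is established does one return to general $A\in\mathbb M_k^\circ$. Your algebraic identity $\sum_\mu|\Ex{B_\mu^\circ G}|^2=\sum_{j,k}(S_{jk}-\frac1N)(G_{jj}-m)\overline{(G_{kk}-m)}$ is correct and indeed reflects the same operator $S-\frac1N\mathbf 1\mathbf 1^T$, so you are circling the right object, but by itself it does not yield the needed smallness; you would essentially be forced to reproduce the $(I-C)$-inversion. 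The missing ingredient in your proposal is this spectral-gap/matrix-inversion step, and without it the base case of the induction cannot be closed.
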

\begin{proof}

First, we start with the following identity,
\begin{equation}\label{eq:G-to-WG-identity}
    G = mI - m WG - m^2G.
\end{equation}
Multiplying this by the matrix $A$, we get,
\begin{equation}
    GA = mA - m WGA - m^2GA.  
\end{equation}
We replace the term $WGA$ by the renormalization from Definition \ref{defn:Renorm} and derive,
\begin{equation}
      (GA)_{ik} = mA_{ik} - m (\underline{WGA})_{ik} + m  \sum_j S_{ij}(G_{jj} - m )(GA)_{ik}.
\end{equation}
Taking the trace of this expression, we have that, for traceless matrices $A$, that
\begin{equation} \label{eq:fundGA}
    \langle GA \rangle = - m \langle\underline{WGA} \rangle + m \frac{1}{N} \sum_{i,j} S_{ij}(G_{jj} -m) (GA)_{ii}.
\end{equation}
We introduce the splitting $S_{ij} = \sum_{\mu} \tilde{S}_{i \mu} \tilde{S}_{\mu j}$ on the last term and we further introduce the traceless part $\tilde{S}_{i \mu} = \tilde{S}^{\circ}_{i \mu} + \frac{1}{N}$.
\begin{equation} \label{eq:longGA}
\begin{aligned}
    \frac{1}{N} \sum_{i,j} S_{ij}(G_{jj} - m) (GA)_{ii} &= \frac{1}{N} \sum_{\mu} \sum_{i,j} \tilde{S}_{i \mu}  (GA)_{ii} \tilde{S}_{\mu j} (G_{jj}-m)\\
    &= \frac{1}{N} \sum_{\mu}\sum_{i,j} \tilde{S}_{i \mu} (GA)_{ii} \tilde{S}^{\circ}_{\mu j} (G_{jj} - m) + \frac{1}{N^2} \sum_{\mu} \sum_{i,j}\tilde{S}_{\mu i} (GA)_{ii}  (G_{jj} - m)\\
    &= \frac{1}{N} \sum_{\mu} \Ex{G A N \dg\tilde{S}_{ \mu}} \Ex{G N\dg\tilde{S}_{\mu}^\circ} + \Ex{GA} \Ex{G - mI}\\
    &= \frac{1}{N} m \sum_{\mu} \Ex{A N \dg\tilde{S}_{\mu}} \Ex{G N\dg\tilde{S}_{\mu}^\circ} \\
    &+  \frac{1}{N} \sum_{\mu} \Ex{(G - mI) A N \dg\tilde{S}_{\mu}} \Ex{G N\dg\tilde{S}_{\mu}^\circ} + \Ex{GA} \Ex{G- mI}.
\end{aligned}
\end{equation}
We will specialize $A = N\dg\tilde{S}^{\circ}_{\nu}$ to get a certain system of equations. First, we have
\begin{equation}
\begin{aligned}
    \frac{1}{N} \sum_{i,j} S_{ij}(G_{jj} - m) (G N\dg\tilde{S}^{\circ}_{\nu})_{ii}= \sum_{\mu} C_{\nu\mu} \langle G N \dg\tilde S^{\circ}_{\mu}\rangle,
\end{aligned}
\end{equation}
where the coefficients $C_{\nu\mu}$ are
\begin{equation}
\begin{aligned}
    C_{\nu\mu} &= \frac{m}{N} \Ex{N \dg\tilde{S}^{\circ}_{\nu}N \dg\tilde{S}_{\mu}} + \frac{1}{N} \Ex{(G- mI) N\dg\tilde{S}^{\circ}_{\nu} N \dg\tilde{S}_{\mu}} + \Ex{G- mI}\delta_{\nu\mu}  \\&= m \left[S_{\nu\mu} - \frac{1}{N} \right]  + \mathcal O_\prec\left(\frac{1}{N^{3/2} \eta^{1/2}}\right) + \delta_{\nu\mu}\mathcal O_\prec\left(\frac{1}{N\eta}\right).
\end{aligned}
\end{equation}
In the line above, we used the local law to bound the diagonal entries of $G- mI$ by $\frac{1}{\sqrt{N \eta}}$.  Furthermore, $N \dg\tilde{S}_{\mu}^\circ N \dg\tilde{S}_{\nu}$ is a diagonal matrix with $\mathcal O(1)$ entries. Thus, we see that,
\begin{equation}
\frac{1}{N} \langle (G- mI) N \dg\tilde{S}^{\circ}_{\nu} N\dg\tilde{S}_{\mu} \rangle=\frac{1}{N^2} \sum_{i=1}^N (G - mI)_{ii} [N\dg\tilde{S}^{\circ}_{\nu} N\dg\tilde{S}_{\mu}]_{ii} = \mathcal O_\prec\left(\frac{1}{N^{3/2} \eta^{1/2}} \right).
\end{equation}

Placing all of these estimates back into the equation \eqref{eq:fundGA} for specialized values of $A = N\dg\tilde{S}_{\nu}^\circ$, we have,
\begin{equation} \label{eq:somematrix}
    (I - C) \Ex{G N\dg\tilde{S}^{\circ}} = - m \Ex{\underline{W G N\dg\tilde{S}^{\circ}}}.
\end{equation}
Here, $\langle G \dg \tilde{S}^{\circ} \rangle$  and $\langle \underline{W G \dg\tilde{S}^{\circ} \rangle}$ is a shorthand for the column vector constructed using these terms for the matrix $\dg \tilde{S}^{\circ}_{\mu}$ for each $\mu$. 

This finally gives us,
\begin{equation}
    \Ex{G N\dg\tilde{S}^{\circ}} = -m (I - C)^{-1} \Ex{\underline{WG N\dg\tilde{S}^{\circ}}}.
\end{equation}

\begin{lemma}\label{lem:matrix-inversion}
Assume that we have $S_{ij} \ge \frac{c}{N}$ for all values i,j.
The largest eigenvalue of $S - \frac{1}{N}1^T 1$ in absolute value is bounded from above by $1 -c$ .
\end{lemma}
\begin{proof}
The matrix $S - \frac{1}{N} 1^T 1$ can be decomposed as follows,
\begin{equation}
    S - \frac{1}{N}1^T 1 = S_2 + \frac{c -1}{N} 1^T 1 ,
\end{equation}
where $1$ is the row vector with all entries equal to $1$.
All of the entries of $S_2$ are positive; furthermore, the sum over each row and each column is bounded by $1- c$. This shows that the $l_2 \to \l_2$ operator norm of the matrix $S_2$  is less than $1-c$. If we look at the orthogonal space to the vector $1$, we see that $\sup_{\langle v, 1 \rangle =0} |v \left[ S - \frac{1}{N} 1^T 1\right]  v^T| \le 1-c$.

Furthermore,the vector $1$ is an eigenvector of the matrix $S - \frac{1}{N} 1^T 1$ with eigenvalue $c-1$. Thus, the largest eigenvalue of $S $ is bounded by,
$$
\max\left(|c-1|,  \sup_{\langle v, 1 \rangle =0} v \left[ S - \frac{1}{N}1^T 1 \right] v^T\right) \le 1-c,
$$
as desired. 
\end{proof}

Because of the above lemma, along with the fact that $|m|<1$ (as the Stieltjes transform of the semicircle distribution), we know that the largest eigenvalue of $C$ is bounded from above in absolute value by $1-c$. Thus, the inverse $(I - C)^{-1} \langle \underline{WNG \dg\tilde{S}^{\circ}} \rangle$ is well-defined and bounded in $l_2$ vector norm by $\sqrt{N} \frac{\sqrt{\rho}\Lambda_1}{\sqrt{NL}}.$

Placing this estimate back into the equation for an individual row in \eqref{eq:somematrix}, we find that,
\begin{equation}
    \Ex{G  N \dg\tilde{S}_{\mu}^\circ} = \left\langle C_{\mu}, \Ex{G N \dg\tilde{S}^{\circ}} \right\rangle - m \Ex{\underline{WG N\dg\tilde{S}_{\mu}}}.
\end{equation}

$C_{\mu}$ is the $\mu$th row of the matrix $C$. 
Now, $C_{\mu}$ is bounded in $l_2$ norm by $\mathcal{O}\left(\frac{1}{\sqrt{N}}\right)$. By the Cauchy-Schwartz inequality $ \langle C_{\mu}, \langle GN \dg\tilde{S}^{\circ} \rangle \rangle $ can be bounded by $\prec \frac{\sqrt{\rho}\Lambda_1}{\sqrt{NL}}$.

This shows that the entries,
\begin{equation} \label{eq:GdiagS}
    |\langle G N\dg\tilde{S}_{\mu} \rangle| \prec \frac{\sqrt{\rho}\Lambda_1}{\sqrt{NL}}.
\end{equation}

At this point, we can return to an analysis for general matrices $A$. From the equation \eqref{eq:longGA}, we see that,
\begin{equation}
\begin{aligned}
    \langle GA \rangle [ 1- \langle G - mI \rangle] &= - m\langle{\underline{WGA}}\rangle + \frac{m^2}{N} \sum_{\mu} \langle A (N \dg(\tilde{S}^{\mu})) \rangle \langle G N \dg(\tilde{S}_{\mu}^{\circ})\rangle\\
    &+ \frac{1}{N} \sum_{\mu} \langle (G- mI) A (N \dg\tilde{S}_{\mu})\rangle \langle G N \dg(\tilde{S}_{\mu}^{\circ}) \rangle\\
    &\prec   \frac{ \sqrt{\rho} \Lambda_k }{\sqrt{NL}}  
\end{aligned}
\end{equation}
Our earlier estimates on $\langle G N \dg\tilde{S}^{\circ}_{\mu} \rangle$ as well as on $\langle \underline{WGA} \rangle$ ensure that the right hand side is $\prec \frac{\Lambda_k}{N \sqrt{\eta}}$. Here, we specifically used the fact that $A$ was a diagonal matrix, so that we can write,
$$
\Ex{(G- mI) A N \dg\tilde{S}_{\mu}} = \frac{1}{N} \sum_{i=1}^N (G - mI)_{ii} A_{ii} [N \dg\tilde{S}_{\mu}]_{ii}.
$$
All the terms $A_{ii}$ and $(N\dg\tilde{S}_{\mu})_{ii}$ are $\mathcal{O}(1)$.  Furthermore, $|G- mI|_{ii} = \mathcal{O}_{\prec} \left( \frac{1}{\sqrt{N\eta}}\right)$. Thus, the normalized trace considered above is $\mathcal{O}(1)$.

We can easily divide by $1 - \langle G - mI \rangle = 1 -  o(1)$ to derive the same error estimate for $\langle GA \rangle.$  
\end{proof}

\subsection{Bounds on $\Ex{G G A}$}

\begin{lemma}
\label{GGAdiag}
Let $A \in \mathbb M_k^\circ$. Then
\begin{equation}\label{eq:GGAdiag}
    |  \Ex{G_1G_2A} |\prec \frac{\Lambda_k }{L\sqrt{\eta_*}},
\end{equation}
\begin{equation}\label{eq:GImGAdiag}
    |\Ex{G_1 \Im G_2 A}| 
    \prec \frac{\rho_2\Lambda_k}{L\sqrt{\eta_*}},
\end{equation}
\begin{equation}\label{eq:ImGImGAdiag}
    |\Ex{\Im G_1 \Im G_2 A}| 
    \prec \frac{\rho_1\rho_2\Lambda_k}{L\sqrt{\eta_*}}.
\end{equation}
\end{lemma}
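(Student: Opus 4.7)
My plan is to mirror the proof of Lemma~\ref{lem:bndGA}, extending it from a single resolvent to two. Starting from the resolvent identity $G_1 = m_1 I - m_1^2 G_1 - m_1 W G_1$, multiply on the right by $G_2 A$ and take the normalized trace to obtain
\[
(1+m_1^2)\Ex{G_1 G_2 A} = m_1 \Ex{G_2 A} - m_1 \Ex{W G_1 G_2 A}.
\]
Next, apply the renormalization of Definition~\ref{defn:Renorm}: since $\partial_{ji}(G_1 G_2 A) = -G_1 E^{ji} G_1 G_2 A - G_1 G_2 E^{ji} G_2 A$, the cumulant expansion produces two correction terms,
\[
\Ex{W G_1 G_2 A} = \Ex{\underline{W G_1 G_2 A}} - \frac{1}{N}\sum_{ij} S_{ij}\bigl[(G_1)_{jj}(G_1 G_2 A)_{ii} + (G_1 G_2)_{jj}(G_2 A)_{ii}\bigr].
\]
The averaged local law replaces $(G_1)_{jj}$ by $m_1$ up to an $\mathcal O_\prec(1/\sqrt{N\eta_1})$ error, and combined with $\sum_j S_{ij} = 1$ the first bracketed term contributes $m_1\Ex{G_1 G_2 A}$; after multiplication by $m_1$ this cancels the $m_1^2$ term on the left-hand side, producing the master equation
\[
\Ex{G_1 G_2 A} = m_1\Ex{G_2 A} - m_1\Ex{\underline{W G_1 G_2 A}} + \frac{m_1}{N}\sum_{ij} S_{ij}(G_1 G_2)_{jj}(G_2 A)_{ii} + (\text{controllable errors}).
\]

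The first two terms on the right are dominated by $\Lambda_k/(L\sqrt{\eta_*})$ via Lemmas~\ref{lem:bndGA} and~\ref{lem:underline}. For the remaining sum, I apply the square-root factorization $S_{ij} = \sum_\mu \tilde S_{i\mu}\tilde S_{\mu j}$ to rewrite it as $\tfrac{m_1}{N}\sum_\mu \Ex{G_1 G_2 N\dg\tilde S_\mu}\Ex{G_2 A N\dg\tilde S_\mu}$, then split $N\dg\tilde S_\mu = I + N\dg\tilde S_\mu^\circ$ into trace-plus-traceless parts. The leading sub-term $m_1\Ex{G_1 G_2}\Ex{G_2 A}$ is controlled because $|\Ex{G_1 G_2}| \lesssim 1/\eta_*$ is paired with the small factor $|\Ex{G_2 A}| \prec \rho_2\Lambda_k/\sqrt{NL}$ from Lemma~\ref{lem:bndGA}; the two mixed sub-terms are bounded by Cauchy--Schwarz in $\mu$ combined with the vector-level bound on $\Ex{G_2 \cdot N\dg\tilde S^\circ}$ already derived in Lemma~\ref{lem:bndGA}. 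The critical sub-term $\tfrac{m_1}{N}\sum_\mu \Ex{G_1 G_2 N\dg\tilde S_\mu^\circ}\Ex{G_2 A N\dg\tilde S_\mu^\circ}$ is handled by first specializing $A = N\dg\tilde S_\nu^\circ$ and deriving a closed linear system for the vector $\{\Ex{G_1 G_2 N\dg\tilde S_\nu^\circ}\}_\nu$. The coupling matrix has the form $m_1 m_2 \mathcal C$ with $\mathcal C_{\nu\mu} = S_{\nu\mu} - 1/N$ (plus local-law errors), so by Lemma~\ref{lem:matrix-inversion} the system is invertible with operator norm bounded away from $1$; combined with the vector bound on $\Ex{\underline{W G_1 G_2 N\dg\tilde S^\circ}}$ from Lemma~\ref{lem:underline}, this yields $|\Ex{G_1 G_2 N\dg\tilde S_\mu^\circ}| \prec \Lambda_1/(L\sqrt{\eta_*})$. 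Substituting back into the master equation proves \eqref{eq:GGAdiag} for general $A \in \mathbb M_k^\circ$.

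The bounds \eqref{eq:GImGAdiag} and \eqref{eq:ImGImGAdiag} follow by the same scheme applied to $G_1 \Im G_2 A$ and $\Im G_1 \Im G_2 A$ respectively. The renormalized terms inherit the extra prefactors $\rho_2$ and $\rho_1\rho_2$ directly from Lemma~\ref{lem:underline}, and the deterministic averages $\Ex{G_1 \Im G_2}$ and $\Ex{\Im G_1 \Im G_2}$ appearing in the $\tilde S$ decomposition are of size $\rho_2/\eta_*$ and $\rho_1\rho_2/\eta_*$ respectively, so the $\rho$ factors propagate through every step. The main obstacle I anticipate is the delicate bookkeeping in the matrix inversion step: the two-resolvent system's inverse must still yield the sharp rate $1/(L\sqrt{\eta_*})$ rather than a weaker $1/L$ or $1/\eta_*$. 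A secondary subtlety is that $(G_1 G_2)_{jj}$ has size $1/\eta_*$ rather than $\mathcal O(1)$, so in each of the four sub-terms of the $\tilde S$ decomposition the $\eta_*$-large factor must be verified to always pair with either the tracelessness of $A$ or a $\rho$-small factor coming from a $\underline{W\cdots}$ bound.
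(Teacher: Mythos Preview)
Your proposal follows essentially the same route as the paper's proof: the resolvent identity on $G_1$, the renormalization, the $\tilde S$-splitting of the troublesome $S_{ij}$ sum, specialization $A=N\dg\tilde S_\nu^\circ$ to produce a closed linear system with coupling matrix $m_1m_2\mathcal C + o(1)$, inversion via Lemma~\ref{lem:matrix-inversion}, and the bootstrap to \eqref{eq:GImGAdiag}, \eqref{eq:ImGImGAdiag} with extra $\rho$ factors. The structure is correct.

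There is one point where your writeup is too quick. You absorb the term
\[
\frac{m_1}{N}\sum_{i,j}S_{ij}\bigl[(G_1)_{jj}-m_1\bigr](G_1G_2A)_{ii}
\]
into ``controllable errors'' by invoking the local law for $(G_1)_{jj}-m_1$. The entrywise bound $|(G_1)_{jj}-m_1|\prec(N\eta_1)^{-1/2}$ is not sharp enough here: pairing it with the crude diagonal-entry bound $|(G_1G_2A)_{ii}|\prec\sqrt{\rho_1\rho_2}/\sqrt{\eta_1\eta_2}$ yields an error of order $\sqrt{N\eta}$ relative to the target $\Lambda_k/(L\sqrt{\eta_*})$ in the bulk regime $\rho_i\asymp 1$, $\eta_1=\eta_2$. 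Nor can you move this term to the left-hand side as $(1+o(1))\Ex{G_1G_2A}$, since the weight $\sum_j S_{ij}[(G_1)_{jj}-m_1]$ genuinely depends on $i$. The paper handles this term by the \emph{same} $\tilde S$-splitting you apply to the other sum, producing
\[
\frac{m_1}{N}\sum_\mu\Ex{G_1\,N\dg\tilde S_\mu^\circ}\Ex{G_1G_2A\,N\dg\tilde S_\mu}\;+\;m_1\Ex{G_1-m_1}\Ex{G_1G_2A},
\]
where the second piece is absorbed into the left-hand side and the first is bounded via Lemma~\ref{lem:bndGA} and the Cauchy--Schwarz estimate $|\Ex{G_1G_2B}|\prec N\rho_1\rho_2/L$ for bounded diagonal $B$. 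Once you treat this term by splitting rather than by a naive entrywise bound, the rest of your argument goes through as written.
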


\begin{proof}

First, we use identity \eqref{eq:G-to-WG-identity} on $G_1$ and replace $WG_1$ by its renormalization from Definition \ref{defn:Renorm} as follows.
\begin{equation}
\begin{aligned}
    (G_1 G_2 A)_{ik} &= \sum_{l = 1}^N \left( m_1 \delta_{il} - m_1 (\underline{WG_1})_{il} + m_1 \sum_{j = 1}^N S_{ij}((G_1)_{jj} - m_1)(G_1)_{il}\right)(G_2A)_{lk} \\
    &= m_1(G_2A)_{ik} - m_1(\underline{WG_1}G_2A)_{ik} + m_1 \sum_{j = 1}^N S_{ij}((G_1)_{jj} - m_1)(G_1G_2A)_{ik}.
\end{aligned}
\end{equation}
From Definition \ref{defn:Renorm}, we can see that
\begin{equation}
    (\underline{WG_1G_2A})_{ik} = (\underline{WG_1}G_2A)_{ik} + \sum_{j=1}^N S_{ij} (G_1G_2)_{jj} (G_2A)_{ik}.
\end{equation}
Thus,
\begin{equation} \label{eq:GGA}
\begin{aligned}
    \Ex{G_1G_2A} &= m_1 \Ex{G_2A} - m_1 \Ex{\underline{WG_1G_2A}} + \frac{1}{N} m_1 \sum_{i,j=1}^N S_{ij} (G_1G_2)_{jj} (G_2A)_{ii} \\
    &+ \frac{1}{N} m_1 \sum_{i,j = 1}^N S_{ij}((G_1)_{jj} - m_1)(G_1G_2A)_{ii}.
\end{aligned}
\end{equation}

In the last two terms, we split $S$ as follows.
\begin{equation}\label{eq:G1G2A}
\begin{aligned}
    \Ex{G_1G_2A} &= m_1 \Ex{G_2A} - m_1 \Ex{\underline{WG_1G_2A}} \\
    &+ \frac{1}{N} m_1 \sum_{\mu = 1}^N \Ex{G_1G_2N\dg{\tilde{S}^\circ_\mu}} \Ex{G_2AN\dg{\tilde{S}_\mu}} + m_1 \Ex{G_1G_2} \Ex{G_2A} \\
    &+ \frac{1}{N} m_1 \sum_{\mu = 1}^N \Ex{G_1 N\dg{\tilde{S}_{\mu}^\circ}} \Ex{G_1G_2AN\dg{\tilde{S}_{\mu}}} + m_1 \Ex{G_1 - m_1} \Ex{G_1G_2A}.
\end{aligned}
\end{equation}

To bound the first term we use Lemma \ref{lem:bndGA} and get
\begin{equation}\label{eq:GGAbound-term1}
    \left|m_1\Ex{G_2A} \right| \prec \frac{\rho_2 \Lambda_k}{\sqrt{NL}}.
\end{equation}

Suppose $B = AN\dg{\tilde{S}_{\mu}}$ or $B = I$. We apply Cauchy-Schwarz to bound $\Ex{G_1G_2B}$.
\begin{equation}
    \left|\Ex{G_1G_2B}\right| \le \Ex{G_1 G_1^*}^{\frac{1}{2}} \Ex{G_2 B B^* G_2^*}^{\frac{1}{2}} \prec \frac{\sqrt{\rho_1 \rho_2}}{\sqrt{\eta_1 \eta_2}} \prec \frac{N\rho_1\rho_2}{L}.
\end{equation}
This gives us
\begin{equation}\label{eq:GGAbound-term2}
\begin{aligned}
    \left|\Ex{G_1 N\dg{\tilde{S}_{\mu}^\circ}} \Ex{G_1G_2AN\dg{\tilde{S}_{\mu}}}\right| \prec \frac{\Lambda_1}{N\sqrt{\eta_*}}\cdot \frac{N\rho_1\rho_2}{L} \prec \frac{\rho_1\rho_2\Lambda_1}{L\sqrt{\eta_*}}\,.
\end{aligned}
\end{equation}
and
\begin{equation}\label{eq:GGAbound-term3}
\begin{aligned}
    \left|\Ex{G_1G_2} \Ex{G_2A}\right| \prec \frac{\Lambda_k}{N\sqrt{\eta_*}}\cdot \frac{N\rho_1\rho_2}{L} \prec \frac{\rho_1\rho_2\Lambda_k}{L\sqrt{\eta_*}}
\end{aligned}
\end{equation}
Using this estimate, the estimate for $\Ex{GA}$ from above and $\Ex{G_1 - m_1} \prec \frac{1}{N\eta_1}$, we get
\begin{equation} \label{eq:G1G2Aint}
\begin{aligned}
    \left(1 + \mathcal{O}\left(\frac{1}{N\eta_1}\right)\right)\Ex{G_1G_2A} &= - m_1 \Ex{\underline{WG_1G_2A}} \\
    &+  \frac{1}{N} m_1 \sum_{\mu = 1}^N \Ex{G_1G_2N\dg{\tilde{S}^\circ_\mu}} \Ex{G_2AN\dg{\tilde{S}_\mu}}
    \\&+ \mathcal O_\prec\left(\frac{\Lambda_k \rho_1 \rho_2}{L \sqrt{\eta_*}}\right).
\end{aligned}
\end{equation}
We can bound $\langle \underline{W G_1 G_2 A} \rangle \prec \frac{\Lambda_k}{L\sqrt{\eta_*} }$ via Lemma \ref{lem:underline}.

Now we plug in $A = N\dg{\tilde{S}^\circ_\nu}$ to get the system of equations
\begin{equation}\label{eq:G1G2S-system}
    \left(I - C\right) \Ex{G_1G_2N\dg{\tilde{S}^\circ}} =  \mathcal{O}_\prec\left(\frac{\Lambda_1}{L\sqrt{\eta_*}}\right),
\end{equation}
where $C$ is a matrix with 
\begin{equation}
\begin{aligned}
    C_{\nu \mu} &= \frac{1}{N} m_1 \Ex{G_2 N\dg{\tilde{S}^\circ_\nu} N\dg{\tilde{S}_\mu}} + \mathcal{O}_\prec\left(\frac{1}{N\eta}\right)\delta_{\nu\mu} \\
    &= \frac{1}{N} m_1 \Ex{(G_2 - m_2) N\dg{\tilde{S}^\circ_\nu} N\dg{\tilde{S}_\mu}} + m_1 m_2 \left(S_{\nu\mu} - \frac{1}{N}\right) + \mathcal{O}_\prec\left(\frac{1}{N\eta}\right)\delta_{\nu\mu} \\
    &= m_1 m_2 \left(S_{\nu\mu} - \frac{1}{N}\right) + \mathcal{O}_\prec\left(\frac{1}{N\eta}\right)\delta_{\nu\mu} + \mathcal{O}_\prec\left(\frac{1}{N^{3/2}\eta^{1/2}}\right).
\end{aligned}
\end{equation}
To get the above estimates, we used the fact that $A= N\dg\tilde{S}^{\circ}_{\nu}$ has the better error bounds from \eqref{eq:GdiagS}.

Similarly to the proof of Lemma \ref{lem:bndGA} we use Lemma \ref{lem:matrix-inversion} to invert matrix $I-C$ in \eqref{eq:G1G2S-system} and get
\begin{equation}
    \left|\Ex{G_1G_2N\dg{\tilde{S}^\circ}}\right| \prec \frac{\Lambda_1}{L\sqrt{ \eta_*}}.
\end{equation}

Now, we can plug these estimates into equation \eqref{eq:G1G2Aint}.
In general, we see that we have,
\begin{equation}
\begin{aligned}
   &\left[1 - \mathcal{O}_\prec\left(\frac{1}{N \eta}\right)\right] |\langle G_1 G_2 A \rangle|  \prec  \Big|\frac{1}{N} \sum_{\mu=1}^N \langle G_1 G_2 N \dg\tilde{S}_{\mu} \rangle \langle G_2 A N \dg\tilde{S}_{\mu} \rangle  \Big|+ \frac{\Lambda_k}{L \sqrt{ \eta_*}} \\
   & \hspace{1 cm} \prec \Big|\frac{1}{N} \sum_{\mu =1}^N \langle G_1 G_2 N\dg\tilde{S}_{\mu} \rangle \left[\langle (G_2 - m_2) AN \dg\tilde{S}_{\mu} \rangle+ m_2 \langle AN \dg\tilde{S}_{\mu} \rangle\right] \Big|+ \frac{\Lambda_k}{L \sqrt{ \eta_*}} \\
   &\hspace{1 cm}\prec \frac{\Lambda_k}{L \sqrt{ \eta_*}}.
\end{aligned}
\end{equation}
The fact that $A$ is diagonal allows us to use the local law in order to bound 
\begin{equation}
|\langle (G_2 - m_2) AN \dg\tilde{S}_{\mu} \rangle| \prec \frac{1}{\sqrt{N\eta_2}}.
\end{equation}
Additionally, $\langle AN \dg\tilde{S}_{\mu} \rangle$ is $\mathcal{O}(1)$, while $|\langle G_1 G_2 N \dg\tilde{S}_{\mu} \rangle| \prec \frac{\Lambda_k}{L \sqrt{ \eta_*}}$. All these estimates together complete the proof of \eqref{eq:GGAdiag}. 

Other bounds in Lemma \ref{GGAdiag} are proved similarly. For example, to bound $\Ex{G_1 \Im G_2 A}$ we use the identity
\begin{equation}
\begin{aligned}
    \Ex{G_1 \Im G_2 A} &= m_1 \Ex{\Im G_2A} - m_1 \Ex{\underline{WG_1\Im G_2A}} \\
    &+ m_1 \frac{1}{N}\sum_{i,j =1}^N S_{ij}(G_1 \Im G_2)_{jj}(G_2^* A)_{ii} \\
    &+ m_1 \frac{1}{N}\sum_{i,j =1}^N S_{ij}(G_1 G_2)_{jj}(\Im G_2 A)_{ii} \\
    &+ m_1 \frac{1}{N}\sum_{i,j =1}^N S_{ij} ((G_1)_{jj} - m_1) (G_1\Im G_2A)_{ii} \\
\end{aligned}
\end{equation}
After splitting $S$, we get
\begin{equation}\label{eq:GImGAdiag-identity}
\begin{aligned}
    \Ex{G_1 \Im G_2 A} &= m_1 \Ex{\Im G_2A} - m_1 \Ex{\underline{WG_1\Im G_2A}} \\
    &+ m_1 \frac{1}{N}\sum_{\mu =1}^N \Ex{G_1 \Im G_2 N\dg\tilde{S}_\mu^\circ} \Ex{G_2^*AN\dg\tilde{S}_\mu} + m_1 \Ex{G_1 \Im G_2} \Ex{G_2^* A} \\
    &+ m_1 \frac{1}{N}\sum_{\mu =1}^N \Ex{G_1 G_2 N\dg\tilde{S}_\mu^\circ} \Ex{\Im G_2AN\dg\tilde{S}_\mu} + m_1 \Ex{G_1G_2} \Ex{\Im G_2 A} \\
    &+ m_1 \frac{1}{N}\sum_{\mu =1}^N \Ex{G_1 N\dg\tilde{S}_\mu^\circ} \Ex{G_1 \Im G_2 A N\dg\tilde{S}_\mu} + m_1 \Ex{G_1 - m_1} \Ex{G_1 \Im G_2 A} 
\end{aligned}
\end{equation}

We use \eqref{eq:GGAdiag} to get 
\begin{equation}
\begin{aligned}
    &\left|\Ex{G_1 G_2 N\dg\tilde{S}_\mu^\circ} \Ex{\Im G_2AN\dg\tilde{S}_\mu}\right| \\
    &\prec \frac{\Lambda_k}{L\sqrt{ \eta_*}}\cdot \left|\Ex{(\Im G_2-\Im m_2)AN\dg\tilde{S}_\mu} + \Im m_2 \Ex{AN\dg\tilde{S}_\mu}\right| \prec \frac{\Lambda_k\rho_2}{L\sqrt{ \eta_*}}.
\end{aligned}
\end{equation}
Using the bounds \eqref{eq:GGAbound-term1}, \eqref{eq:GGAbound-term2}, \eqref{eq:GGAbound-term3} and Lemma \ref{lem:underline} we get the same self-consistent equation for $\Ex{G_1 \Im G_2 A}$ as \eqref{eq:G1G2S-system} with error term $\mathcal{O}\left(\frac{\rho_2\Lambda_1}{L\sqrt{\eta_*}}\right)$ on the right. By inverting $I-C$ the same way we get
\begin{equation}
    \left|\Ex{G_1 \Im G_2 N\dg\tilde{S}^\circ_\mu}\right| \prec \frac{\rho_2\Lambda_1}{L\sqrt{\eta_*}}.
\end{equation}
By plugging this into \eqref{eq:GImGAdiag-identity}, we get \eqref{eq:GImGAdiag}.

The bound \eqref{eq:ImGImGAdiag} is proved similarly.


\end{proof}

\begin{remark}\label{imaginary-rho}
As one can see from the proof above, the computation of the traces involving imaginary parts of one the Green's functions matrices involve more terms, but these terms can be analyzed in a manner that is very similar to those traces that do not involves the imaginary part. The most important point to realize is that the inclusion of the imaginary part causes the appearance of an extra factor of $\rho$. In most cases, this either uses the fact that $\Ex{\Im G} = O(\rho)$ or that $\frac{1}{N\eta} = \frac{\rho}{L}$.

\end{remark}

\subsection{Bounds on $\Ex{GAGA}$}

\begin{lemma}\label{lem:GAGAdiag}
For $A_1, A_2 \in \mathbb{M}_k^\circ$ we have
\begin{equation}
   | \Ex{G_1 A_1 G_2 A_2} |
   \prec 1 + \frac{\Lambda_k^2}{\sqrt{L}} +  \frac{\Lambda_1 \Lambda_k \Lambda_{k+1}}{\sqrt{NL}},
\end{equation}

\begin{equation}
   | \Ex{G_1 A_1 \Im G_2 A_2} |\prec \rho_2 \left[ 1 + \frac{\Lambda_k^2}{\sqrt{L}} +  \frac{\Lambda_1 \Lambda_k \Lambda_{k+1}}{\sqrt{NL}}\right],
\end{equation}
and
\begin{equation}
   | \Ex{\Im G_1 A_1 \Im G_2 A_2} |\prec \rho_1 \rho_2 \left[ 1 + \frac{\Lambda_k^2}{\sqrt{L}} +  \frac{\Lambda_1 \Lambda_k \Lambda_{k+1}}{\sqrt{NL}}\right]. 
\end{equation}

\end{lemma}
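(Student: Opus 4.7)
I plan to follow the template of Lemmas \ref{lem:bndGA} and \ref{GGAdiag}, starting from the resolvent identity
\[
G_1 = m_1 I - m_1 \underline{WG_1} + m_1 \dg(S\dg(G_1 - m_1 I))\, G_1,
\]
multiplying by $A_1 G_2 A_2$ on the right, then converting $\underline{WG_1}\cdot A_1 G_2 A_2$ to the fully renormalized product $\underline{WG_1 A_1 G_2 A_2}$ via Definition \ref{defn:Renorm}. The new feature compared to Lemma \ref{GGAdiag} is that the IBP correction from the middle $G_2$ now carries the extra factor $A_1$ between $G_1$ and $G_2$. Taking the normalized trace produces the master identity
\begin{equation*}
\begin{aligned}
\Ex{G_1 A_1 G_2 A_2} &= m_1 \Ex{A_1 G_2 A_2} - m_1 \Ex{\underline{W G_1 A_1 G_2 A_2}} \\
&\quad + \frac{m_1}{N}\sum_{i,j} S_{ij}(G_1 A_1 G_2)_{jj}(G_2 A_2)_{ii} + \frac{m_1}{N}\sum_{i,j} S_{ij}((G_1)_{jj} - m_1)(G_1 A_1 G_2 A_2)_{ii}.
\end{aligned}
\end{equation*}

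For the leading term, cyclicity gives $m_1 \Ex{A_1 G_2 A_2} = m_1 \Ex{G_2 A_2 A_1}$; decomposing $A_2 A_1 = (A_2 A_1)^\circ + \Ex{A_2 A_1} I$ with $(A_2 A_1)^\circ \in \mathbb M_{k+1}^\circ$ and using Lemma \ref{lem:bndGA} yields an $O(\rho_2 \Lambda_{k+1}/\sqrt{NL})$ traceless contribution and an $O(1)$ scalar contribution (the latter accounting for the ``$1$'' in the target bound). The renormalization term is bounded by Lemma \ref{lem:underline}, producing the $\Lambda_k^2/\sqrt L$ contribution.

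The two $S$-sums are processed using $S_{ij} = \sum_\mu \tilde S_{i\mu}\tilde S_{\mu j}$ with $\tilde S_{i\mu} = \tilde S_{i\mu}^\circ + 1/N$, in parallel with the earlier lemmas. Using $\frac{1}{N}\sum_\mu N\dg\tilde S_\mu = I$ and $\sum_\mu N\dg\tilde S_\mu^\circ = 0$, the fourth term of the master identity produces the piece $m_1 \Ex{G_1 - m_1 I}\Ex{G_1 A_1 G_2 A_2}$, absorbed into a $1+o(1)$ factor on the left by the local law, plus a sum $\frac{m_1}{N}\sum_\mu \Ex{G_1 N\dg\tilde S_\mu^\circ}\Ex{G_1 A_1 G_2 A_2 N\dg\tilde S_\mu^\circ}$; the third term similarly reduces to a lower-order scalar $m_1\Ex{G_1 A_1 G_2}\Ex{G_2 A_2}$ (controlled by Lemmas \ref{lem:bndGA} and \ref{GGAdiag}) plus the sum $\frac{m_1}{N}\sum_\mu \Ex{G_2 A_2 N\dg\tilde S_\mu^\circ}\Ex{G_1 A_1 G_2 N\dg\tilde S_\mu^\circ}$.

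The main obstacle is controlling the two $\mu$-sums. My plan is to apply Cauchy--Schwarz in $\mu$ combined with the identity
\[
\sum_\mu |\Ex{D N\dg\tilde S_\mu^\circ}|^2 = \sum_{i,j} (D_{ii} - \Ex D)\overline{(D_{jj} - \Ex D)}\, S_{ij} \le \sum_i |D_{ii} - \Ex D|^2,
\]
which exploits $S\mathbf 1 = \mathbf 1$ together with the orthogonality of $(D_{ii} - \Ex D)_i$ to $\mathbf 1$. For $D = G_1 A_1 G_2$ or $D = G_1 A_1 G_2 A_2$, the right-hand side is dominated by $\mathrm{Tr}(DD^*) = N\Ex{DD^*}$; the Ward identity $G_i G_i^* = \Im G_i/\eta_i$ converts $\Ex{DD^*}$ into averages involving $\Im G_i$, and the definition $\Xi_{A_1} \le \Lambda_k$ combined with Lemma \ref{GAGA&Xi} supplies the factor $\Lambda_k$. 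The factor $\Lambda_{k+1}$ in the target bound enters through the partner $\Ex{G_2 A_2 N\dg\tilde S_\mu^\circ}$: decomposing $A_2 N\dg\tilde S_\mu^\circ$ into its traceless piece $(A_2 N\dg\tilde S_\mu^\circ)^\circ \in \mathbb M_{k+1}^\circ$ and applying Lemma \ref{lem:bndGA} at level $k+1$. The factor $\Lambda_1$ arises from the sharp bound \eqref{eq:GdiagS} on the $\Ex{G_1 N\dg\tilde S_\mu^\circ}$-type factor. Dividing through by the $1+o(1)$ coefficient on the left then yields the bound on $\Ex{G_1 A_1 G_2 A_2}$, and the two $\Im$-variants follow by the same scheme, with each occurrence of $\Im G_i$ contributing an additional factor of $\rho_i$ as in Remark \ref{imaginary-rho}.
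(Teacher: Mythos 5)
Your setup matches the paper's: you derive the same master identity from $G_1 = m_1 I - m_1\underline{WG_1} + m_1\dg(S\dg(G_1 - m_1 I))G_1$, split the $S$-sums via $\tilde S_{i\mu} = \tilde S^\circ_{i\mu} + \tfrac1N$, handle the renormalized term by Lemma~\ref{lem:underline}, and absorb $m_1\Ex{G_1-m_1}\Ex{G_1A_1G_2A_2}$ into the left-hand side. The divergence, and the gap, is in how you propose to control the critical $\mu$-sum
\[
\frac{m_1}{N}\sum_\mu\Ex{G_1 A_1 G_2\,N\dg\tilde S^\circ_\mu}\Ex{G_2 A_2\,N\dg\tilde S_\mu}.
\]
You intend Cauchy--Schwarz in $\mu$ together with the identity $\sum_\mu|\Ex{D\,N\dg\tilde S^\circ_\mu}|^2=\sum_{i,j}(D_{ii}-\Ex D)\overline{(D_{jj}-\Ex D)}S_{ij}\le\sum_i|D_{ii}-\Ex D|^2\le\tr(DD^*)$, for $D=G_1A_1G_2$. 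That identity is correct, but the resulting bound is much too weak. By Ward, $\tr(DD^*)=\frac{N}{\eta_1\eta_2}\Ex{\Im G_1 A_1 \Im G_2 A_1^*}\prec\frac{N\rho_1\rho_2\Lambda_k}{\eta_1\eta_2}$, so Cauchy--Schwarz in $\mu$ against the $O(\sqrt N)$ second factor yields at best
\[
\sqrt{\frac{\rho_1\rho_2\Lambda_k}{\eta_1\eta_2}}\,\asymp\,\frac{N\rho_1\rho_2}{L}\sqrt{\Lambda_k},
\]
which in the bulk with $\eta\ll1$ is of size $N/L\gg1$, and is nowhere near the target $1+\Lambda_k^2/\sqrt L+\Lambda_1\Lambda_k\Lambda_{k+1}/\sqrt{NL}$. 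The $\ell^2_\mu$ bound discards exactly the cancellations that make this term small.

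The step you are missing is the one the paper uses to replace the $\ell^2_\mu$ estimate by a uniform-in-$\mu$ one: plug $A_2=N\dg\tilde S^\circ_\nu$ into the master identity, which turns the $\mu$-sum back into a linear map of the unknown vector $(\Ex{G_1A_1G_2 N\dg\tilde S^\circ_\mu})_\mu$, producing the closed system $(I-C)\Ex{G_1A_1G_2N\dg\tilde S^\circ}=m_1m_2\Ex{A_1 N\dg\tilde S^\circ}+\mathcal O_\prec(\cdots)$ with $C_{\nu\mu}=m_1m_2(S_{\nu\mu}-\tfrac1N)+o(1)$. Lemma~\ref{lem:matrix-inversion} bounds $\|C\|\le 1-c$, so $I-C$ is invertible with bounded inverse, and one obtains $|\Ex{G_1A_1G_2N\dg\tilde S^\circ_\mu}|\prec 1+\Lambda_k^2/\sqrt L+\Lambda_1\Lambda_2\Lambda_k/\sqrt{NL}$ uniformly in $\mu$. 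That uniform bound is what closes the estimate; without it, the $\mu$-sum cannot be controlled at the claimed strength. You also do not state the auxiliary sub-lemma (bounding $\Ex{G_1 A G_2 B}$ with $A\in\mathbb M_k^\circ$, $B\in\mathbb M_l$ by $\Lambda_k\Lambda_l+\Lambda_k/(L\eta_*)$), which the paper needs for $\Ex{G_1 A_1 G_2 A_2 N\dg\tilde S_\mu}$ in the other $\mu$-sum; that gap is more minor since your plan of combining \eqref{eq:GdiagS} with Lemma~\ref{lem:bndGA}-type bounds is in the right spirit there.
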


\begin{proof}
We prove the first inequality here. The other two are proved similarly. See Remark \ref{imaginary-rho} for details.

First, we use the identity
\begin{equation}
\begin{aligned}
    \Ex{G_1 A_1 G_2 A_2}  &= m_1 m_2 \Ex{A_1 A_2} + m_1 \Ex{A_1(G_2 - m_2 I)A_2} - m_1 \Ex{\underline{ W G_1 A_1 G_2 A_2}} \\
    &+ \frac{1}{N} m_1 \sum_{i, j} S_{ij} (G_1-m_1 I)_{jj} (G_1 A_1 G_2 A_2)_{ii} \\
    & + \frac{1}{N} m_1 \sum_{i, j} S_{ij} (G_1 A_1 G_2)_{jj} (G_2 A_2)_{ii}.
\end{aligned}
\end{equation}


By splitting the terms on the last two lines, we get,
\begin{equation}\label{eq:G1A1G2A2diag}
\begin{aligned}
    \Ex{G_1 A_1 G_2 A_2} &= m_1 m_2 \Ex{A_1 A_2} + m_1 \Ex{A_1(G_2 - m_2 I)A_2} - m_1 \Ex{\underline{ W G_1 A_1 G_2 A_2}} \\
    &+\frac{m_1}{N}\sum_\mu  \Ex{ N \dg\tilde{S}^{\circ}_{\mu} G_1} \Ex{G_1 A_1 G_2 A_2 N\dg \tilde{S}_\mu} + m_1 \Ex{G_1 - m_1} \Ex{G_1A_1G_2A_2} \\
    &+\frac{m_1}{N} \sum_{\mu} \Ex{G_1 A_1 G_2 N\dg\tilde{S}_{\mu}^\circ} \Ex{G_2 A_2 N \dg\tilde{S}_{\mu}} + m_1 \Ex{G_1 A_1 G_2} \Ex{G_2 A_2}.
\end{aligned}
\end{equation}

Now we plug in $A_2 = N\dg{\tilde{S}^\circ_\nu}$ into the identity above and get a system of equations.

\begin{equation}\label{eq:G1A1G2S}
\begin{aligned}
    \Ex{G_1 A_1 G_2 N\dg{\tilde{S}^\circ_\nu}} &= m_1 m_2 \Ex{A_1 N\dg{\tilde{S}^\circ_\nu}} + m_1 \Ex{A_1(G_2 - m_2 I)N\dg{\tilde{S}^\circ_\nu}} - m_1 \Ex{\underline{ W G_1 A_1 G_2 N\dg{\tilde{S}^\circ_\nu}}} \\
    &+ \frac{m_1}{N}\sum_\mu  \Ex{ N \dg\tilde{S}^{\circ}_{\mu} G_1} \Ex{G_1 A_1 G_2 N\dg{\tilde{S}^\circ_\nu} N\dg \tilde{S}_\mu} + m_1 \Ex{G_1 A_1 G_2} \Ex{G_2 A_2} \\
    &+ \sum_{\mu} C_{\nu \mu} \Ex{G_1 A_1 G_2 N\dg\tilde{S}_{\mu}^\circ},
\end{aligned}
\end{equation}
where
\begin{equation}
\begin{aligned}
    C_{\nu \mu} &= m_1\Ex{G_1 - m_1} \delta_{\nu \mu} + m_1 \frac{1}{N} \Ex{G_2 N\dg{\tilde{S}^{\circ}_{\nu}} N\dg{\tilde{S}_{\mu}} } \\
    &= m_1 m_2 \left(S_{\nu \mu} - \frac{1}{N}\right) + \mathcal{O}\left(\frac{1}{N^{3/2} \eta_*^{1/2}}\right) + \mathcal{O}\left(\frac{1}{N\eta_*}\right)\delta_{\nu \mu}.
\end{aligned}
\end{equation}

To bound the error terms in \eqref{eq:G1A1G2S} we need the following lemma.
\begin{lemma}
If $A \in \mathbb M_k^\circ$ and $B\in \mathbb M_l$, then 
\begin{equation}
\begin{aligned}
     &\left|\langle G_1AG_2B \rangle\right| \prec \Lambda_k\Lambda_l +   \frac{\Lambda_k}{ L  \eta_*},\,\\
     &\left|\langle \Im G_1AG_2B \rangle\right| \prec \rho_1 \Lambda_k\Lambda_l +   \frac{\rho_1 \Lambda_k}{ L  \eta_*},\,\\
     &\left|\langle \Im G_1A \Im G_2B \rangle\right| \prec \rho_1 \rho_2 \Lambda_k\Lambda_l +   \frac{\rho_1 \rho_2 \Lambda_k}{ L  \eta_*}.\,
\end{aligned}
\end{equation}
\end{lemma}
\begin{proof}
Let us divide the matrix $B = B^\circ  + \langle B \rangle I$, where $B^\circ \in  \mathbb M_l^\circ$ by the definition. Then, we have that
\begin{equation}
    \langle G_1AG_2B \rangle =  \langle G_1AG_2B^\circ \rangle +  \langle B \rangle \langle G_2 G_1A\rangle\,.
\end{equation}
The desired result follows from \cite[(5.34)]{cipolloni-erdos-schroder-2021} and Lemma \ref{GGAdiag}.
\end{proof}

Then 
\begin{equation}\label{eq:GAGAerror1d}
\left|\Ex{ N \dg\tilde{S}^{\circ}_{\mu} G_1} \Ex{G_1 A_1 G_2 A_2 N\dg(\tilde{S}_\mu)}\right| \prec  \frac{\Lambda_1}{\sqrt{L N}} \left( \Lambda_k \Lambda_{k+1} + \frac{\Lambda_k}{L \sqrt{\eta_*}}\right), 
\end{equation}
and
\begin{equation}\label{eq:GAGAerror2d}
    \left|\Ex{G_1 A_1 G_2} \Ex{G_2 A_2} \right| \prec \frac{\Lambda_k^2}{L^{3/2} \sqrt{N\eta_*}} \prec \frac{\Lambda_k^2}{L^2}. 
\end{equation}
From Lemma \ref{lem:underline}, we have
\begin{equation}
    \left| \Ex{\underline{ W G_1 A_1 G_2 N\dg{\tilde{S}^\circ_\nu}}}\right| \prec \frac{\Lambda_k^2}{\sqrt{L}}.
\end{equation}

Then by using the local law for $G_2 - m_2$, we have in general for diagonal $A_1$ and $A_2$,
\begin{equation} \label{eq:AG-mA}
\left| m_1 \Ex{A_1(G_2 - m_2 I) A_2} \right|  \prec \frac{1}{\sqrt{N \eta_2}} \prec \frac{1}{\sqrt{L}}.
\end{equation}
This crucially used the fact that $A_1$ and $A_2$ are diagonal to get a simpler estimate. We specialize this estimate in the case that $A_2 = N \dg(\tilde{{S}^{\circ}})$. Recall that we use $\dg(\tilde{S}^{\circ})$ to denote the vector constructed by considering each $\dg(\tilde{S^{\circ}_{\mu}})$.

Substituting all of these estimates in \eqref{eq:G1A1G2S}, we get
\begin{equation}
\begin{aligned}
    \left(I - C \right)\Ex{G_1 A_1 G_2 N\dg{\tilde{S}^\circ}} &= m_1 m_2 \Ex{A_1 N\dg{\tilde{S}^\circ}}  \\
    &+ \mathcal{O}\left(\frac{1}{\sqrt{L}} + \frac{\Lambda_k^2}{\sqrt{L}} + \frac{\Lambda_1 \Lambda_2 \Lambda_k}{\sqrt{NL}} + \frac{\Lambda_1 \Lambda_k}{L^2}\right)\,.
\end{aligned}
\end{equation}

By inverting matrix $I - C$ using a similar argument to the proof of Lemma \ref{lem:bndGA}, we get
\begin{equation}\label{eq:GAGSerror1}
  \Big|  \Ex{G_1 A_1 G_2 N\dg{\tilde{S}^\circ}}\Big| \prec 1  + \frac{\Lambda_k^2}{\sqrt{L}} + \frac{\Lambda_1 \Lambda_2 \Lambda_k}{\sqrt{NL}} .
\end{equation}

Now we bound $\Ex{G_1A_1G_2A_2}$. To bound $\Ex{G_2 A_2 N \dg\tilde{S}_\mu}$ we write $G_2 = (G_2 - m_2) + m_2$. Then since $A_2$ is diagonal, we have
\begin{equation}
    \left|\Ex{G_2 A_2 N \dg\tilde{S}_\mu}\right| \prec 1 + \frac{1}{\sqrt{N\eta_2}} \prec 1.
\end{equation}
Then by using \eqref{eq:GAGSerror1}, we get
\begin{equation}\label{eq:GAGAbigerror}
    \left|\frac{m_1}{N} \sum_{\mu} \Ex{G_1 A_1 G_2 N\dg\tilde{S}_{\mu}} \Ex{G_2 A_2 N \dg\tilde{S}_{\mu}}\right| \prec 1 + \frac{\Lambda_k^2}{\sqrt{L}} + \frac{\Lambda_1 \Lambda_2 \Lambda_k}{\sqrt{NL}}.
\end{equation}
Finally, by plugging in \eqref{eq:GAGAbigerror}, \eqref{eq:GAGAerror1d}, \eqref{eq:GAGAerror2d} and Lemma \ref{lem:underline} into \eqref{eq:G1A1G2A2diag} and moving $\Ex{G_2 - m_2} \Ex{G_1A_1G_2A_2}$ term to the left, we get
\begin{equation}
\begin{aligned}
    \left|\Ex{G_1 A_1 G_2 A_2}\right| &\prec 1 + \frac{\Lambda_k^2}{\sqrt{L}} +  \frac{\Lambda_1 \Lambda_k \Lambda_{k+1}}{\sqrt{NL}} .
\end{aligned}
\end{equation}

\end{proof}

\subsection{Bounds on $\Ex{\Im GA\Im G^tA}$}
\begin{lemma}
\label{GAGtA}
If $A_1,A_2 \in \mathbb M_k$, then 
\begin{equation}
  \left|\Ex{\Im G_1 A_1 \Im G_2^t A_2}\right| \prec \rho_1\rho_2\left(1 + \frac{\Lambda_1 \Lambda_{k+1}^2 }{\sqrt{NL}} +  \frac{\Lambda_{k+1}^2}{L}\right)\,.
\end{equation}
\end{lemma}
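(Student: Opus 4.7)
The plan is to follow the strategy of Lemma \ref{lem:GAGAdiag}: derive a self-consistent equation for $\Ex{G_1 A_1 G_2^t A_2}$ by applying the identity $G_1 = m_1 I - m_1 W G_1 - m_1^2 G_1$, multiplying by $A_1 G_2^t A_2$, taking the normalized trace, and renormalizing the middle $W$ via Definition \ref{defn:Renorm}. Specializing $A_2 = N\dg\tilde{S}^\circ_\nu$ produces a linear system indexed by $\nu$ that is inverted via Lemma \ref{lem:matrix-inversion}, and back-substitution gives the bound for general $A_2 \in \mathbb{M}_k$. Taking imaginary parts of each resolvent then extracts the factor $\rho_i$ per side as in Remark \ref{imaginary-rho}. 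The renormalized term $\Ex{\underline{W G_1 A_1 G_2^t A_2}}$ is controlled by Lemma \ref{lem:underline}, and the correction from $\partial_W G_1$ is handled exactly as in Lemma \ref{lem:GAGAdiag}; together with the leading $m_1 m_2 \Ex{A_1 A_2}$, these account for the constant $1$ in the final bound, multiplied by $\rho_1\rho_2$.

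The main technical obstacle, and the source of the $\Lambda_{k+1}^2$ factor, is the correction term coming from $\partial_W G_2^t$. Using $\partial_{ab}(G_2^t)_{pq} = -(G_2^t)_{pb}(G_2^t)_{aq}$, this correction equals
\begin{equation*}
-\frac{m_1}{N}\sum_{i,j} S_{ij}\,(G_1 A_1 G_2^t)_{ij}\,(G_2^t A_2)_{ij}\,,
\end{equation*}
which, in contrast to its $G_2$-counterpart in Lemma \ref{lem:GAGAdiag}, does \emph{not} factor into a product of two normalized traces. Splitting $S_{ij} = \sum_\mu \tilde{S}_{i\mu}\tilde{S}_{\mu j}$ and using that $\tilde{D}_\mu := N\dg\tilde{S}_\mu$ is diagonal, the algebraic identity $\sum_{i,j} X_{ij} Y_{ij} \tilde{S}_{i\mu}\tilde{S}_{\mu j} = N^{-2}\tr(\tilde{D}_\mu X \tilde{D}_\mu Y^t)$ rewrites this correction as
\begin{equation*}
-\frac{m_1}{N^2}\sum_\mu \Ex{G_1 A_1 G_2^t \tilde{D}_\mu A_2^t G_2 \tilde{D}_\mu}\,,
\end{equation*}
a three-resolvent trace whose natural matrix blocks are the products $A_i\tilde{D}_\mu \in \mathbb{M}_{k+1}$.

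To bound this three-resolvent trace, I would apply Cauchy-Schwarz in the Hilbert-Schmidt norm, $|\tr(M_1 M_2)| \le \|M_1\|_{HS}\|M_2\|_{HS}$, choosing the split so that each $\|M_i\|_{HS}^2$ reduces, via the Ward identity $GG^* = \Im G/\eta$, to a quadratic form of the shape $\Ex{\Im G_1 B \Im G_2^t B^*}/(\eta_1\eta_2)$ with $B$ built from $A_i \tilde{D}_\mu \in \mathbb{M}_{k+1}$. By Lemma \ref{GAGA&Xi} each such quadratic form is bounded by $\rho_1\rho_2 \bar{\Xi}_B \le \rho_1\rho_2\Lambda_{k+1}$, so the product is at most $\rho_1\rho_2 \Lambda_{k+1}^2/(\eta_1\eta_2)$. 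Multiplying by the $1/N^2$ prefactor and summing over $\mu$ (using $N\eta_i\rho_i \ge L$) yields the $\rho_1\rho_2\Lambda_{k+1}^2/L$ error term. The $\rho_1\rho_2\Lambda_1 \Lambda_{k+1}^2/\sqrt{NL}$ term arises from combining this estimate with the fluctuation bound $\Ex{G N\dg\tilde{S}^\circ_\mu} \prec \sqrt{\rho}\Lambda_1/\sqrt{NL}$ from Lemma \ref{lem:bndGA}, which enters whenever we split $\tilde{S}_{\mu j} = \tilde{S}^\circ_{\mu j} + 1/N$ in the intermediate expansions. Specialization to $A_2 = N\dg\tilde{S}^\circ_\nu$ and inversion of $I - C$ via Lemma \ref{lem:matrix-inversion} are unchanged since the coefficient matrix agrees structurally with the one from Lemma \ref{lem:GAGAdiag}; the principal difficulty is genuinely concentrated in the handling of the three-resolvent trace produced by the transpose.
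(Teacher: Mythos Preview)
Your identification of the three-resolvent trace produced by $\partial_W G_2^t$ as the new difficulty, and your Cauchy--Schwarz/Ward bound yielding the $\rho_1\rho_2\Lambda_{k+1}^2/L$ contribution, match the paper exactly (it invokes \cite[(5.35)]{cipolloni-erdos-schroder-2021} for this step). However, the paper's proof is structurally simpler than what you propose, and your assertion that ``the coefficient matrix agrees structurally with the one from Lemma \ref{lem:GAGAdiag}'' is incorrect.

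The point is this: in the non-transposed case the $\partial_W G_2$ correction factors as $\frac{1}{N}\sum_{i,j} S_{ij}(G_1A_1G_2)_{jj}(G_2A_2)_{ii}$, and after the $S$-splitting becomes $\frac{1}{N}\sum_\mu\Ex{G_1A_1G_2\, N\dg\tilde{S}^\circ_\mu}\Ex{G_2 A_2\, N\dg\tilde{S}_\mu}$. Specializing $A_2 = N\dg\tilde{S}^\circ_\nu$ in this product is what generates the off-diagonal matrix $C_{\nu\mu}\approx m_1m_2(S_{\nu\mu}-1/N)$ to be inverted. In the transposed case that term is \emph{entirely replaced} by the three-resolvent trace you wrote down; there is no surviving term of the shape $\Ex{G_1A_1G_2^t\, N\dg\tilde{S}^\circ_\mu}\Ex{\cdots}$, hence no nontrivial $C$ to invert. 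The only self-referential piece in the identity is $m_1\Ex{G_1-m_1}\Ex{\Im G_1 A_1 \Im G_2^t A_2}$, which merely gives the harmless factor $1+O(1/N\eta_1)$ on the left. The paper therefore skips the specialization $A_2=N\dg\tilde{S}^\circ_\nu$ and the inversion of $I-C$ altogether: it writes out the full identity for $\Ex{\Im G_1 A_1 \Im G_2^t A_2}$ and bounds every term directly.

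One smaller point: the $\rho_1\rho_2\Lambda_1\Lambda_{k+1}^2/\sqrt{NL}$ contribution arises cleanly from the $\partial_W G_1$ correction $\Ex{N\dg\tilde{S}^\circ_\mu G_1}\Ex{\Im G_1 A_1 \Im G_2^t A_2\, N\dg\tilde{S}_\mu}$---the first factor bounded by Lemma \ref{lem:bndGA}, the second by $\rho_1\rho_2\Lambda_{k+1}^2$ via \cite[(5.34)]{cipolloni-erdos-schroder-2021}---rather than from any further splitting of the three-resolvent trace. Your proof would still go through (the ``system'' you would set up is trivially invertible), but it is more work than needed.
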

\begin{proof}This case can be proved by estimating each term separately. First, write
\begin{equation}
\label{eq:Gtcase}
\begin{split}
    \Ex{\Im G_1 A_1 \Im G_2^t A_2}  
    &= \Im m_1 \Im m_2 \Ex{A_1 A_2} + \Im m_1 \Ex{A_1(\Im G_2^t - \Im m_2 I)A_2}\\& - m_1 \Ex{\underline{ W \Im G_1 A_1 \Im G_2^t A_2}} - \Im m_1 \Ex{\underline{W G_1^* A_1 \Im G_2^t A_2}} \\
    &+\frac{m_1}{N}\sum_\mu  \Ex{ N \dg\tilde{S}^{\circ}_{\mu} G_1} \Ex{\Im G_1 A_1 \Im G_2^t A_2 N\dg(\tilde{S}_\mu)} \\ & + \frac{m_1}{N} \sum_{\mu}  \Ex{ N \dg\tilde{S}^{\circ}_{\mu} \Im G_1} \Ex{G_1^* A_1 \Im G_2^t A_2 N\dg(\tilde{S}_\mu)}\\
    &+ \frac{\Im m_1}{N}\sum_\mu  \Ex{ N \dg\tilde{S}^{\circ}_{\mu} G_1^*} \Ex{ G_1^* A_1 \Im G_2^t A_2 N\dg(\tilde{S}_\mu)}\\
    &+ m_1 \Ex{G_1 - m_1} \Ex{\Im G_1A_1\Im G_2^tA_2}+ m_1 \Ex{\Im G_1 - \Im m_1} \Ex{G_1^* A_1\Im G_2^tA_2} \\
    &+ \Im m_1 \Ex{ G_1^* - \overline{m_1}} \Ex{G_1^* A_1\Im G_2^tA_2}\\
&+\frac{m_1}{N^2}\sum_\mu  \Ex{\Im G_1A_1 G^t_2 N\dg(\tilde{S}_\mu)A_2^t \Im G_2 N\dg(\tilde{S}_\mu)}\\&+\frac{m_1}{N^2}\sum_\mu \Ex{\Im G_1A_1\Im G^t_2 N\dg(\tilde{S}_\mu)A_2^t G_2^* N\dg(\tilde{S}_\mu)}\\
&+\frac{\Im m_1}{N^2}\sum_\mu  \Ex{ G_1^*A_1 G^t_2 N\dg(\tilde{S}_\mu)A_2^t \Im G_2 N\dg(\tilde{S}_\mu)}\\&+\frac{\Im m_1}{N^2}\sum_\mu \Ex{G_1^* A_1\Im G^t_2 N\dg(\tilde{S}_\mu)A_2^t G_2^* N\dg(\tilde{S}_\mu)}.
\end{split}
\end{equation}
Using Lemma \ref{lem:bndGA}, \cite[(5.34),(5.35)]{cipolloni-erdos-schroder-2021}, and \eqref{eq:stronglocallaw}, we get
\begin{equation*}
    \begin{split}
    &\left|\Ex{N\dg\tilde{S}^\circ_\mu G_1}\Ex{\Im G_1A_1\Im G_2^tA_2N\dg(\tilde{S}_\mu)}\right| \prec \frac{\sqrt{\rho_1}\Lambda_1}{N\sqrt{\eta_1}}\cdot \rho_1\rho_2\Lambda_{k+1}^2 \prec \frac{\rho_1\rho_2\Lambda_1\Lambda_{k+1}^2}{\sqrt{NL}}\,,\\
    &  \left|  \Ex{\Im G_1A_1 G^t_2 N\dg(\tilde{S}_\mu)A_2^t \Im G_2 N\dg(\tilde{S}_\mu)}\right| \prec  \frac{\sqrt{\rho_1\rho_2}\Lambda_k\Lambda_{k+1}}{\sqrt{\eta_1\eta_2}} \prec \frac{\rho_1\rho_2\Lambda_{k+1}^2}{L},\,\\
    &|\Im m_1\Ex{A_1(\Im G_2^t - \Im m_2 I) A_2}|  \prec \frac{\rho_1 \rho_2}{\sqrt{L}}.
    \end{split}
\end{equation*}
Other terms are estimated similarly. Substituting these bounds into \eqref{eq:Gtcase} gives the desired result.


\end{proof}

\subsection{Continuity argument for bounding $\Lambda_1$}
For each value of $E$ and $J$ there is a unique value of $\eta$ such that $N \eta \rho(E + i\eta) = J$. We let $F(E,J)$ be the unique $\eta$ so that this is true. 

We can now define the functions
\begin{equation}
\begin{aligned}
    G_{A}(J)&:= \max_{E_1 ,E_2 \in \{\gamma_a: 1\leq a \leq N\}} \frac{\Ex{\Im G(E_1 + \ti F(E_1,J)) A \Im G(E_2 + \ti F(E_2,J)) A^*}}{\rho_1(E_1+ \ti F(E_1,J)) \rho_2(E_2 + \ti F(E_2,J))}, \\
    G^t_{A}(J)&:= \max_{E_1 ,E_2 \in \{\gamma_a: 1\leq a \leq N\}} \frac{\Ex{\Im G(E_1 + \ti F(E_1,J)) A \Im G(E_2 + \ti F(E_2,J))^t A^*}}{\rho_1(E_1+ \ti F(E_1,J)) \rho_2(E_2 + \ti F(E_2,J))}.
\end{aligned}
\end{equation}

\begin{lemma}

Uniformly in $E_1,E_2$ that there is a constant $C$ so that,
\begin{equation} \label{eq:Gderbound}
\begin{aligned}
   \left| \partial_J\frac{\Ex{\Im G(E_1 + \ti F(E_1,J)) A \Im G(E_2 + \ti F(E_2,J)) A^*}}{\rho_1(E_1+ \ti F(E_1,J)) \rho_2(E_2 + \ti F(E_2,J))} \right| &\le N^{C}, \\
   \left| \partial_J\frac{\Ex{\Im G(E_1 + \ti F(E_1,J)) A \Im G(E_2 + \ti F(E_2,J))^t A^*}}{\rho_1(E_1+ \ti F(E_1,J)) \rho_2(E_2 + \ti F(E_2,J))} \right| &\le N^{C}.
\end{aligned}
\end{equation}

\end{lemma}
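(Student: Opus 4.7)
The plan is to bound $\partial_J$ of both quotients by decomposing via the chain rule into $\eta_1, \eta_2$ derivatives composed with $\partial_J F(E_i,J)$, and to control each resulting factor by a polynomial in $N$ using the a priori lower bound $\eta_i = F(E_i,J) \geq N^{-1+\epsilon}$, which follows from $J \geq N^\epsilon$ together with the boundedness of $\rho$.

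First, implicit differentiation of the defining relation $NF(E,J)\rho(E + \ti F(E,J)) = J$ in $J$ gives
\begin{equation}
\partial_J F(E,J) = \frac{1}{N\rho(E+\ti \eta) + N\eta\,\partial_\eta \rho(E+\ti \eta)},
\end{equation}
where $\eta = F(E,J)$. Since $\rho$ and $\partial_\eta \rho$ of the semicircle are uniformly bounded, and since $\rho(E+\ti\eta)$ stays polynomially bounded below whenever $E=\gamma_a$ for $1\le a\le N$ and $\eta \geq N^{-1+\epsilon}$, we get $|\partial_J F(E,J)| \leq N^{C}$. A parallel computation gives $|\partial_J \rho_i| \leq N^{C}$, so that $\partial_J(1/\rho_i)$ is also polynomially bounded.

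Next, I would use the identity $\partial_\eta G(E+\ti \eta) = \ti G(E+\ti \eta)^2$ (and its analogue for $G^*$, $G^t$) to compute the $\eta$-derivatives of the numerators explicitly. Each $\partial_{\eta_i}$ turns one of $G_i, G_i^*, G_i^t$ into a product of two such factors, e.g.\ $\partial_{\eta_1}\langle \Im G_1 A \Im G_2 A^*\rangle$ is a sum of terms of the form $\langle G_1^{\#} G_1^{\#} A \Im G_2 A^*\rangle$ with $G^\#\in\{G,G^*\}$. These are bounded by $\|A\|^2 \|G_1\|^2 \|G_2\|$ via Cauchy--Schwarz and the deterministic estimate $\|G\| \leq \eta^{-1}$, and hence by $N^{C}$ in our regime. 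The transposed variant is identical since transposition commutes with $\partial_\eta$.

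Combining these bounds through the chain rule for the composition $\eta_i = F(E_i,J)$ and the quotient rule for the factor $1/(\rho_1\rho_2)$ yields both estimates in \eqref{eq:Gderbound}, at the cost of enlarging the constant $C$. The main (mild) obstacle is purely bookkeeping: one must check that every factor of $\eta_i$, $\rho_i$, $\partial_\eta \rho_i$, or $G$ that appears is polynomially controlled in the regime $J \geq N^\epsilon$. No refined cancellation or high-probability estimate beyond the deterministic bound $\|G\| \leq \eta^{-1}$ is needed, which is exactly why this lemma is sufficient input for the grid-based continuity argument that follows.
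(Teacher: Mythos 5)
Your proposal follows essentially the same route as the paper: implicit differentiation of $J = N\eta\rho$ to get $\partial_J F$, deterministic polynomial bounds on the Green's function derivative (you via $\partial_\eta G = \ti G^2$ and $\|G\|\le\eta^{-1}$, the paper via the spectral decomposition $G_{ij} = \sum_\alpha u_\alpha(i)\overline{u_\alpha(j)}/(\lambda_\alpha-z)$ — both equivalent), and the chain/quotient rule. One place where your write-up glosses over a real point: to bound $\partial_J F = \bigl(N(\rho + \eta\,\partial_\eta\rho)\bigr)^{-1}$ from above you need a \emph{lower} bound on $\rho + \eta\,\partial_\eta\rho$. Asserting that "$\rho$ and $\partial_\eta\rho$ are uniformly bounded" and "$\rho$ is polynomially bounded below" does not by itself rule out cancellation, since $\partial_\eta\rho$ can be negative. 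The paper handles this cleanly by differentiating the Stieltjes integral representation to show
\begin{equation*}
\rho + \eta\,\partial_\eta \rho = \int_{-2}^2 \frac{2\eta\,\rho_{sc}(x)\,(x-E)^2}{((x-E)^2+\eta^2)^2}\,\mathrm dx \gtrsim \eta > 0,
\end{equation*}
which gives positivity for free. You should either cite this identity or note that $\rho \gg \eta$ in the relevant regime so the $\eta\,\partial_\eta\rho$ term cannot cancel $\rho$. Also, "$\partial_\eta\rho$ uniformly bounded" is imprecise near the spectral edge; what you actually have (and need) is only a polynomial-in-$N$ bound, valid because $\eta \ge N^{-1+\epsilon}$, which is the bound the paper records as $|\partial_\eta\rho| \le \eta^{-4} \le N^4$.
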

\begin{proof}


First, let us find $\partial_J \eta$ at $E$. We have,
\begin{equation}
\begin{aligned}
    & 1 = N \partial_{J}\eta (\rho + \eta \partial_{\eta} \rho),\\
    & \partial_J \eta = \frac{1}{N (\rho + \eta \partial_{\eta} \rho)}.
\end{aligned}
\end{equation}

Now, if $J \ge N^{\epsilon}$, using the fact that $\rho \le 1$ implies $\eta \ge N^{-1 + \epsilon}$. Furthermore, we would also know that $|\rho| \gtrsim \eta$ for $E \in [-2,2]$. 

We have the following integral expression for $\rho$.
\begin{equation}
\begin{aligned}
    &\rho(E+ \ti \eta) =\int_{-2}^2 \frac{ \eta \rho_{sc}(x) }{(x-E)^2 + \eta^2} \text{d} x,\\
    &\partial_{\eta} \rho(E+ \ti \eta) = \int_{-2}^2 \frac{\rho_{sc}(x) ( (x-E)^2 - \eta^2)}{((x-E)^2 +\eta^2)^2} \text{d} x,\\
    & \rho + \eta \partial_{\eta} \rho = \int_{-2}^2 \frac{2 \eta \rho_{sc}(x) (x-E)^2 }{(( x-E)^2 + \eta^2)^2} \text{d} x \gtrsim \eta .
\end{aligned}
\end{equation}
Thus, $|\partial_J \eta| \le N^{-\epsilon}$. 

The above expression should also allow us to assert that $|\partial_\eta \rho| \le \frac{1}{\eta^4} \le N^4$.

\begin{equation}
    \max_{i,j}|\partial_\eta G_{ij}| =  \max_{i,j}\left|\sum_\alpha \partial_\eta \left(\frac{1}{\lambda_\alpha - z} \right) u_\alpha(i) \overline u_\alpha(j)\right| \lesssim N^3\,.
\end{equation}

By applying the product rule, this would imply equation \eqref{eq:Gderbound}.
\end{proof}

\begin{corollary}
For any matrix $A$ with $||A|| \le 1$ and
for any value of  $N^{\epsilon} \le J \le N$, we have that
\begin{equation}
\begin{aligned}
    & G_{A}(J- N^{-C -2}) \le G_{A}(J) + N^{-2},\\
    & G_A^t(J - N^{-C-2}) \le G_{A}^t(J) + N^{-2}.
\end{aligned}
\end{equation}
Taking the union over $M \in \mathbb{M}_1$, which is a $O(N)$ family of matrices, and applying Lemma \ref{GAGA&Xi}, we also have,
\begin{equation}
\label{eq:LamCon}
    \Lambda_1(J - N^{-C-2}) \prec \Lambda_1(J) +  N^{-2}
\end{equation}

\end{corollary}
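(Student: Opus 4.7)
The proof proposal is a direct application of the mean value theorem using the derivative bound \eqref{eq:Gderbound}. Since the functions
\[
J \mapsto \frac{\Ex{\Im G(E_1 + \ti F(E_1,J)) A \Im G(E_2 + \ti F(E_2,J)) A^*}}{\rho_1 \rho_2}
\]
and its transpose analog have $J$-derivative bounded by $N^C$ uniformly in $E_1, E_2$, and since the maxima over the finite set $\{\gamma_a : 1 \le a \le N\}$ preserves Lipschitz constants, both $G_A(J)$ and $G_A^t(J)$ are themselves $N^C$-Lipschitz in $J$. Thus for any step of size $\Delta J = N^{-C-2}$,
\[
G_A(J - N^{-C-2}) \le G_A(J) + N^C \cdot N^{-C-2} = G_A(J) + N^{-2},
\]
and similarly for $G_A^t$. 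This gives the first two inequalities directly.

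For the $\Lambda_1$ statement, first recall that $\mathbb M_1 = \{I, M\} \cup \{N\dg\tilde{S}_\mu\}_{1 \le \mu \le N}$ has cardinality $O(N)$, and $\mathbb M_1^\circ$ is obtained by subtracting traces, so it too has cardinality $O(N)$. By Lemma \ref{GAGA&Xi}, for each $B$ in these families, $\Xi_B(J)$ and $\overline{\Xi}_B(J)$ are bounded above (stochastically) by $G_B(J)$ and $G_B^t(J)$ respectively (up to the $N^\xi$ factor implicit in $\prec$), and conversely bounded below by these quantities. Applying the Lipschitz bound from the previous paragraph to each $B$ individually yields
\[
\Xi_B(J - N^{-C-2}) \prec G_B(J - N^{-C-2}) \le G_B(J) + N^{-2} \prec \Xi_B(J) + N^{-2},
\]
and similarly for $\overline{\Xi}_B$. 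Since $\prec$ allows us to absorb an $N^\epsilon$ factor and the family has only polynomially many elements, a union bound over $B \in \mathbb M_1 \cup \mathbb M_1^\circ$ preserves stochastic domination, so the maximum over the family satisfies the same estimate. Adding $1$ and taking the max, we arrive at \eqref{eq:LamCon}.

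The argument is essentially routine once the derivative bound \eqref{eq:Gderbound} is in hand; the only care needed is to ensure that the union bound over $\mathbb M_1 \cup \mathbb M_1^\circ$ does not destroy the stochastic domination, which is guaranteed since the family has polynomial size in $N$ and $\prec$ absorbs $N^\xi$ factors.
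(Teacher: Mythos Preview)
Your proof is correct and matches the paper's intended argument. The paper states this corollary without proof, treating it as an immediate consequence of the derivative bound \eqref{eq:Gderbound}; your write-up supplies exactly the expected details---mean value theorem for the Lipschitz estimate, preservation of Lipschitz constants under a finite maximum, and a union bound over the $O(N)$-sized family $\mathbb M_1 \cup \mathbb M_1^\circ$ combined with Lemma \ref{GAGA&Xi} to pass to $\Lambda_1$.
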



\begin{proof}[Proof of Theorem \ref{thm:Xicontrolparam} for Diagonal Matrices]

By using Lemmas \ref{GAGA&Xi}, \ref{GAGtA}, and \ref{lem:GAGAdiag}, we would be able to derive the following relation:
$$
\Lambda_k^2(J) \prec 1 + \frac{\Lambda_1 (J)\Lambda_{k+1}^2(J)}{\sqrt{J}}\,.
$$
That is, for any $\delta, D>0$, there exists $N_0(\delta,D)>0$ such that for all $N \geq N_0$,
\begin{equation}
 \mathbb P \left(   \Lambda_k^2(J) \leq N^{\delta}\left( 1 + \frac{\Lambda_1 (J)\Lambda_{k+1}^2(J)}{\sqrt{J}}\right) \right) \geq 1 - N^{-D}\,,
\end{equation}
for all $M \in \mathbb M_k$. 

Let $\Omega_\delta$ be the event that
$$  \Lambda_k^2(J) \leq N^{\delta}\left( 1 + \frac{\Lambda_1 (J)\Lambda_{k+1}^2(J)}{\sqrt{J}}\right),\quad \Lambda_1(J - N^{-C-2}) \leq N^{\delta} \left(\Lambda_1(J) +  N^{-2} \right)
$$
holds for $k=1,...,\lceil 4/\epsilon \rceil$, $J= N - t N^{-C+2}$, $t \in \{0, \ldots, \lfloor N^{C-2} -  N^{C-2 + \epsilon} \rfloor\}$ and all $M \in \mathbb M_1$. Then there exists $N_0(\delta,D)>0$ such that for all $N \geq N_0$, 
$\mathbb P(\Omega_\delta)\geq 1- N^{-D}$.

This identity can be iterated to show that for  $T = \lceil 4/ \epsilon \rceil$,
\begin{equation}
    \Lambda_1^2(J) \leq N^{\delta T} \left[1+ \sum_{t=1}^{T}\frac{(\Lambda_1^2(J))^t}{J^{t/2}} + \frac{(\Lambda_1^2(J))^{T+1}}{J^{(T+1)/2}} \Lambda_{T+1}^2(J)\right].
\end{equation}
on $\Omega_\delta$.

$\Lambda_{T+1}$ can be given the trivial bound $\eta_*^{-1}$, so this ultimately gives us,
\begin{equation}
    \Lambda_1^2(J) \leq  N^{\delta T} \left[  1+ \sum_{t=1}^{T}\frac{(\Lambda_1^2(J))^t}{J^{t/2}} + \frac{(\Lambda_1^2(J))^{T+1}}{J^{(T+1)/2}} \frac{1}{\eta_*^2}\right].
\end{equation}
Since $J \geq N^{ \epsilon}$, if we choose $T = \lceil 8/ \epsilon \rceil$,
this implies that either
$\Lambda_1^2(J) \leq (T+2) N^{\delta T}$ or $\Lambda_1^2(J) \ge N^{-\epsilon/4}\sqrt{J} \ge N^{\epsilon/4}.$ Now we choose $\delta < \frac{\epsilon^2}{40}$.


Now, assume by induction for some $t'$, we know that $\Lambda(N - t' N^{-C +2}) \leq (T+2) N^{\delta T} $. On the event $\Omega_\delta$, we can assert that $$\Lambda_1^2(N - (t'+1) N^{-C+2}) \leq N^{\delta} ( (T+2) N^{\delta T}  + N^{-2}) \leq (T+3)N^{\delta (T+1)} < N^{\epsilon/4}\,. $$  Hence, we must have $\Lambda_1^2(N - (t'+1) N^{-C+2}) \leq  (T+2) N^{\delta T} $ as well, due to the dichotomy that we have shown earlier. By induction, on $\Omega_\delta$ we have
$$\Lambda_1(N^{\epsilon}) \leq (T+2) N^{\delta T}\,.$$
\end{proof}

\section{Proof of Theorem \ref{thm:Xicontrolparam} for general $M$}\label{sec:generalM}

\subsection{Bounds on $\Ex{GA}$}

\begin{lemma} \label{lem:bndGAgeneral}
Let $A \in \mathbb M_k^\circ$. Then, we have that,
\begin{equation}
\label{eq:GA-nondia}
   | \langle GA \rangle| \prec \rho\left(\frac{ \Lambda_k}{\sqrt{NL}} + \frac{\Lambda_{k+3}}{NL}\right). 
\end{equation}
\end{lemma}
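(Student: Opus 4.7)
The plan is to mirror the proof of Lemma \ref{lem:bndGA} and then identify and resolve the single new obstruction arising when $A$ is no longer diagonal. Starting from the identity $(GA)_{ik}=mA_{ik}-m(\underline{WGA})_{ik}+m\sum_j S_{ij}(G_{jj}-m)(GA)_{ik}$, I would take the normalized trace, apply the splitting $S_{ij}=\sum_\mu\tilde{S}_{i\mu}\tilde{S}_{\mu j}$, and isolate traceless components to reach
\begin{equation*}
\langle GA\rangle\bigl[1-\langle G-mI\rangle\bigr]=-m\langle\underline{WGA}\rangle+\frac{m^2}{N}\sum_\mu\langle AN\dg\tilde{S}_\mu\rangle\langle GN\dg\tilde{S}_\mu^\circ\rangle+\frac{m}{N}\sum_\mu\langle(G-mI)AN\dg\tilde{S}_\mu\rangle\langle GN\dg\tilde{S}_\mu^\circ\rangle.
\end{equation*}
Lemma \ref{lem:underline} handles the renormalized term by the bound $\rho\Lambda_k/\sqrt{NL}$.

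For the second sum, Cauchy--Schwarz in $\mu$ combined with the algebraic identity $\sum_\mu|\langle AN\dg\tilde{S}_\mu\rangle|^2=\sum_{i,j}A_{ii}A_{jj}S_{ij}\leq N\|A\|^2$ and with $|\langle GN\dg\tilde{S}_\mu^\circ\rangle|\prec\rho\Lambda_1/\sqrt{NL}$ (which is inherited from Lemma \ref{lem:bndGA}, since $N\dg\tilde{S}_\mu^\circ$ is diagonal) produces a contribution also of size $\rho\Lambda_k/\sqrt{NL}$, absorbable into the main term of the statement.

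The third sum is the new difficulty: in the diagonal proof this was handled by the entrywise local law, which gives no useful information for non-diagonal $A$. I would decompose $AN\dg\tilde{S}_\mu=\langle AN\dg\tilde{S}_\mu\rangle I+(AN\dg\tilde{S}_\mu)^\circ$ with the traceless part in $\mathbb{M}_{k+1}^\circ$; the scalar part gains a factor $\langle G-mI\rangle\prec 1/(N\eta)$ and is lower order. For the traceless part, the orthogonality $\tilde{S}\tilde{S}=S$ collapses the sum over $\mu$ into $\frac{m}{N}\sum_{i,j}\bigl((GA)_{ii}-\langle G\rangle A_{ii}\bigr)\bigl(G_{jj}-\langle G\rangle\bigr)S_{ij}$, to which I would apply the resolvent identity once more on $(GA)_{ii}$, producing a renormalized contribution at matrix level $k+1$ plus secondary product terms whose deterministic observables now lie in $\mathbb{M}_{k+2}$, and, after one further splitting to close the scalar--trace subtraction introduced by $\langle G\rangle$, in $\mathbb{M}_{k+3}^\circ$. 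Invoking Lemma \ref{lem:underline} at this elevated level and closing with a Cauchy--Schwarz in $\mu$ produces the advertised $\rho\Lambda_{k+3}/(NL)$ correction.

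The chief obstacle is bookkeeping: each resolvent identity application produces new matrices formed as products with $N\dg\tilde{S}_\mu$, so in principle the iteration could grow the matrix class unboundedly. The fact that it closes at exactly level $k+3$, rather than propagating further, depends on organizing the decomposition so that each application of the orthogonality identity $\tilde{S}\tilde{S}=S$ eliminates one summation layer, and on using the sharper diagonal estimates from Section \ref{sec:diagM} for the $N\dg\tilde{S}_\mu^\circ$ factors that remain at every stage.
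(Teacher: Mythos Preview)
Your setup through the displayed self-consistent equation is correct, and the treatment of the first two sums is fine. The gap is in the third sum.

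Your plan there is to collapse the $\mu$-sum via $\tilde S\tilde S=S$ and then ``apply the resolvent identity once more on $(GA)_{ii}$''. First, the collapsed expression you wrote down is not what the algebra gives: carrying out the sum yields
\[
\frac{m}{N}\sum_{i,j}((G-mI)A)_{ii}\,G_{jj}\,S_{ij}\;-\;m\langle G\rangle\langle (G-mI)A\rangle,
\]
not the form with $(GA)_{ii}-\langle G\rangle A_{ii}$ that you claim. Second, and more importantly, reapplying the resolvent identity to $(GA)_{ii}$ simply reproduces the original equation: the procedure is circular, and there is no mechanism in what you describe that lowers the order of the problematic term. Your own closing paragraph concedes that the bookkeeping is the obstacle and that closure at level $k+3$ ``depends on organizing the decomposition'' in a way you have not specified.

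The paper avoids all of this by recognizing the self-similar structure directly. Instead of separating your second and third sums, combine them back into
\[
\frac{m}{N}\sum_\mu \langle G B_\mu\rangle\,\langle G\,N\dg\tilde S_\mu^\circ\rangle,\qquad B_\mu:=A\,N\dg\tilde S_\mu,
\]
and then split $\langle GB_\mu\rangle=m\langle B_\mu\rangle+\langle B_\mu\rangle\langle G-m\rangle+\langle GB_\mu^\circ\rangle$. Using $|\langle G\,N\dg\tilde S_\mu^\circ\rangle|\prec\rho/\sqrt{NL}$ from the diagonal case, this gives the clean recursion
\[
\sup_{A\in\mathbb M_k^\circ}|\langle GA\rangle|\;\prec\;\frac{\rho}{\sqrt{NL}}\Bigl(\Lambda_k+\sup_{B\in\mathbb M_{k+1}^\circ}|\langle GB\rangle|\Bigr),
\]
since $B_\mu^\circ\in\mathbb M_{k+1}^\circ$. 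This is the whole point: the error term is of \emph{exactly the same form} one level up. Iterating $T$ times produces a geometric series in $\rho/\sqrt{NL}$, and taking $T=3$ together with the trivial bound $\prec 1/\eta$ on the remainder yields \eqref{eq:GA-nondia}. No collapsing, no second application of the resolvent identity, and the level count $k+3$ falls out automatically from the number of iterations rather than from any delicate organization.
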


\begin{proof}



At this point, we can return to an analysis for general matrices $A$. From the equation \eqref{eq:longGA}, we see that,
\begin{equation}
     \langle GA \rangle [ 1- \langle G - mI \rangle] = - m\langle{\underline{WGA}}\rangle + \frac{1}{N} \sum_{\mu} \langle G B_\mu\rangle \langle G N \dg(\tilde{S}_{\mu}^{\circ}) \rangle,
\end{equation}
with $B_\mu:= A (N \dg\tilde{S}_{\mu})$. Using
$$  \Ex{GB_\mu} = m\Ex{B_\mu} + \Ex{B_\mu}\Ex{G-m}+ \Ex{GB_\mu^\circ},$$
and
$$|\langle G N \dg(\tilde{S}_{\mu}^{\circ}) \rangle | \prec \frac{\rho}{\sqrt{NL}}\,,\quad \Ex{B_\mu} \prec 1, \quad |\Ex{G-m}| \prec \frac{1}{N\eta}, \quad |\langle{\underline{WGA}}\rangle| \prec  \frac{\rho\Lambda_k}{\sqrt{NL}}  \,,$$
we get
\begin{equation}
     |\langle GA \rangle| \prec \frac{\rho}{\sqrt{NL}} \left(\Lambda_k+\max_\mu\Ex{G B^\circ_\mu} \right)\,.
\end{equation}
By iterating this bound, we get
\begin{equation}
    \sup_{A \in \mathbb M_k^\circ}  |\langle GA \rangle| \prec \sum_{t=0}^T  \left(\frac{\rho}{\sqrt{NL}}\right)^{t+1} \Lambda_{k+t} + \left(\frac{\rho}{\sqrt{NL}}\right)^{T+1} \Lambda_{k+T+1}\,.
\end{equation}
If we take $T = 3$ and use the trivial bounds $\Lambda_{k + T + 1}\prec \frac{1}{\eta}$, $\Lambda_k \geq 1$, we get 
\begin{equation}
    |\langle GA \rangle| \prec \sum_{t \geq 0}^{3}\frac{\Lambda_{k+t} \rho^{t+1}}{\sqrt{NL}^{t+1}} \prec \rho \frac{\Lambda_k}{\sqrt{NL}} + \rho\frac{\Lambda_{k+3}}{NL}.
\end{equation}

\end{proof}

\subsection{Bounds on $\Ex{G G A}$}



\begin{lemma}
\label{GGA}
Let $A \in \mathbb M_k^\circ$. Then
\begin{equation}
\begin{aligned}
    &|\Ex{G_1G_2A}| \prec \frac{\Lambda_k}{L \sqrt{\eta_*}} + \frac{\Lambda_{k+4}}{L^2},\\
    &|\Ex{\Im G_1 G_2 A} | \prec \rho_1 \frac{\Lambda_k}{L \sqrt{\eta_*}} + \rho_1 \frac{\Lambda_{k+4}}{L^2},\\
    & |\Ex{\Im G_1 \Im G_2 A}| \prec \rho_1 \rho_2 \frac{\Lambda_k}{L \sqrt{\eta_*}} + \rho_1 \rho_2 \frac{\Lambda_{k+4}}{L^2}. 
\end{aligned}
\end{equation}
\end{lemma}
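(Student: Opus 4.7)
The plan is to imitate the proof of the diagonal analogue Lemma \ref{GGAdiag}, but wherever that proof invoked the entrywise local law to control quantities of the form $\Ex{(G_2-m_2) A N\dg\tilde S_\mu}$ (which was permissible only because $A$ was diagonal), I will instead invoke the new general-$A$ bound of Lemma \ref{lem:bndGAgeneral}. Concretely, I begin by repeating verbatim the derivation that led to \eqref{eq:G1G2A}: apply the resolvent identity \eqref{eq:G-to-WG-identity} to $G_1$, replace $WG_1$ by its renormalization, take the trace against $G_2 A$, split $S_{ij}=\sum_\mu \tilde S_{i\mu}\tilde S_{\mu j}$ and further decompose $\tilde S_\mu = \tilde S_\mu^\circ + \Ex{\tilde S_\mu}I$. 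This is a purely algebraic step and produces the same self-consistent identity regardless of whether $A$ is diagonal.

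Second, I specialize $A = N\dg\tilde S_\nu^\circ$ and read the result as a linear system $(I-C)\Ex{G_1G_2 N\dg\tilde S^\circ} = \text{RHS}$ with the same coefficient matrix $C$ as in the diagonal case. Since $N\dg\tilde S_\nu^\circ$ is itself diagonal, this subsystem is handled exactly as in Lemma \ref{GGAdiag}: Lemma \ref{lem:matrix-inversion} provides a uniform bound on $(I-C)^{-1}$, yielding $|\Ex{G_1G_2 N\dg\tilde S_\mu^\circ}| \prec \Lambda_1/(L\sqrt{\eta_*})$. This estimate will then be fed back into the self-consistent equation for general $A$.

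When returning to general $A$, the sum $\tfrac{m_1}{N}\sum_\mu \Ex{G_1G_2 N\dg\tilde S_\mu^\circ}\Ex{G_2 AN\dg\tilde S_\mu}$ is the only place where the diagonal proof used a local-law argument that no longer applies. I will instead split
\[\Ex{G_2 AN\dg\tilde S_\mu} = m_2\Ex{AN\dg\tilde S_\mu} + \Ex{G_2 (AN\dg\tilde S_\mu)^\circ} + \Ex{G_2-m_2}\Ex{AN\dg\tilde S_\mu},\]
observe that $(AN\dg\tilde S_\mu)^\circ \in \mathbb M_{k+1}^\circ$, and apply Lemma \ref{lem:bndGAgeneral} to the middle term; the $\Lambda_{k+4}$ appearing in the target estimate is just the $\Lambda_{(k+1)+3}$ produced by that step, the $+1$ being the cost of multiplying $A$ by $N\dg\tilde S_\mu$ and the $+3$ being the internal iteration already performed inside Lemma \ref{lem:bndGAgeneral}. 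The remaining terms on the right-hand side of \eqref{eq:G1G2A}---the leading $m_1\Ex{G_2 A}$, the renormalized trace $\Ex{\underline{WG_1 G_2 A}}$, the cross term $\Ex{G_1 G_2}\Ex{G_2 A}$ controlled by Cauchy--Schwarz, and the factor $1+\Ex{G_1-m_1}$ absorbed onto the left---are estimated exactly as in Lemma \ref{GGAdiag}, using Lemma \ref{lem:bndGAgeneral} in place of the local law for $\Ex{G_2 A}$ and Lemma \ref{lem:underline} for the renormalized term.

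The two variants with one or two imaginary parts of the Green's function are proved by the same scheme, replicating the computation of \eqref{eq:GImGAdiag-identity} and its further $\Im$-version, and harvesting the promised factors $\rho_1,\rho_2$ via the heuristic of Remark \ref{imaginary-rho}: each $\Im$ contributes one additional $\rho$, either through $|\Ex{\Im G_i - \Im m_i}| \prec \rho_i/L$ or through the $\Im$-refined versions of Lemmas \ref{lem:bndGAgeneral} and \ref{lem:underline}. The main obstacle is purely bookkeeping: one must check that a single pass through Lemma \ref{lem:bndGAgeneral} raises the matrix level by at most four, so that the estimate closes without sliding up to uncontrollably high $\Lambda_k$, and that the invertibility of $I-C$ provided by Lemma \ref{lem:matrix-inversion} is uniform in $k$.
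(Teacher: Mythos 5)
Your proposal mirrors the paper's proof essentially step for step: you derive the same self-consistent identity, close the diagonal subsystem for $\Ex{G_1G_2 N\dg\tilde S_\nu^\circ}$ via the $(I-C)$-inversion of Lemma \ref{lem:matrix-inversion}, and then bound $\Ex{G_2 A N\dg\tilde S_\mu}$ in the general case by peeling off its trace and applying Lemma \ref{lem:bndGAgeneral} to $(AN\dg\tilde S_\mu)^\circ\in\mathbb M_{k+1}^\circ$, which produces exactly the $\Lambda_{k+4}$ of the target estimate. The index bookkeeping ($+1$ from multiplication by $N\dg\tilde S_\mu$, $+3$ from the internal iteration in Lemma \ref{lem:bndGAgeneral}) and the treatment of the $\Im$-variants via Remark \ref{imaginary-rho} all agree with the paper.
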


\begin{proof}
We prove the first inequality here. The other two are proved similarly. See Remark \ref{imaginary-rho} for details.

From \eqref{eq:G1G2A} we get
\begin{equation*}
\begin{aligned}
    \Ex{G_1G_2A}[1 - m_1\Ex{G_1 - m_1}] &= m_1 \Ex{G_2A} - m_1 \Ex{\underline{WG_1G_2A}} \\
    &+ \frac{1}{N} m_1 \sum_{\mu = 1}^N \Ex{G_1G_2N\dg{\tilde{S}^\circ_\mu}} \Ex{G_2AN\dg{\tilde{S}_\mu}} + m_1 \Ex{G_1G_2} \Ex{G_2A} \\
    &+ \frac{1}{N} m_1 \sum_{\mu = 1}^N \Ex{G_1 N\dg{\tilde{S}_{\mu}^\circ}} \Ex{G_1G_2AN\dg{\tilde{S}_{\mu}}}\,.
\end{aligned}
\end{equation*}
Suppose $B = AN\dg{\tilde{S}_{\mu}}$ or $B = I$. We apply Cauchy-Schwarz to bound $\Ex{G_1G_2B}$.
\begin{equation}
    \left|\Ex{G_1G_2B}\right| \le \Ex{G_1 G_1^*}^{\frac{1}{2}} \Ex{G_2 B B^* G_2^*}^{\frac{1}{2}} \prec \frac{\sqrt{\rho_1\rho_2}}{\sqrt{\eta_1\eta_2}}.
\end{equation}
Using this estimate, the estimate for $\Ex{GA}$ from above and $\Ex{G_1 - m_1} \prec \frac{1}{N\eta_1}$, we get
\begin{equation}
\begin{aligned}
    \left(1 + \mathcal{O}\left(\frac{1}{N\eta_1}\right)\right)\Ex{G_1G_2A} &= - m_1 \Ex{\underline{WG_1G_2A}} \\
    &+  \frac{1}{N} m_1 \sum_{\mu = 1}^N \Ex{G_1G_2N\dg{\tilde{S}^\circ_\mu}} \Ex{G_2AN\dg{\tilde{S}_\mu}}
    \\&+ \mathcal O\left( \frac{\Lambda_{k}}{L\sqrt{\eta_*}} + \frac{\Lambda_{k+3}}{L^2}  \right).
\end{aligned}
\end{equation}
By Lemma \ref{lem:bndGAgeneral},
\begin{equation}
   \left| \Ex{G_2AN\dg{\tilde{S}_\mu}} \right|  \prec 1 +\rho_2 \frac{\Lambda_{k+1}}{\sqrt{NL}} +\rho_2 \frac{\Lambda_{k+4}}{N L} \prec 1 +\rho_2 \frac{\Lambda_{k+4}}{\sqrt{NL}}.
\end{equation}
Hence we get
\begin{align*}
    |\Ex{G_1G_2A}| \prec  \frac{\Lambda_k}{L\sqrt{\eta_*}} + \frac{1}{L\sqrt{\eta_*}}\left(1 + \frac{\Lambda_{k+4}}{\sqrt{NL}}\right) + \frac{\Lambda_{k+3}}{L^2} \prec \frac{\Lambda_k}{L \sqrt{\eta_*}} + \frac{\Lambda_{k+4}}{L^2}.
\end{align*}



\end{proof}
\subsection{Bounds on $\Ex{GAGA}$}

\begin{lemma}\label{lem:GAGAgeneral}
Suppose $M$ is a traceless matrix with $\|M\| \prec 1$. For $A_1, A_2 \in \mathbb{M}_k^\circ$, we have
\begin{equation}
\begin{aligned}
    |\Ex{G_1 A_1 G_2 A_2} | & \prec 1 + \frac{\Lambda_{k+4}^2}{\sqrt{L}}, \\
    |\Ex{\Im G_1 A_1 G_2 A_2} | & \prec \rho_1\left(1 + \frac{\Lambda_{k+4}^2}{\sqrt{L}}\right), \\
    |\Ex{\Im G_1 A_1 \Im G_2 A_2} | & \prec \rho_1\rho_2\left(1 + \frac{\Lambda_{k+4}^2}{\sqrt{L}}\right).
\end{aligned}
\end{equation}

\end{lemma}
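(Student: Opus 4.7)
The plan is to imitate the structure of the proof of Lemma \ref{lem:GAGAdiag}, since the algebraic identity \eqref{eq:G1A1G2A2diag} holds verbatim for non-diagonal $A_1,A_2$. The diagonal-case proof used the diagonal structure of the matrices at exactly three points, and each of these has to be revisited with the tools we have built for the general case.

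First I would start from \eqref{eq:G1A1G2A2diag} and handle the three ``new'' contributions as follows. (a) The term $m_1\Ex{A_1(G_2-m_2 I)A_2}$, which in the diagonal case was bounded by $1/\sqrt{L}$ via the entrywise local law, is now treated by cycling the trace to $\Ex{(G_2-m_2)A_2 A_1}$, splitting $A_2A_1 = (A_2A_1)^\circ + \Ex{A_2A_1} I$ so that $(A_2A_1)^\circ \in \mathbb M_{k+1}^\circ$, and invoking Lemma \ref{lem:bndGAgeneral} together with $|\Ex{G_2-m_2}|\prec 1/(N\eta_2)$; this produces a bound of order $\rho_2\Lambda_{k+4}/\sqrt{NL}+1/(N\eta_2)$, which is absorbed into $1+\Lambda_{k+4}^2/\sqrt{L}$. (b) The traces $\Ex{G_2 A_2 N\dg\tilde S_\mu}$ in the $\mu$-sum are estimated by writing $G_2=(G_2-m_2 I)+m_2 I$ and applying Lemma \ref{lem:bndGAgeneral} to the centered part of $A_2 N\dg\tilde S_\mu\in \mathbb M_{k+1}$, giving a bound $\prec 1+\rho_2\Lambda_{k+4}/\sqrt{NL}$. (c) The renormalized term $\Ex{\underline{WG_1 A_1 G_2 A_2}}$ is handled directly by Lemma \ref{lem:underline}, contributing $\Lambda_k^2/\sqrt{L}$.

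Next I would specialize $A_2 = N\dg\tilde S^\circ_\nu$ in \eqref{eq:G1A1G2A2diag} and derive a linear system of exactly the same shape as \eqref{eq:G1A1G2S}, namely
\begin{equation*}
(I-C)\Ex{G_1 A_1 G_2 N\dg\tilde S^\circ} = m_1 m_2\,\Ex{A_1 N\dg\tilde S^\circ}+\mathcal O_\prec\!\left(1+\frac{\Lambda_{k+4}^2}{\sqrt L}\right),
\end{equation*}
where $C_{\nu\mu}=m_1 m_2(S_{\nu\mu}-1/N)+O_\prec(N^{-1}\eta^{-1})\delta_{\nu\mu}+O_\prec(N^{-3/2}\eta^{-1/2})$, the last two errors being the same as in the diagonal case because the inner matrices $N\dg\tilde S^\circ_\nu$ are diagonal. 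By Lemma \ref{lem:matrix-inversion} the operator $I-C$ is invertible with $l^2\to l^2$ norm bounded by a constant, so one obtains
\begin{equation*}
\max_\mu |\Ex{G_1 A_1 G_2 N\dg\tilde S^\circ_\mu}| \prec 1+\frac{\Lambda_{k+4}^2}{\sqrt L}.
\end{equation*}

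I would then feed this estimate back into \eqref{eq:G1A1G2A2diag} for a general $A_2\in \mathbb M_k^\circ$. The dominant contribution is the sum $\tfrac{m_1}{N}\sum_\mu \Ex{G_1A_1G_2 N\dg\tilde S^\circ_\mu}\Ex{G_2 A_2 N\dg\tilde S_\mu}$, controlled by combining the bound just obtained with item (b) above; the remaining terms in \eqref{eq:G1A1G2A2diag} are absorbed using item (a), item (c), Cauchy--Schwarz for $\Ex{G_1A_1 G_2}$ (which yields the gain $1/\sqrt{\eta_1\eta_2}$), Lemma \ref{lem:bndGAgeneral} for $\Ex{G_2 A_2}$, and the factor $1+\Ex{G_1-m_1}=1+O_\prec(1/(N\eta_1))$ is moved to the left. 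This yields the first inequality. The two inequalities involving $\Im G$ follow the same pattern: each insertion of $\Im G_i$ replaces a constant by the factor $\rho_i$, either through $\Ex{\Im G_i}\sim \rho_i$ or through the identity $1/(N\eta_i)=\rho_i/L$, exactly as noted in Remark \ref{imaginary-rho}.

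The main obstacle I foresee is bookkeeping: at each use of Lemma \ref{lem:bndGAgeneral} we pay an extra $+3$ in the index of $\Lambda$, and composing matrices within $\mathbb M_k$ doubles the depth; one has to check that the final exponent never exceeds $k+4$ uniformly in the many subexpressions appearing in \eqref{eq:G1A1G2A2diag}. The analysis of the imaginary-part variants is a slightly tedious but mechanical adaptation once this indexing is set up, because all the new cross-terms (coming from $\Im G_1 = (G_1-G_1^*)/(2\mathrm i)$) are of the same structural form as the ones treated in the diagonal case and accept the same estimates with an additional factor of $\rho_1$ or $\rho_2$.
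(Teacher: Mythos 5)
Your overall strategy matches the paper's: start from \eqref{eq:G1A1G2A2diag}, specialize $A_2 = N\dg\tilde S^\circ_\nu$ to build the linear system, invert $I-C$ via Lemma \ref{lem:matrix-inversion}, and feed the resulting bound on $\Ex{G_1 A_1 G_2 N\dg\tilde S^\circ_\mu}$ back into the general-$A_2$ identity. Your items (a), (b), (c) — cycling and centering $A_2A_1$, writing $G_2 = (G_2 - m_2) + m_2$ with Lemma \ref{lem:bndGAgeneral} applied to $(A_2 N\dg\tilde S_\mu)^\circ$, and invoking Lemma \ref{lem:underline} — reproduce precisely the paper's \eqref{eq:GAGAerror4}, \eqref{eq:GAGAerror2}, \eqref{eq:GAGAerror5}.

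However, there is a genuine gap in your treatment of the terms $m_1\Ex{G_1 A_1 G_2}\Ex{G_2 A_2}$ and of those involving $\Ex{G_1 A_1 G_2 A_2 N\dg\tilde S_\mu}$ and $\Ex{G_1 A_1 G_2 N\dg\tilde S^\circ_\nu N\dg\tilde S_\mu}$. You propose to bound $\Ex{G_1 A_1 G_2}$ by Cauchy--Schwarz, which gives $|\Ex{G_1 A_1 G_2}| \prec \sqrt{\rho_1\rho_2/(\eta_1\eta_2)} \lesssim N\rho_1\rho_2/L$; that is a large prefactor, not a gain, since Cauchy--Schwarz discards the tracelessness of $A_1$. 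Multiplying by $|\Ex{G_2 A_2}|\prec \rho_2\Lambda_k/\sqrt{NL}$ from Lemma \ref{lem:bndGAgeneral} yields something of order $\sqrt N\,\rho_1\rho_2^2\Lambda_k/L^{3/2}$, which greatly exceeds the target $\Lambda_{k+4}^2/\sqrt L$ when $L\sim N^\epsilon$ and $\rho_i \sim 1$. The paper closes this by using the sharper bound $|\Ex{G_1 A_1 G_2}| = |\Ex{G_2 G_1 A_1}|\prec \Lambda_k/(L\sqrt{\eta_*}) + \Lambda_{k+4}/L^2$ from Lemma \ref{GGA}, which produces a contribution $\prec \Lambda_{k+4}^2/L^2$ in \eqref{eq:GAGAerror3}. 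The four-matrix traces $\Ex{G_1 A_1 G_2 A_2 N\dg\tilde S_\mu}$ in the $\mu$-sum with $\Ex{N\dg\tilde S^\circ_\mu G_1}$ likewise cannot be handled by Cauchy--Schwarz; the paper controls them via the auxiliary Lemma \ref{lem:genGAGB}, whose proof again rests on Lemma \ref{GGA}. So the missing ingredient in your plan is Lemma \ref{GGA}: it must replace Cauchy--Schwarz both for the three-factor trace $\Ex{G_1 A_1 G_2}$ and, through Lemma \ref{lem:genGAGB}, for the four-matrix traces, in both the specialized system and the feedback step.
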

\begin{proof}
We prove the first inequality here. The other two are proved similarly. See Remark \ref{imaginary-rho} for details.

The proof is similar to the proof of Lemma \ref{lem:GAGAdiag}. The only difference is the size of the error terms.

Similarly to \eqref{eq:G1A1G2A2diag}, we have
\begin{equation}\label{eq:G1A1G2A2general}
\begin{aligned}
    \Ex{G_1 A_1 G_2 A_2} &= m_1 m_2 \Ex{A_1 A_2} + m_1 \Ex{A_1(G_2 - m_2 I)A_2} - m_1 \Ex{\underline{ W G_1 A_1 G_2 A_2}} \\
    &+\frac{m_1}{N}\sum_\mu  \Ex{ N \dg\tilde{S}^{\circ}_{\mu} G_1} \Ex{G_1 A_1 G_2 A_2 N\dg \tilde{S}_\mu} + m_1\Ex{G_1 - m_1} \Ex{G_1A_1G_2A_2} \\
    &+\frac{m_1}{N} \sum_{\mu} \Ex{G_1 A_1 G_2 N\dg\tilde{S}_{\mu}^\circ} \Ex{G_2 A_2 N \dg\tilde{S}_{\mu}} + m_1 \Ex{G_1 A_1 G_2} \Ex{G_2 A_2}.
\end{aligned}
\end{equation}

Now we plug in $A_2 = N\dg{\tilde{S}^\circ_\nu}$ into the identity above and get a system of equations.

\begin{equation}\label{eq:G1A1G2Sgeneral}
\begin{aligned}
    \Ex{G_1 A_1 G_2 N\dg{\tilde{S}^\circ_\nu}} &= m_1 m_2 \Ex{A_1 N\dg{\tilde{S}^\circ_\nu}} + m_1 \Ex{A_1(G_2 - m_2 I)N\dg{\tilde{S}^\circ_\nu}} \\&- m_1 \Ex{\underline{ W G_1 A_1 G_2 N\dg{\tilde{S}^\circ_\nu}}} + m_1 \Ex{G_1 A_1 G_2} \Ex{G_2 N\dg{\tilde{S}^\circ_\nu}}\\
    &+ \frac{m_1}{N}\sum_\mu  \Ex{ N \dg\tilde{S}^{\circ}_{\mu} G_1} \Ex{G_1 A_1 G_2 N\dg{\tilde{S}^\circ_\nu} N\dg\tilde{S}_\mu}  \\
    &+ \sum_{\mu} C_{\nu \mu} \Ex{G_1 A_1 G_2 N\dg\tilde{S}_{\mu}^\circ},
\end{aligned}
\end{equation}
where
\begin{equation}
\begin{aligned}
    C_{\nu \mu} &= m_1\Ex{G_1 - m_1} \delta_{\nu \mu} + m_1 \frac{1}{N} \Ex{G_2 N\dg{\tilde{S}^{\circ}_{\nu}} N\dg{\tilde{S}_{\mu}} } \\
    &= m_1 m_2 \left(S_{\nu \mu} - \frac{1}{N}\right) + \mathcal{O}\left(\frac{1}{N^{3/2} \eta_*^{1/2}}\right) + \mathcal{O}\left(\frac{1}{N\eta_1}\right)\delta_{\nu \mu}.
\end{aligned}
\end{equation}

To bound the error terms in \eqref{eq:G1A1G2Sgeneral} we need the following lemma.
\begin{lemma} \label{lem:genGAGB}
If $A \in \mathbb M_k^\circ$ and $B\in \mathbb M_l$, then 
\begin{equation}
\label{eq:nondA-gagb-1}
\begin{aligned}
     &|\langle G_1AG_2B \rangle| \prec \Lambda_k\Lambda_l +  \frac{\Lambda_k}{L \sqrt{\eta_*}} + \frac{\Lambda_{k+4}}{L^2},\,\\
     &|\langle \Im G_1 A G_2 B \rangle| \prec \rho_1 \Lambda_k \Lambda_l + \rho_1 \frac{\Lambda_k}{L \sqrt{\eta_*}} +\rho_1 \frac{\Lambda_{k+4}}{L^2},\\
     &|\langle \Im G_1 A \Im G_2 B \rangle| \prec \rho_1 \rho_2 \Lambda_k \Lambda_l + \rho_1 \rho_2 \frac{\Lambda_k}{L \sqrt{\eta^*}}  + \rho_1 \rho_2 \frac{\Lambda_{k+4}}{L^2}.
\end{aligned}
\end{equation}
Furthermore, for $B = N\dg \tilde{S}_\nu^\circ N\dg \tilde{S}_\mu$, 
\begin{equation}
\label{eq:nondA-gagb-2}
\begin{aligned}
     & |\langle G_1AG_2B \rangle| \prec \Lambda_k +  \frac{\Lambda_k}{L \sqrt{\eta_*}} + \frac{\Lambda_{k+4}}{L^2},\,\\
     &|\langle \Im G_1 A G_2 B \rangle| \prec \rho_1 \Lambda_k + \rho_1 \frac{\Lambda_k}{L \sqrt{\eta_*}} + \frac{\Lambda_{k+4}}{L^2},\\
     & |\langle \Im G_1 A \Im G_2 B \rangle| \prec \rho_1 \rho_2 \Lambda_k + \rho_1 \rho_2 \frac{\Lambda_k}{L \sqrt{\eta_*}} + \rho_1 \rho_2 \frac{\Lambda_{k+4}}{L^2}.
\end{aligned}
\end{equation}
\end{lemma}
\begin{proof}
Let us divide the matrix $B = B^\circ  + \langle B \rangle I$, where $B^\circ \in  \mathbb M_l^\circ$ by the definition. Then, we have that
\begin{equation}
    \langle G_1AG_2B \rangle =  \langle G_1AG_2B^\circ \rangle +  \langle B \rangle \langle G_2 G_1A\rangle\,.
\end{equation}
Now, \eqref{eq:nondA-gagb-1} follows from \cite[(5.34)]{cipolloni-erdos-schroder-2021}, Lemma \ref{GGA}, and equation \eqref{eq:nondA-gagb-2}. 
\end{proof}

Then 
\begin{equation}\label{eq:GAGSgeneral-error1}
\left|\Ex{ N \dg\tilde{S}^{\circ}_{\mu} G_1} \Ex{G_1 A_1 G_2 N\dg \tilde{S}_\nu^\circ N\dg \tilde{S}_\mu}\right| \prec  \frac{\rho_1}{\sqrt{NL}} \left( \Lambda_k + \frac{\Lambda_k}{L \sqrt{\eta_*}} + \frac{\Lambda_{k+4}}{L^2}\right).
\end{equation}
and
\begin{equation}\label{eq:GAGSgeneral-error2}
    \left|\Ex{G_1 A_1 G_2} \Ex{G_2 N\dg \tilde{S}_\nu^\circ}\right| \prec  \left(\frac{\Lambda_k}{L \sqrt{\eta_*}} + \frac{\Lambda_{k+4}}{L^2}\right) \frac{\rho_2}{\sqrt{NL}}\,.
\end{equation}

Using Lemma \ref{lem:bndGAgeneral}, we get
\begin{equation}\label{eq:GAGAerror4}
\begin{aligned}
    \left|\Ex{A_1(G_2 - m_2 I)N \dg\tilde{S}^{\circ}_{\nu}}\right| &= \left|\Ex{G_2-m_2}\Ex{A_1N \dg\tilde{S}^{\circ}_{\nu}} + \Ex{(G_2 - m_2)(N \dg\tilde{S}^{\circ}_{\nu}A_1)^\circ} \right| \\
    &\prec \frac{1}{N\eta_2} + \frac{\rho_2\Lambda_{k+1}}{\sqrt{NL}} + \frac{\rho_2\Lambda_{k+4}}{NL} \prec  \frac{\rho_2}{L} + \frac{\rho_2\Lambda_{k+4}}{\sqrt{NL}}.
\end{aligned}
\end{equation}

From Lemma \ref{lem:underline}, we have
\begin{equation}
    \left|\Ex{\underline{ W G_1 A_1 G_2 N\dg{\tilde{S}^\circ_\nu}}}\right| \prec \frac{\Lambda_k}{\sqrt{L}}.
\end{equation}

Then from \eqref{eq:G1A1G2Sgeneral}, we get
\begin{equation} \label{eq:1-CGAGS}
\begin{aligned}
    \left(I - C \right)\Ex{G_1 A_1 G_2 N\dg{\tilde{S}^\circ}} &= m_1 m_2 \Ex{A_1 N\dg{\tilde{S}^\circ_\nu}} +  \mathcal{O}_{\prec}\bigg( \frac{1}{L} + \frac{\Lambda_k}{\sqrt{L} } + \frac{\Lambda_{k+4}}{\sqrt{NL}} \\
    &+\frac{1}{\sqrt{NL}} \left( \Lambda_k  +  \frac{\Lambda_k}{L\sqrt{\eta_*}} + \frac{\Lambda_{k+4}}{L^2}\right)\bigg) \\
    &= m_1 m_2 \Ex{A_1 N\dg{\tilde{S}^\circ_\nu}} + \mathcal{O}_{\prec}\bigg(\frac{\Lambda_{k+4}}{\sqrt{L} }\bigg).
\end{aligned}
\end{equation}

By inverting matrix $I - C$ via a similar argument found in the proof of Lemma \ref{lem:bndGA}, we get
\begin{equation}\label{eq:GAGSerror}
\begin{aligned}
    \left|\Ex{G_1 A_1 G_2 N\dg{\tilde{S}^\circ}}\right| & \prec 1+ \frac{\Lambda_{k+4}}{\sqrt{L} }.
\end{aligned}
\end{equation}

We now return to our bound of $\langle G_1A_1 G_2 A_2 \rangle $.
At this point, we can substitute our bounds in equation \eqref{eq:GAGSerror} into equation \eqref{eq:G1A1G2A2general}.

We have some other error terms to deal with in \eqref{eq:G1A1G2A2general}. By using Lemma \ref{lem:genGAGB}, we have,
\begin{equation}\label{eq:GAGAerror1}
\begin{aligned}
\left|\Ex{ N \dg\tilde{S}^{\circ}_{\mu} G_1} \Ex{G_1 A_1 G_2 A_2 N\dg \tilde{S}_\mu}\right| &\prec  \frac{\rho_1}{\sqrt{NL}} \left( \Lambda_k \Lambda_{k+1} +  \frac{\Lambda_k}{L\sqrt{\eta_*}} + \frac{\Lambda_{k+4}}{L^2}\right) \\
& \prec \frac{\Lambda_{k+4}^2}{\sqrt{L}} .
\end{aligned}
\end{equation}

We use \eqref{eq:GAGSerror} and Lemma \ref{lem:bndGAgeneral} to get
\begin{equation}\label{eq:GAGAerror2}
\begin{aligned}
    &\left|\Ex{G_1 A_1 G_2 N\dg\tilde{S}_{\mu}} \Ex{G_2 A_2 N \dg\tilde{S}_{\mu}}\right| \\
    &\prec \left|\Ex{G_1 A_1 G_2 N\dg\tilde{S}_{\mu}}\right| \left(\Ex{G_2}\left|\Ex{A_2 N \dg\tilde{S}_{\mu}}\right| + \left|\Ex{G_2 \left(A_2 N \dg\tilde{S}_{\mu}\right)^\circ}\right|\right) \\
    &\prec \left( 1 +\frac{\Lambda_{k+4}}{\sqrt{L} }\right) \left(1 + \frac{\Lambda_{k+1}}{\sqrt{NL}} + \frac{\Lambda_{k+4}}{NL}\right) \prec \left( 1 + \frac{\Lambda_{k+4}}{\sqrt{L} }\right)\left(1 +\frac{\Lambda_{k+4}}{\sqrt{NL}}\right) \\
   & \prec 1 +  \frac{\Lambda_{k+4}^2}{\sqrt{L}}.
\end{aligned}
\end{equation}

Using Lemma \ref{GGA} and Lemma \ref{lem:bndGAgeneral}, we get
\begin{equation}\label{eq:GAGAerror3}
\begin{aligned}
    \left|\Ex{G_1 A_1 G_2} \Ex{G_2 A_2}\right| & \prec  \left( \frac{\Lambda_k}{L\sqrt{\eta_*}} + \frac{\Lambda_{k+4}}{L^2} \right) \left(\frac{\Lambda_k}{\sqrt{NL}} + \frac{\Lambda_{k+3}}{N L} \right) 
    \prec \frac{\Lambda_{k+4}^2}{L^2}\,.
\end{aligned}
\end{equation}

Using Lemma \ref{lem:bndGAgeneral}, we get
\begin{equation}\label{eq:GAGAerror4}
\begin{aligned}
    \left|\Ex{A_1(G_2 - m_2 I)A_2}\right| &\le \left|\Ex{G_2-m_2}\Ex{A_1A_2}\right| + \left|\Ex{(G_2 - m_2)(A_2A_1)^\circ}\right| \\
    &\prec \frac{1}{L} + \frac{\Lambda_{k+1}}{\sqrt{LN}} + \frac{\Lambda_{k+4}}{NL} \prec \frac{1}{L} + \frac{\Lambda_{k+4}^2}{\sqrt{NL}} .
\end{aligned}
\end{equation}

From Lemma \ref{lem:underline}, we have
\begin{equation}\label{eq:GAGAerror5}
    \left|\Ex{\underline{ W G_1 A_1 G_2 A_2}}\right| \prec \frac{\Lambda_k^2}{\sqrt{L}}.
\end{equation}

Now we use bounds \eqref{eq:GAGAerror1}, \eqref{eq:GAGAerror2}, \eqref{eq:GAGAerror3}, \eqref{eq:GAGAerror4}, \eqref{eq:GAGAerror5} in \eqref{eq:G1A1G2A2general}, we have
\begin{equation}
\begin{split}
        \left[1 - \mathcal{O}_\prec\left(\frac{1}{N\eta_1}\right)\right] \left|\Ex{G_1A_1G_2A_2}\right| &\prec 1 + \frac{\Lambda_
        {k+4}^2}{\sqrt{L}} 
\end{split}
\end{equation}

We can divide by $1-\mathcal{O}_\prec\left(\frac{1}{N\eta_1} \right)$ on both sides to derive our result.
\end{proof}

\subsection{Bounds on $\Ex{\Im GA \Im G^tA}$}

\begin{lemma}
\label{lem:ImGAImGtAgeneral}
Suppose $M$ is a diagonal traceless matrix with $\|M\| \prec 1$.
If $A_1,A_2 \in \mathbb M_k$, then 
\begin{equation}
  \left|\Ex{\Im G_1 A_1 \Im G_2^t A_2}\right| \prec \rho_1\rho_2\left(1 +  \frac{\Lambda_{k+3}^2}{\sqrt{L}}\right)\,.
\end{equation}
\end{lemma}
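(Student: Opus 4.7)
The plan is to follow the blueprint of Lemma \ref{GAGtA} (the diagonal analogue), but replace every step that exploited diagonality of the $A_i$ by the sharper one-resolvent bound of Lemma \ref{lem:bndGAgeneral} together with the general two-resolvent estimates of Lemmas \ref{GGA} and \ref{lem:GAGAgeneral}. First I would derive the analogue of the master identity \eqref{eq:Gtcase}: apply the resolvent identity $G_1 = m_1 I - m_1 WG_1 - m_1^2 G_1$ twice (once to $G_1$ and once to the conjugate that produces $\Im G_1$), then replace each $WG_1$ by its renormalization from Definition \ref{defn:Renorm}. This splits $\Ex{\Im G_1 A_1 \Im G_2^t A_2}$ into a deterministic piece $\Im m_1 \Im m_2 \Ex{A_1 A_2}$, a single‑resolvent remainder $\Im m_1 \Ex{A_1(\Im G_2^t - \Im m_2 I)A_2}$, two underlined terms, trace‑product terms coming from the square‑root decomposition $S_{ij}=\sum_\mu \tilde S_{i\mu}\tilde S_{\mu j}$ applied to the $S$-weighted expressions produced by the integration by parts, plus the characteristic "twisted" terms produced by differentiating $G_2^t$ with respect to $W_{ij}$.

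The bookkeeping then proceeds term by term. The renormalized terms are controlled directly by Lemma \ref{lem:underline}, yielding bounds of size $\rho_1\rho_2 \Lambda_{k+1}^2/\sqrt{L}$ or better. The single-resolvent remainder $\Ex{A_1(\Im G_2^t - \Im m_2 I)A_2}$ is decomposed into $\Ex{\Im G_2^t - \Im m_2}\Ex{A_1 A_2} + \Ex{(\Im G_2^t - \Im m_2)(A_2 A_1)^\circ}$, and then the second piece is handled by Lemma \ref{lem:bndGAgeneral} applied to $\Im G_2$ (via $(A_2 A_1)^\circ \in \mathbb{M}_{k+1}^\circ$), giving $\rho_1\rho_2(1/L + \Lambda_{k+3}/\sqrt{NL})$. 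The trace-product terms are of the form $\Ex{G_1 N\dg\tilde S_\mu^\circ}\Ex{G_1 A_1 \Im G_2^t A_2 N\dg\tilde S_\mu}$ and its variants; the first factor is bounded by Lemma \ref{lem:bndGAgeneral} with gain $\rho_1/\sqrt{NL}$, and the second by the trivial Cauchy–Schwarz bound giving $\sqrt{\rho_1\rho_2}/\sqrt{\eta_1\eta_2}\cdot\Lambda_{k+1}^2 \prec N\rho_1\rho_2 \Lambda_{k+1}^2/L$, which combines to the target order.

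The genuinely delicate part — and the main obstacle — is the family of twisted terms
\begin{equation*}
\frac{m_1}{N^2}\sum_\mu \Ex{\Im G_1 A_1 G_2^t N\dg\tilde{S}_\mu A_2^t \Im G_2 N\dg\tilde{S}_\mu}
\end{equation*}
and its three siblings obtained by replacing $G_2^t$ or $\Im G_2$ by their conjugates. Unlike in the diagonal case, here the matrices sandwiched between the two resolvents are not in $\mathbb M_l$ but rather in $\mathbb M_{l+2}$ (they are products of $A_i$ with diagonal matrices $N\dg\tilde S_\mu$). I would treat them by first noting the sum over $\mu$ of $(N\dg\tilde S_\mu)\otimes (N\dg\tilde S_\mu)$ is essentially the entrywise covariance tensor, so after Cauchy–Schwarz they reduce to two expectations of $\Ex{\Im G A \Im G^* A^*}$-type quantities. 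Each such quantity is controlled by the non-transposed Lemma \ref{lem:GAGAgeneral}, which produces bounds of order $\rho_1\rho_2(1 + \Lambda_{k+4}^2/\sqrt L)$; with the $1/N$ prefactor from $1/N^2$ times the $N$-fold sum the twisted contribution becomes acceptable at the level $\rho_1\rho_2 \Lambda_{k+3}^2/\sqrt L$.

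Collecting all the contributions and absorbing the $1 + \mathcal O_\prec(1/N\eta_1)$ factor on the left, one obtains the claimed bound. Throughout, the key mechanism that keeps the power of $\Lambda$ at $\Lambda_{k+3}^2$ rather than growing further is that whenever Lemma \ref{lem:bndGAgeneral} is used on $\Ex{G B}$ with $B \in \mathbb M_{k+1}$ or $\mathbb M_{k+2}$, it injects at most three additional indices of $\mathbb M$, and this index inflation happens only through the one-resolvent remainder and the traceless-part decomposition of the sandwiched matrices — never twice simultaneously.
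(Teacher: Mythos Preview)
Your overall architecture matches the paper's: derive the master identity \eqref{eq:Gtcase}/\eqref{eq:Gtcase-general}, then bound each term. You also correctly identify the one place where something new is needed compared with the diagonal Lemma~\ref{GAGtA}: the single-resolvent remainder $\Im m_1\Ex{A_1(\Im G_2^t-\Im m_2)A_2}$ must be decomposed as $\Ex{\Im G_2^t-\Im m_2}\Ex{A_2A_1}+\Ex{\Im G_2^t(A_2A_1)^\circ}$ and the second piece handled by Lemma~\ref{lem:bndGAgeneral}. That is exactly what the paper does, and it is the only genuinely new ingredient.

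Where you diverge from the paper is in the two multi-resolvent families, and both detours are unnecessary and not quite right as stated. For the trace-product terms $\Ex{G_1 N\dg\tilde S_\mu^\circ}\Ex{G_1^* A_1 \Im G_2^t A_2 N\dg\tilde S_\mu}$, your ``trivial Cauchy--Schwarz'' bound $\sqrt{\rho_1\rho_2}/\sqrt{\eta_1\eta_2}\cdot\Lambda_{k+1}^2$ for the second factor is a hybrid that does not arise from either the norm bound or the $\Lambda$-bound; used literally it misses the target by a factor $\sqrt N\rho_1/L$. The paper simply invokes the two-resolvent a~priori estimate (5.34) of \cite{cipolloni-erdos-schroder-2021}, which gives the second factor as $\rho_2\Lambda_{k+1}^2$ directly and combines with the diagonal one-resolvent bound on the first factor to yield $\rho_1\rho_2\Lambda_{k+1}^2/\sqrt{NL}$.

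For the twisted three-resolvent terms $\tfrac{1}{N^2}\sum_\mu\Ex{\Im G_1 A_1 G_2^t(N\dg\tilde S_\mu)A_2^t\Im G_2(N\dg\tilde S_\mu)}$, your proposed Cauchy--Schwarz followed by Lemma~\ref{lem:GAGAgeneral} does not close: after Cauchy--Schwarz one of the two factors still contains three (or more) resolvents rather than two, so Lemma~\ref{lem:GAGAgeneral} does not apply; moreover the sandwiched matrices are not traceless, which that lemma requires. The paper sidesteps this entirely by citing the three-resolvent a~priori bound (5.35) of \cite{cipolloni-erdos-schroder-2021}, exactly as in the diagonal proof, to get each summand bounded by $\rho_1\rho_2\Lambda_k\Lambda_{k+1}/\sqrt{\eta_1\eta_2}$, hence a total contribution $\prec\rho_1\rho_2\Lambda_{k+1}^2/L$. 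In short, no new two- or three-resolvent machinery is required here; the entire proof of Lemma~\ref{lem:ImGAImGtAgeneral} is just Lemma~\ref{GAGtA} verbatim, with the sole change being Lemma~\ref{lem:bndGAgeneral} in place of the local-law argument for the $\Ex{A_1(\Im G_2^t-\Im m_2)A_2}$ term.
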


\begin{proof} The proof is almost identical to the proof of Lemma \ref{GAGtA}. We start by using the identity 

\begin{equation}
\label{eq:Gtcase-general}
\begin{split}
    \Ex{\Im G_1 A_1 \Im G_2^t A_2}  
    &= \Im m_1 \Im m_2 \Ex{A_1 A_2} + \Im m_1 \Ex{A_1(\Im G_2^t - \Im m_2 I)A_2}\\& - m_1 \Ex{\underline{ W \Im G_1 A_1 \Im G_2^t A_2}} - \Im m_1 \Ex{\underline{W G_1^* A_1 \Im G_2^t A_2}} \\
    &+\frac{m_1}{N}\sum_\mu  \Ex{ N \dg\tilde{S}^{\circ}_{\mu} G_1} \Ex{\Im G_1 A_1 \Im G_2^t A_2 N\dg(\tilde{S}_\mu)} \\ & + \frac{m_1}{N} \sum_{\mu}  \Ex{ N \dg\tilde{S}^{\circ}_{\mu} \Im G_1} \Ex{G_1^* A_1 \Im G_2^t A_2 N\dg(\tilde{S}_\mu)}\\
    &+ \frac{\Im m_1}{N}\sum_\mu  \Ex{ N \dg\tilde{S}^{\circ}_{\mu} G_1^*} \Ex{ G_1^* A_1 \Im G_2^t A_2 N\dg(\tilde{S}_\mu)}\\
    &+ m_1 \Ex{G_1 - m_1} \Ex{\Im G_1A_1\Im G_2^tA_2}+ m_1 \Ex{\Im G_1 - \Im m_1} \Ex{G_1^* A_1\Im G_2^tA_2} \\
    &+ \Im m_1 \Ex{ G_1^* - \overline{m_1}} \Ex{G_1^* A_1\Im G_2^tA_2}\\
&+\frac{m_1}{N^2}\sum_\mu  \Ex{\Im G_1A_1 G^t_2 N\dg(\tilde{S}_\mu)A_2^t \Im G_2 N\dg(\tilde{S}_\mu)}\\&+\frac{m_1}{N^2}\sum_\mu \Ex{\Im G_1A_1\Im G^t_2 N\dg(\tilde{S}_\mu)A_2^t G_2^* N\dg(\tilde{S}_\mu)}\\
&+\frac{\Im m_1}{N^2}\sum_\mu  \Ex{ G_1^*A_1 G^t_2 N\dg(\tilde{S}_\mu)A_2^t \Im G_2 N\dg(\tilde{S}_\mu)}\\&+\frac{\Im m_1}{N^2}\sum_\mu \Ex{G_1^* A_1\Im G^t_2 N\dg(\tilde{S}_\mu)A_2^t G_2^* N\dg(\tilde{S}_\mu)}.
\end{split}
\end{equation}
Like in the proof of Lemma \ref{GAGtA} we use \cite[(5.34),(5.35)]{cipolloni-erdos-schroder-2021} to get
\begin{equation*}
    \begin{split}
        &\left| \Ex{ N \dg\tilde{S}^{\circ}_{\mu} \Im G_1} \Ex{G_1^* A_1 \Im G_2^t A_2 N\dg(\tilde{S}_\mu)}\right| \prec \frac{\sqrt{\rho_1}}{N\sqrt{\eta_1}}\cdot \rho_2\Lambda_{k+1}^2 \prec \frac{\rho_1\rho_2\Lambda_{k+1}^2}{\sqrt{NL}}\,,\\
    & \left| \Ex{\Im G_1A_1\Im G^t_2 N\dg(\tilde{S}_\mu)A_2^t G_2^* N\dg(\tilde{S}_\mu)}  \right| \prec \frac{\rho_1\rho_2\Lambda_k \Lambda_{k+1}}{\sqrt{\eta_1\eta_2}} \prec \frac{\rho_1\rho_2\Lambda_{k+1}^2}{L},\\
    \end{split}
\end{equation*}

The second term of \eqref{eq:Gtcase-general} is estimated differently from the diagonal case:
\begin{equation}
\begin{aligned}
     |\Im m_1\Ex{A_1(\Im G_2^t - \Im m_2 I) A_2}| &\leq \rho_1 |\Ex{\Im G_2^t - \Im m_2 I} \Ex{A_2A_1}| + \rho_1 |\Ex{\Im G_2^t (A_2A_1)^\circ}|\\
     &\prec \frac{\rho_1 \rho_2}{L} + \rho_1 \rho_2 \left(\frac{ \Lambda_k}{\sqrt{NL}} + \frac{\Lambda_{k+3}}{NL}\right)\,.
\end{aligned}
\end{equation}
The bounds for the other terms in \eqref{eq:Gtcase-general} can be obtained similarly. 
\end{proof}

\subsection{Proof of Main Result}

We now have enough results to prove our main Theorem  \ref{thm:Xicontrolparam}. 

\begin{proof}[Proof of Theorem \ref{thm:Xicontrolparam} ]
The result of Lemma \ref{lem:GAGAgeneral} combined with Lemma \ref{GAGA&Xi} shows that,
\begin{equation}
    \Lambda_k^2 \prec 1 + \frac{\Lambda_{k+4}^2}{\sqrt{J}}.
\end{equation}

Starting from $k=1$ and iterating this bound shows that,
\begin{equation}
    \Lambda_1^2 \prec 1 + \frac{\Lambda_{1+4t}^2}{(\sqrt{J})^t}.
\end{equation}
We can choose $t = \lceil 8/\epsilon \rceil$ and apply the trivial bound $\Lambda_{1+4 t} \prec \frac{1}{\eta}$ as well as $J \ge N^{\epsilon}$ to show that $\Lambda_1^2 \prec 1$.

\end{proof}

\section{Proof of Lemma \ref{lem:underline}}
\subsection{Cumulant expansion}

In this section we use cumulant expansion to estimate the moments
\begin{equation}\label{moment}
    \E |\langle \underline{WG_1B_1G_2B_2 \ldots G_lB_l} \rangle|^{2p}.
\end{equation}
Parts of this section were adapted from the paper \cite{cipolloni-erdos-schroder-2021}.

For simplicity we assume that $B_i \in \mathbb{M}_k$ for all $i \in [N]$. It is easy to see from our proof that if $B_i$ are from different families $\mathbb{M}_k$, each $B$ provides the corresponding $\Lambda$ in the bound.

For any $m, n \in \mathbb{Z}_+$ define a $N\times N$ matrix $\kappa^{m,n}$, such that its entries $\kappa^{m,n}_{ab}$ are the joint cumulants of $m$ copies of $w_{ab}$ and $n$ copies of $w_{ba}$. Note that $\kappa^{1,1} = S$ and $\kappa^{2, 0} = 0$.

We use the following cumulant expansion:
\begin{equation}\label{eq:CumulantExpansion}
    \E w_{ab}f(W) = \sum_{k = 1}^R \sum_{m+n=k} \kappa^{m+1, n}_{ab} \E \partial_{ab}^m \partial_{ba}^n f(W) + \Omega_R,
\end{equation}
where $\partial_{ab} = \partial_{w_{ab}}$.

Applying the expansion \eqref{eq:CumulantExpansion} to \eqref{moment} $2p$ times with respect to each $W$ allows us to express the moments \eqref{moment} in terms of Feynman diagrams (Lemma \ref{lem:cumulant}).  We understand that the following definition is quite long, but we will soon give an example that will make these concepts more concrete.

\begin{definition}
Define the class of diagrams $\mathcal{G}$ as follows. Each diagram $\Gamma$ is a graph with two types of vertices $V = V_\kappa \cup V_i$ that are called $\kappa$-vertices and internal vertices and two types of edges $E = E_\kappa \cup E_g$ called $\kappa$-edges and $G$-edges. For any vertex $v \in V$ its $G$-degree $d_g(v)$ is defined as its degree in the graph $(V, E_g)$. Internal vertices $v \in V_i$ satisfy $d_g(v) = 2$ and $\kappa$-vertices can be partitioned $V_\kappa = \bigcup_{k \ge 2} V_\kappa^k$ according to their degree, i.e. $d_g(v) = k$ for $v \in V_\kappa^k$. $\kappa$-edges can be partitioned $e_\kappa = \bigcup_{k \ge 2} E_\kappa^k$ so that any $e \in E_\kappa^k$ connects two vertices from $V_\kappa^k$.

Each $\kappa$-edge $e = (v, w)$ carries labels $r(e)$, $s(e)$ and the value of $\kappa^{r(e), s(e)}_{vw}$. Each edge $e \in E_\kappa^2$ carries an additional label $h(e) \in \{\text{mat}, \text{res}\}$, which will record whether the edge comes from the derivative $\partial_{e}$ hitting a matrix $W$ or a resolvent $G_k$. Each $G$-edge $e$ has labels $i(e), t(e), *(e) \in \{0,1\}$ recording the type of the resolvent $e$ represents (imaginary part, transpose and adjoint respectively). Label $z(e)$ records the parameter of the resolvent. Labels $L(e)$ and $R(e)$ record deterministic matrices that resolvent is multiplied by.  
\end{definition} 

\begin{remark}
In this paper $L(e)$ and $R(e)$ will be products of matrices $B_k$ and $diag( \tilde{S}^{\circ}_\mu)$ defined in Assumption \ref{asmp:squareroot}.
\end{remark}

In addition to the definition of diagrams, we also need to introduce the notion of values associated to each diagram. On an intuitive level, a diagram represents some product of matrix quantities organized in a particular way. The following definition formalizes the exact quantity associated to each diagram.

\begin{definition}
For each $\Gamma \in \mathcal{G}$ and each $e \in V_i$ define the value $\mathcal{G}^e$ of the edge $e$ as the resolvent $L(e)G(z(e))R(e)$ with imaginary part, transpose and $*$ applied according to the labels $i(e), t(e), *(e)$. For each $e\in E_\kappa$ define its value $\mathcal{G}^e$ as $\kappa^{r(e), s(e)}$.

Define the value of the diagram $\Gamma$ as follows.
\begin{equation}
    \Val(\Gamma) = \sum_{a_{v} \in [N], v \in V} \prod_{\{x,y\} \in E} \mathcal{G}^{\{x,y\}}_{a_x,a_y}
\end{equation}
\end{definition}

Here, we construct a few examples of the diagrams that appear after applying the cumulant expansion.
Let us the consider the following terms,
\begin{equation}\label{eq:example-expansion}
\begin{aligned}
    \E\left|\Ex{\underline{W G_1 B_1 G_2 B_2}}\right|^2 &= \E \sum_{a,b} \kappa^{1,1}_{a,b} (G_1 B_1)_{bc} (G_2 B_2)_{ca} (B_2^* G_2^*)_{ad} (B_1^* G_1^*)_{db} \\
    &+ \E\sum_{a,b,c,d} \kappa^{1,1}_{a,b} \kappa^{1,1}_{c,d} (G_1 B_1 G_2)_{bd} (G_2 B_2)_{ca} (B_2^* G_2^* B_1^* G_1^*)_{db} (G_1^*)_{ac} \\
    &+ \E\sum_{a,b,c,d} \kappa^{2,1}_{a,b} \kappa^{1,1}_{c,d} (G_1)_{bd} (G_1 B_1 G_2)_{ca} (G_2 B_2)_{ba} (B_2^* G_2^*)_{db} (G_2^* B_1^* G_1^*)_{ac} \\
    &+ \ldots
\end{aligned}
\end{equation}

Figure \ref{fig:first-term} shows the diagram corresponding to the first term of \eqref{eq:example-expansion}. Each edge has its value written next to it. On the right of Figure \ref{fig:first-term} we show the edge labels in more detail. Figure \ref{fig:second-third-term} shows the diagrams corresponding to the other two terms of \eqref{eq:example-expansion}.

\begin{figure}
    \centering
        \includegraphics{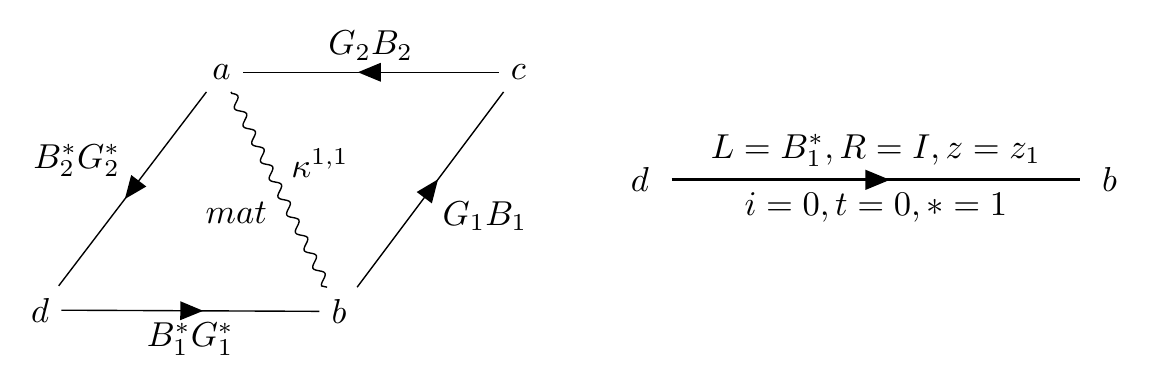}

    \caption{Diagram on the left corresponds to the first term of \eqref{eq:example-expansion}. On the right we show all labels of the edge $\{d,b\}$ with value $B_1^*G_1^*$. }
    \label{fig:first-term}
\end{figure}

\begin{figure}
    \centering
    \includegraphics{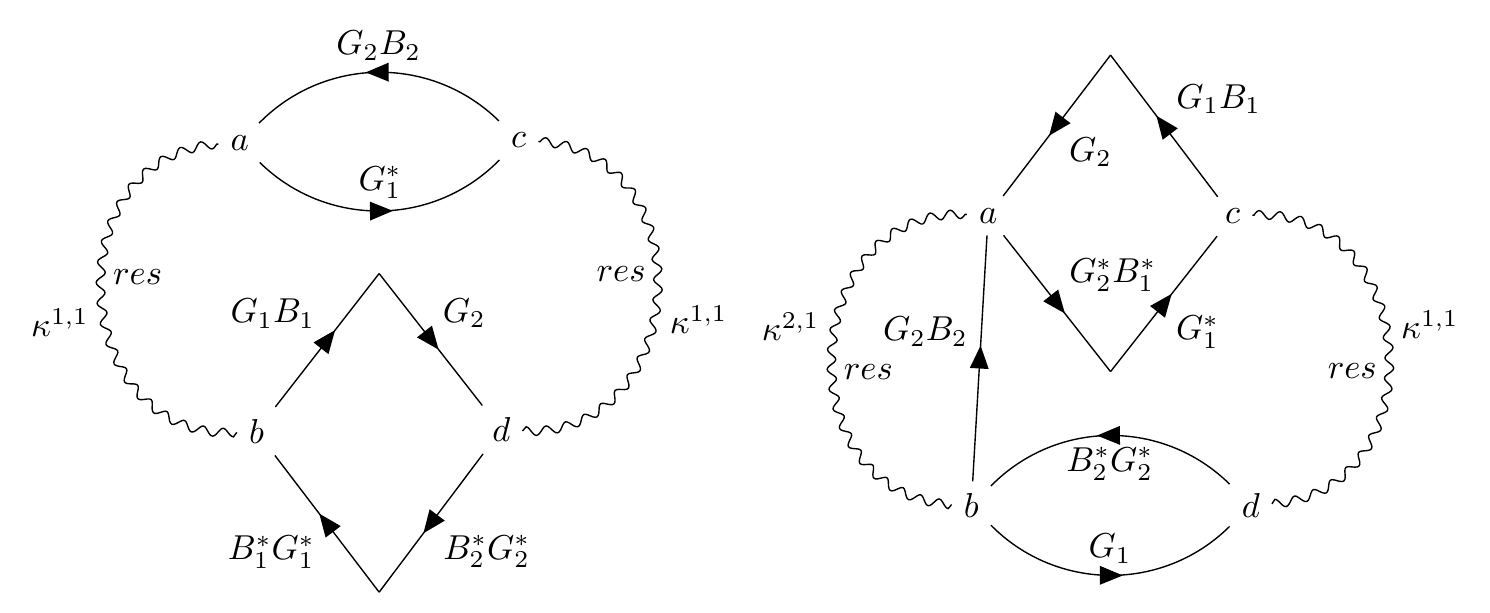}
    
    
 
    

    \caption{The diagrams corresponding to the second and third terms of \eqref{eq:example-expansion}.}
    \label{fig:second-third-term}
\end{figure}


The following definition is similar to the properties (P1)-(P8) in Proposition 5.3 of \cite{cipolloni-erdos-schroder-2021}. The main purpose of the definition is to encompass the properties of the graphs produced by cumulant expansion that are most important for our later counting bound. We remark that most of these properties are mechanical consequences of considering the algorithm of cumulant expansion.

\begin{definition}\label{regular}
A diagram is said to be $(l,p, i,\mathfrak a,\mathfrak t)$-regular if there exist a subset $V_o$ of orthogonality vertices such that the following condition holds:
\begin{enumerate}
    \item The graph $(V_\kappa,E_\kappa)$ is a perfect matching.
    \item The internal vertices satisfies $|V_i| = 2(l-1)p$. The edges satisfy $1 \leq |E_\kappa| \leq 2p$, $\#\{e \in E_g: i(e) =1 \} = 2 ip$, and $|E_g| = \sum_{e \in E_\kappa} d_g(e) + 2(l-1)p \geq 2p$.
    \item For any $\kappa$ edge $(uv) \in E_\kappa^k$, the $G$-degrees of $u,v \in V_\kappa$ satisfy $d^{in}_g(u) = d^{out}_g(v)$, $d^{in}_g(v) = d^{out}_g(u)$, and $d_g(u) = d_g(v) \geq 2$. We can define the $G$-degree of $(uv)$ as $d_g(uv): =d_g(u) = d_g(v) =k$.
    \item Every $E_g$-cycle on $V_\kappa^2 \cup V_i$ contains at least two vertices in $V_\kappa^2$.
    \item Denoting the number of isolated cycles in $(V_\kappa \cup V_i, E_g)$ with at most $k$ vertices in $V_o$ by $n_{cyc}^{o=k}$, we have $2 n_{cyc}^{o=0} + n_{cyc}^{o=1} \leq 2 |E_\kappa^2| - |V_o \cap V_\kappa^2|$.
    \item $|V_i \cap V_o| = 2p(a +t - \mathbf 1\{l \in \mathfrak a \cup  \mathfrak t\})$.
    \item If $l \in \mathfrak a \cup \mathfrak t$, then $2 |E^2_\kappa| + |E^{\geq 3}_\kappa| - 2p \leq |V_o \cap V_\kappa^2| \leq 2p$, otherwise $V_o \cap V_\kappa^2$ is empty.
    \item\label{hist-label-prop} For any $\kappa^{1,1}$ edge $e$, the number of its endpoint in $V_o$ is either $0$ if $h(e) = \text{mat}$, or at most $1$ if $h(e) = \text{res}$.
\end{enumerate}
\end{definition}

With the notion of diagrams in hand, we can now describe the the graph produced by cumulant expansion, which reduces the computations of moments of our renormalization terms to quantities defined on our $(l,p,i,\mathfrak{a},\mathfrak{t})$ regular graphs. 

\begin{lemma}[Cumulant expansion]\label{lem:cumulant}For any $p \in \N$, there is a collection of graphs $\mathcal G^{av}_p$ such that
\begin{align}\label{eq:value-expansion}
    &\E|\Ex{\underline{WG_1B_1...G_lB_l}}|^{2p} = \sum_{\Gamma \in \mathcal G^{av}_p}\E \Val(\Gamma) + O(N^{-2p})\,.
\end{align}
Furthermore, for all diagrams in  $ \mathcal G^{av}_p$ are ``$(l,p, i,\mathfrak a,\mathfrak t)$-regular'' in the sense of Definition \ref{regular}.
\end{lemma}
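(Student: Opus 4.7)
The plan is to adapt the diagrammatic cumulant expansion argument from \cite[Proposition 5.3]{cipolloni-erdos-schroder-2021} to our setting where the covariance matrix $S$ is non-constant. First I would write out
\begin{equation*}
\E|\Ex{\underline{WG_1B_1\cdots G_lB_l}}|^{2p} = \E\prod_{q=1}^{p}\Ex{\underline{WG_1B_1\cdots G_lB_l}}\,\overline{\Ex{\underline{WG_1B_1\cdots G_lB_l}}}
\end{equation*}
and, for each of the $2p$ copies of the renormalized trace, pick one distinguished entry $W_{ab}$ at which to start integrating. Applying the cumulant expansion \eqref{eq:CumulantExpansion} to this chosen $W_{ab}$ produces, at order $k = m+n$, a factor $\kappa^{m+1,n}_{ab}$ multiplied by $\partial_{ab}^{m}\partial_{ba}^{n}$ applied to the remaining expression. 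By Definition \ref{defn:Renorm}, the underline subtracts precisely the first-order ($k=1$) terms in which the derivative $\partial_{ab}$ lands on the immediately adjacent resolvent matrices $G_1$ or $G_l$ through the factor $W$; hence the surviving contributions are either: (i) second-order cumulant terms $\kappa^{1,1}_{ab}$ where the derivative hits a resolvent in a \emph{different} trace factor (label $h(e) = \text{res}$), or (ii) higher-order cumulant terms $\kappa^{m+1,n}_{ab}$ with $m+n\ge 2$ where the derivative may hit either a resolvent or the renormalized matrix $W$ itself (label $h(e) = \text{mat}$).

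Next I would iterate this procedure: each derivative $\partial_{ab}G(z) = -G(z)\Delta^{ab}G(z)$ splits a resolvent edge into two resolvent edges joined at new internal vertices, and $\kappa$-vertices accumulate at the index locations. The cumulant expansion is truncated at order $R$ large enough that $\Omega_R = O(N^{-2p})$ by standard moment bounds on $W$ and the boundedness of $G$ on the domain we consider. The diagrams that survive are precisely those in which every $W$ has been paired off via some cumulant and the result can be read off as a polynomial in entries of resolvents, entries of deterministic matrices $B_i\,\mathrm{diag}(\tilde{S}^\circ_\mu)$, and entries of the cumulants $\kappa^{m,n}$. Each such term is naturally encoded by a graph $\Gamma$ in the sense of our definition: the $\kappa$-vertices come from the original $W$-indices, the $\kappa$-edges from the pairings, the internal vertices from differentiated resolvents, and the $G$-edges from resolvent factors.

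It then remains to verify, diagram-by-diagram, the eight properties in Definition \ref{regular}. Most of these are combinatorial bookkeeping: property (1) is immediate since every $W$ is contracted exactly once; property (2) follows by counting how many new $G$-edges and internal vertices each derivative creates; property (3) reflects the balance between incoming and outgoing resolvent directions at each side of a cumulant; properties (6)--(7) track the ``orthogonality'' vertices $V_o$ that arise from the additional $\Im G$ or $G^t$ structure recorded by the labels $i(e), t(e), *(e)$. The set $V_o$ is defined as those internal vertices originating from resolvents of type $\Im G$ together with cumulant endpoints corresponding to those same structural positions. Property (4) follows from the observation that a cycle of purely-internal and $\kappa^2$-vertices must involve at least one pairing.

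The main obstacle is property (5), the bound $2n_{cyc}^{o=0} + n_{cyc}^{o=1} \le 2|E_\kappa^2| - |V_o\cap V_\kappa^2|$ on cycles of low orthogonality multiplicity, and property (\ref{hist-label-prop}), which restricts how orthogonality vertices can attach to $\kappa^{1,1}$-edges depending on the label $h(e)$. For this I would argue as in the ``orthogonality improvement'' step of \cite{cipolloni-erdos-schroder-2021}: whenever a derivative lands on a resolvent that is part of an $\Im G$ block, the corresponding new vertex is automatically an orthogonality vertex, and the only way to produce a cycle with zero or one orthogonality vertices is for the $\kappa^{1,1}$ edges involved to have been generated through the ``mat'' channel and to match up in a very restricted pattern; a combinatorial count then yields the claimed inequality. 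Once properties (1)--(8) are checked, collecting all surviving contributions into $\mathcal{G}^{av}_p$ and bundling the truncation remainder into the $O(N^{-2p})$ error gives \eqref{eq:value-expansion}.
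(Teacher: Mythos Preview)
Your approach is the same as the paper's in spirit: both defer to \cite[Proposition~5.3]{cipolloni-erdos-schroder-2021} for the diagrammatic construction. The paper, however, is far more economical. It observes that properties (1)--(7) of Definition~\ref{regular} are verbatim the conclusions of that proposition --- the non-constant variance $S_{ij}$ plays no role in the \emph{combinatorics} of the cumulant expansion, only later in the value estimate via the graph-splitting procedure --- and singles out property~(\ref{hist-label-prop}) as the only genuinely new claim to be checked. Your identification of property~(5) as a ``main obstacle'' is therefore misplaced; it is already established in \cite{cipolloni-erdos-schroder-2021} and requires no new work here.

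More importantly, your argument for property~(\ref{hist-label-prop}) is conflated with the cycle-counting for~(5) and does not actually address the claim. Property~(\ref{hist-label-prop}) concerns which \emph{$\kappa$-vertices} (not internal vertices) of a $\kappa^{1,1}$-edge may lie in $V_o$. The correct one-line argument is: in the construction of \cite{cipolloni-erdos-schroder-2021}, a vertex of $V_\kappa^2$ is only eligible to be placed in $V_o$ through the mechanism labeled (orth-2) there, which requires that the vertex arose from a derivative $\partial_{ab}$ acting on a resolvent. Hence if $h(e)=\text{mat}$ neither endpoint of $e$ qualifies, and if $h(e)=\text{res}$ only the one endpoint created by the resolvent-splitting can qualify, giving at most one. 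Your discussion of derivatives landing on $\Im G$ blocks pertains to \emph{internal} orthogonality vertices and is orthogonal to this; as written, the verification of~(\ref{hist-label-prop}) is effectively missing.
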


\begin{proof}
This is the result of Proposition 5.3 in \cite{cipolloni-erdos-schroder-2021}. The only modification we make is the additional property \ref{hist-label-prop} from Definition \ref{regular}. This property holds because the vertices from $V_\kappa^2$ can only be selected as orthogonality vertices if they appear as a result of the derivative acting on a $G$ (see (orth-2) in the proof of Proposition 5.3 in \cite{cipolloni-erdos-schroder-2021}).
\end{proof}

In our proof we need to emphasize another property of the diagrams appearing in equation \eqref{eq:value-expansion}.

\begin{lemma}\label{lem:pureG}
For any $\Gamma \in \mathcal{G}_p^{av}$ and any $\kappa^{1,1}$-edge $e$ with $h(e) = \text{res}$, one of its endpoint has one outgoing $G$-edge $e'$ with $L(e') = I$ and one incoming edge $e''$ with $R(e'') = I$.
\end{lemma}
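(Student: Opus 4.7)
The plan is to proceed by direct inspection of how each $\kappa^{1,1}$-edge with $h(e)=\text{res}$ is actually produced by the cumulant-expansion procedure underlying Lemma \ref{lem:cumulant}. Such an edge arises from the leading second-order cumulant $\kappa^{1,1}_{\alpha\beta}=S_{\alpha\beta}$, which pairs the outer $W_{\alpha\beta}$ of some factor $F$ with a single Wirtinger derivative $\partial_{w_{\beta\alpha}}$ acting on a resolvent sitting inside another factor $F'$; the two endpoints of the resulting $\kappa$-edge are exactly the indices $\alpha$ and $\beta$. So the whole question reduces to tracking the $L,R$-labels of the $G$-edges that abut these two vertices.

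First I would normalize the form of the host factor $F$. Using cyclicity of the trace together with the identity $\overline{\langle WM\rangle}=\langle WM^{*}\rangle$ (valid since $W=W^{*}$), every conjugate factor can be rewritten as an ordinary factor $\langle\underline{W\widetilde M}\rangle$, where $\widetilde M$ is an alternating product of (variants of) $G$ and the conjugated deterministic matrices. Adopting the convention that each deterministic factor is absorbed as a label of the adjacent $G$-edge on the side away from the outer $W$, one obtains a clean pattern of labels at the two endpoints of the removed $W$: at the column-vertex $\beta$ the outgoing $G$-edge coming from $F$ carries $L=I$ (this is the original-type case), and at the row-vertex $\alpha$ the incoming $G$-edge coming from $F$ carries $R=I$ (this is the rewritten-conjugate case).

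Next I would analyze the split produced inside $F'$ by the derivative. Applying $\partial_{w_{\beta\alpha}}$ to a host $G$-edge with value $LGR$ (and similarly to $LG^{*}R$ or $L\,\Im G\,R$) gives
\begin{equation*}
-LGE^{\beta\alpha}GR \;=\; -(LG)_{\cdot,\beta}\,(GR)_{\alpha,\cdot},
\end{equation*}
which by construction produces two new $G$-edges, one incoming to $\beta$ with $R=I$, and one outgoing from $\alpha$ with $L=I$. For transposed host edges ($G^{T}$, $\bar G$) the computation interchanges $\alpha$ and $\beta$, but once each $G$-edge is oriented by the direction of its underlying resolvent the structural conclusion is identical.

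Combining the two pieces: at the distinguished endpoint of the $\kappa$-edge — namely $\beta$ when the normalized $F$ is of original type, and $\alpha$ when $F$ is of conjugate type — the surviving edge inherited from $F$ and the new edge deposited by the split fit together to yield simultaneously an outgoing $G$-edge with $L=I$ and an incoming $G$-edge with $R=I$, which is the stated conclusion. The main obstacle will be the bookkeeping across the resolvent variants $G,G^{*},G^{T},\Im G$ in both non-conjugated and conjugated factors: one must check that the absorption convention for deterministic matrices and the choice of edge orientation remain compatible with the complex-conjugate rewrite, so that the split always deposits its $R=I$ (respectively $L=I$) decoration on the same side of the $\kappa$-vertex as the pre-existing $L=I$ (respectively $R=I$) edge from $F$. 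This parity consistency is really the crux of the lemma and is essentially what is being encoded in property~8 of Definition \ref{regular}.
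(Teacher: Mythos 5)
Your proposal is correct and follows essentially the same line of reasoning as the paper: it traces the $\kappa^{1,1}$-edge back to its origin in the cumulant expansion, observes that the outer $W_{ab}$ in the host factor is immediately followed by a resolvent (so the outgoing $G$-edge at the column vertex has $L=I$), and observes that the derivative split $G_k \to G_k\Delta^{ba}G_k$ deposits an incoming $G$-edge with $R=I$ at that same vertex. You supply a bit more bookkeeping than the paper's three-sentence proof — in particular you work out the conjugated factors via $\overline{\langle WM\rangle}=\langle WM^*\rangle$ and you flag the orientation issue for transposed/conjugated resolvents, which the paper leaves implicit — but the structure of the argument and the distinguished vertex are the same, so this is the same proof, merely more verbose.
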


\begin{proof}
Suppose $e = (ab)$ comes from the cumulant expansion with respect to $w_{ab}$ in $w_{ab} \langle  \Delta^{ab} G_1 B_1 \ldots G_l B_l\rangle$. Since $h(e) = \text{res}$, $\partial_{ba}$ hits a resolvent $G_k$, which becomes $G_k \Delta^{ba} G_k$. Then vertex $b$ has an outgoing edge $e'$ with resolvent $G_1$ and $L(e') = I$ and an incoming edge with resolvent $G_k$ and $R(e) = I$.
\end{proof}

Our final lemma computes the values of regular graphs along with the extra condition that $\kappa_{1,1} = \frac{1}{N}$. This lemma is from \cite{cipolloni-erdos-schroder-2021}. The reason we cannot apply this directly to our cumulant expansion is that the value of $\kappa^{1,1}$ we use is not uniform; our work in the next section is to modify the graphs so that we can reduce  the computation of our graph values to those that appear in the following lemma.

\begin{lemma}\label{lem:value}
If $\Gamma_{\kappa,G}$ is $(l,p, i,\mathfrak a,\mathfrak t)$-regular, $\kappa^{1,1}_{xy} = \frac{1}{N}$, $|\kappa^{p,q}_{xy}| \leq C N^{-(p+q)/2}$, and $L(e),R(e) \in \mathbb M_k \cup \mathbb M_k^\circ$ for all $G$-edges $e$, then
\begin{equation}
    \Val(\Gamma_{\kappa,G}) \prec \begin{cases}
    \rho^{2(b+1)p}N^{2bp}L^{-2bp}, &b = l\,,\\
    \Lambda_k^{2(a+t)p}\rho^{2ip \vee 2(b+1)p
    }N^{p(a+t+2b)}L^{-p(1+2b)},&b<l\,,
    \end{cases}
\end{equation}
where $b:= l - a -t$.
\end{lemma}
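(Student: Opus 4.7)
The plan is to follow the diagrammatic strategy of \cite{cipolloni-erdos-schroder-2021}, since under the assumption $\kappa^{1,1}_{xy}=1/N$ the counting reduces to the pure Wigner setting, and the only new feature is that each $G$-edge is now decorated by observables $L(e),R(e)\in\mathbb M_k\cup\mathbb M_k^\circ$. The argument breaks into three stages: a graph-reduction step that turns every $\kappa^{1,1}$-edge into a normalized-trace contraction, a per-cycle estimate, and a final power-of-$N$ bookkeeping.

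First, I would simplify the graph by absorbing the easy cumulant edges. For each $\kappa^{1,1}$-edge $e=(uv)$ with $h(e)=\mathrm{res}$, Lemma \ref{lem:pureG} supplies an endpoint with a pure incoming $G$-edge ($R=I$) and a pure outgoing $G$-edge ($L=I$); summing the index at this endpoint splices the two adjacent edges into a single resolvent product and lets the factor $\kappa^{1,1}=1/N$ turn the ordinary sum into a normalized trace, at the cost of removing one vertex and one edge. For $\kappa^{1,1}$-edges with $h(e)=\mathrm{mat}$ no splicing occurs, but the $1/N$ replaces the gain from a vertex summation one-for-one. Higher-order edges with $d_g=k$ provide factors of order $N^{-k/2}$ by the sub-exponential assumption. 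After these reductions the remaining object is a disjoint union of normalized traces of alternating resolvent/observable cycles, precisely the configuration that admits a local-law analysis.

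Second, I would bound each cycle. Call a cycle \emph{clean} if it carries no vertex of $V_o$, and \emph{marked} otherwise. A clean cycle is a trace of a pure product of resolvents $G$, $G^*$ or $\Im G$, and by iterated Ward identities $\langle G_1\cdots G_q\rangle \prec (\rho/\eta)^{q-1}=(N\rho^2/L)^{q-1}$; an additional factor $\rho$ appears for every $\Im G$-edge on top of what Ward produces, coming from $\Im G\prec\rho+(N\eta)^{-1}$. This is precisely what yields the $\rho^{2(b+1)p}$ and $L^{-2bp}$ behaviour in the $b=l$ case. A marked cycle carries at least one traceless observable from $\mathbb M_k^\circ$, and the bound is obtained by inserting the definitions of $\Xi_B,\overline\Xi_B\le\Lambda_k$ together with Lemma \ref{GAGA&Xi} to pass from the averaged-resolvent trace to the spectral sum that defines $\Lambda_k$. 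Each marked cycle therefore contributes one power of $\Lambda_k^2$ per pair of orthogonality vertices on it, producing the $\Lambda_k^{2(a+t)p}$ factor after we pair up $V_o$-vertices across the $p$-fold product structure.

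Third, I would collect the factors globally. The internal-vertex sum gives $N^{|V_i|}=N^{2(l-1)p}$, the cumulant edges contribute $N^{-|E_g|/2}$ in total after the splicing of the $\kappa^{1,1}$-edges is taken into account, and the $G$-edge imaginary-part count $\#\{i(e)=1\}=2ip$ produces $\rho^{2ip}$. The separation into clean and marked cycles is governed by property~(5) of Definition~\ref{regular}, namely $2n_{\mathrm{cyc}}^{o=0}+n_{\mathrm{cyc}}^{o=1}\le 2|E_\kappa^2|-|V_o\cap V_\kappa^2|$, and it is this inequality, combined with property~(7), that determines whether the final $L$-exponent reads $-2bp$ (when $b=l$, no marked cycles) or $-p(1+2b)$ (when $b<l$, at least one cycle is $V_o$-rich enough to save an extra $L^{-p}$). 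Putting the factors together produces the two cases in the statement.

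The main obstacle is the delicate dichotomy between $b=l$ and $b<l$: the gap of $L^{-p}$ in the exponent does not come from any one edge or vertex individually but from the structural bound in property~(5), and extracting it requires a careful case split on which cycles hold the orthogonality vertices and how many $\kappa^{1,1}$-edges with $h=\mathrm{res}$ share an endpoint with $V_o$ (property~(8)). I would handle this exactly as in the Wigner case of \cite[Proposition 5.3]{cipolloni-erdos-schroder-2021}, with the single modification that every occurrence of $\langle G\cdots\rangle$ for a product that passes through a $V_o$-vertex is replaced by an application of $\Xi_B\le\Lambda_k$; once this substitution is made, the combinatorial accounting is identical and yields the stated bound.
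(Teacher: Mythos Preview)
Your proposal is essentially correct and takes the same approach as the paper: both defer to \cite{cipolloni-erdos-schroder-2021}, recognizing that under $\kappa^{1,1}=1/N$ the value estimate reduces to the Wigner case with the single substitution $\Xi\to\Lambda_k$ whenever a $V_o$-vertex sits on a cycle with observables in $\mathbb M_k\cup\mathbb M_k^\circ$. The paper in fact gives no proof at all beyond the citation, so your sketch supplies more detail than the original.

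One small correction: the combinatorial accounting you invoke at the end is carried out in \cite[Proposition~5.6]{cipolloni-erdos-schroder-2021}, not Proposition~5.3 (the latter establishes the diagram structure, i.e.\ the analogue of Lemma~\ref{lem:cumulant} here). Also, your appeal to Lemma~\ref{lem:pureG} in the first stage is unnecessary for this lemma: once $\kappa^{1,1}=1/N$, summing over either endpoint of a $\kappa^{1,1}$-edge yields a normalized trace regardless of whether the adjacent $G$-edges are pure; the pure-$G$ property is only needed upstream, in the splitting procedure of Lemma~\ref{lem:finalsplit}, to ensure the inserted $N\dg\tilde S_\mu^\circ$ lands between two resolvents and preserves the orthogonality count.
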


\subsection{The Graph Splitting Procedure}

We remark that there is only one difference between the case that we are considering here and the diagrams from \cite{cipolloni-erdos-schroder-2021}: they use the fact that the lowest order cumulants $\kappa^{1,1}$
are all uniformly $\frac{1}{N}$. This allows them to re-express some of the quantities related to the diagrams in terms of traces of matrix products. This is the key step that allows them to apply Proposition 5.6 to bound the value of the graph.  In the absence of this important condition, what we must do is find a way to take our graphs into an expression that would be useful.

We find a procedure that takes any diagram $\Gamma$ and re-expresses it as a sum of other diagrams. Namely,
\begin{equation}
\text{Val}(\Gamma) = \sum_{\mu} \Val(\tilde{\Gamma}^\mu).   
\end{equation}
The details of our transformation will show that the diagrams $\tilde{\Gamma}^\mu$  are $(l,p, i,\mathfrak a,\mathfrak t)$-regular diagrams along with the property that the new `$\kappa^{1,1}$' edges have value $\frac{1}{N}$. Formally, we find another way to write the covariance matrices $\kappa^{1,1}$ and incorporate these terms into one of the $L$ or $R$ matrices that multiply the $G$. The end result of this procedure is to formally treat the old $\kappa^{1,1}$ edges as having value $\frac{1}{N}$. We now begin to describe this procedure more formally.

For every $\kappa^{1,1}$ edge, we will decompose the diagram $\Gamma$ into $N+1$ further diagrams. Thus, if we let $E_2$ be the set of all $\kappa^{1,1}$ edges, we will decompose $\Gamma$ into $(N+1)^{|E_2|}$ edges. The graph splitting procedure essentially treats every edge independently, so to describe the construction, it is best to consider the case that there is only a single $\kappa^{1,1}$ edge.

First, consider a $\kappa^{1,1}$ edge $e=(x,y)$. In case $h(e) = res$, we know from Lemma \ref{lem:pureG} that one of $x$ or $y$ has the property that it has a single incoming edge with $R(e)=I$ and a single outgoing edge with $L(e)=I$. Assume that this vertex is $x$.  When we split $\kappa^{1,1}$ later, then this property allow us to introduce a trace 0 matrix in a location that is useful for cancellations. We remark that by Definition \ref{regular} (1) if there were more $\kappa^{1,1}$ edges, this vertex $x$ would not be shared with the other $\kappa^{1,1}$ edges. In case $h(e) = mat$, $x$ can be chosen arbitrarily among the two vertices adjacent to the $\kappa^{1,1}$ edge.

We have the following matrix decomposition of $S_{xy}:=\kappa^{1,1}_{x,y}$. 
\begin{equation}
    S_{xy} = \sum_{\mu} \tilde{S}_{x \mu} \tilde{S}_{\mu y}.    
\end{equation}
Though this procedure formally introduces the vertex $\mu$, these vertices will not be introduced into our diagram. Instead, the first $N$ of $N+1$ diagrams will be indexed by this vertex $\mu$; we call these diagrams $\tilde{\Gamma}^1,\ldots,\tilde{\Gamma}^N$. The remaining graph we will call $\tilde{\Gamma}^{\ext}$, and will be derived via a more complicated resummation procedure. 

After fixing a value of $\mu$, we still have to perform more manipulation in order to derive the diagrams $\tilde{\Gamma}^\mu$. This procedure is quite similar to those performed in Sections \ref{sec:diagM} and \ref{sec:generalM}; namely, we further split  $\tilde{S}_{x\mu}$ and interpret this as a trace $0$ matrix. Recall the traceless part as follows:
\begin{equation} \label{eq:trace}
\begin{aligned}
  & \tilde{S}^{\circ}_{x\mu}= \tilde{S}_{x\mu} - \frac{1}{N}\\
    &\dg\tilde{S}^\circ_\mu = \dg\tilde{S}_\mu -\Ex{\dg\tilde{S}_\mu} = \dg\tilde{S}_\mu - \frac{1}{N}.
\end{aligned}
\end{equation}

We define the diagram $\tilde{\Gamma}^\mu$ as follows. We take the diagram $\Gamma$ and redefine labels so that for the incoming edge to vertex $x$, we multiply $R$ on the right by $N\dg \tilde{S}^{\circ}_{\mu}$. At $y$, we change the label so that at the incoming edge to $y$, we multiply $R$ on the right by $N\dg \tilde{S}_{\mu}$. Finally, we change the value of $\kappa^{1,1}$ formally to $\frac{1}{N}$ on the edge $\{x, y\}$.

In case $h(e) = res$, the main benefit is that even though we remove the orthogonality at vertex $y$, vertex $x$ becomes a trace zero orthogonality vertex.
Thus, there is no net loss in the number of orthogonality vertices in all of the diagrams $\tilde{\Gamma}^{\mu}$ for $\mu$ between $1$ and $N$. Moreover, the traceless matrix involved in this new orthogonality vertex is in $\mathbb M_0^\circ$.

In case $h(e) = mat$, neither $x$ nor $y$ are orthogonality vertices of $\Gamma$, so our procedure doesn't affect $V_o$.

Finally, we define our final diagram $\tilde{\Gamma}^{\ext}$ by taking the diagram $\Gamma$ and changing the label $\kappa^{1,1}$ to $\frac{1}{N}$ at the edge $\{x,y\}$. It is easy to check that
\begin{equation}
    \Val(\Gamma) = \frac{1}{N}\sum_{\mu = 1}^N \Val(\tilde{\Gamma}^{\mu}) + \Val(\tilde{\Gamma}^{\ext})\,.
\end{equation}

Here is an example of the graph splitting procedure. We only draw the verteces and edges that connect to the $\kappa^{1,1}$ edge.
\begin{center}
    \includegraphics{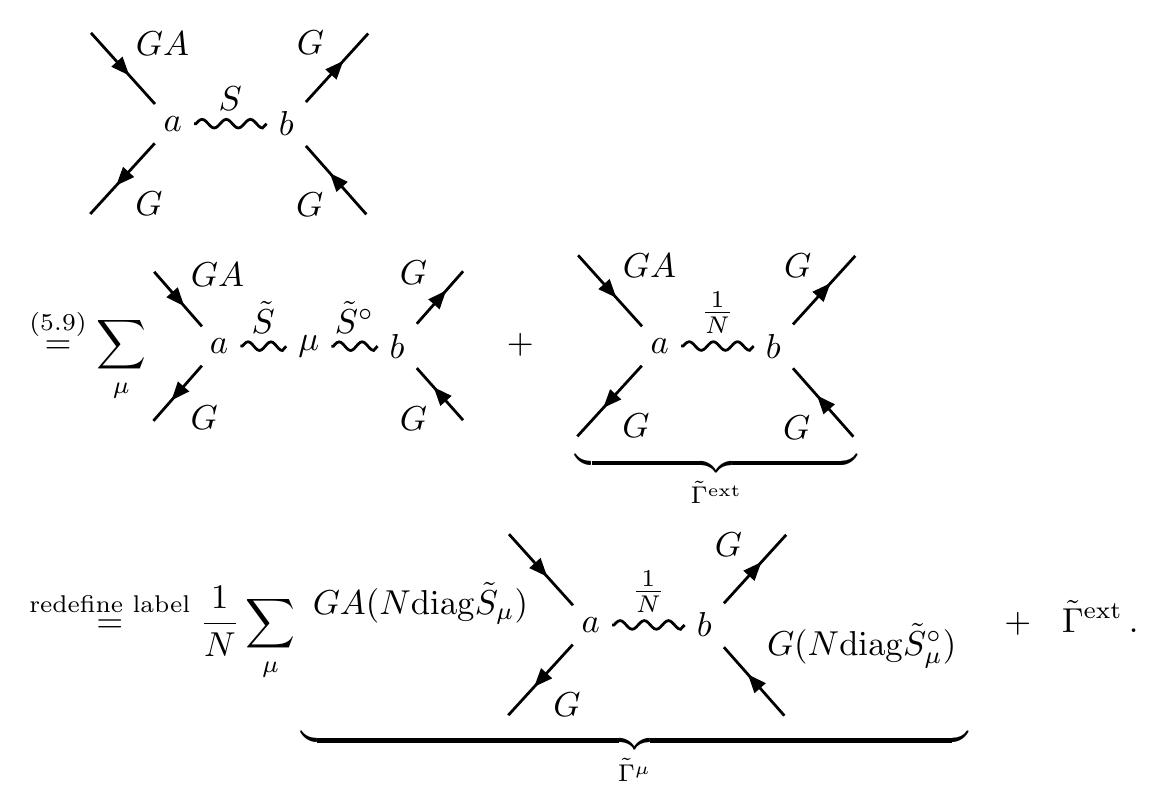}
\end{center}

\begin{lemma}
The diagrams $\tilde{\Gamma}^\mu$ or $\tilde{\Gamma}^{\ext}$ that have been constructed are $(l,p, i,\mathfrak a,\mathfrak t)$-regular diagrams. 

\end{lemma}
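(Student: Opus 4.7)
The plan is to verify, one at a time, the eight defining properties of Definition \ref{regular} for each of the split diagrams, using the crucial observation that the splitting procedure does not alter the underlying combinatorial graph structure $(V, E_g, E_\kappa)$, the $h$-labels, the degree labels $r(e), s(e)$, or the resolvent-type labels $i(e), t(e), *(e), z(e)$. The procedure only modifies (i) the numerical value assigned to the split $\kappa^{1,1}$-edge (replaced by $\frac{1}{N}$), (ii) the matrix labels $R(e)$ on one or two $G$-edges incident to $x$ and $y$ (multiplied by $N\dg\tilde{S}^\circ_\mu$ at the incoming edge of $x$ and by $N\dg\tilde{S}_\mu$ at the incoming edge of $y$), and, for $\tilde{\Gamma}^\mu$ with $\mu \in [1,N]$, possibly (iii) the orthogonality set $V_o$. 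Since properties 1--4 depend only on graph structure and $h$-labels, they transfer immediately from $\Gamma$ to each of its split descendants.

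For $\tilde{\Gamma}^{\ext}$ I would keep $V_o$ unchanged. Properties 5--8 then hold verbatim from $\Gamma$, since they depend on $V_o$, the $h$-labels, and the graph structure, none of which change.

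For $\tilde{\Gamma}^\mu$ with $\mu \in [1,N]$, in the case $h(e) = \text{res}$ I would define $V_o' := V_o \cup \{x\}$, using Lemma \ref{lem:pureG} to identify $x$ as a vertex whose unique incoming $G$-edge carried $R = I$ and whose unique outgoing $G$-edge carried $L = I$ in $\Gamma$; after the modification this incoming edge carries $R = N\dg\tilde{S}^\circ_\mu \in \mathbb{M}_0^\circ$, which is precisely a traceless matrix and makes $x$ a legitimate orthogonality vertex. In the case $h(e) = \text{mat}$ I would keep $V_o' = V_o$, since no new orthogonality is required. Property 6 is preserved in both cases because $x \in V_\kappa$ and hence $V_i \cap V_o'$ is unchanged. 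Property 8 is maintained because in the $\text{mat}$ case $V_o$ is unchanged while in the $\text{res}$ case exactly one endpoint of the split edge lies in $V_o'$, which is consistent with the ``at most one'' allowance. Property 7 is preserved by incrementing $\mathfrak{a}$ by one in the $\text{res}$ case to account for the new traceless matrix factor recorded on the incoming edge of $x$, ensuring the upper bound on $|V_o' \cap V_\kappa^2|$ remains compatible with the parameters.

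The main obstacle is verifying property 5, which bounds $2n_{cyc}^{o=0} + n_{cyc}^{o=1}$ from above by $2|E_\kappa^2| - |V_o \cap V_\kappa^2|$. Promoting $x$ to an orthogonality vertex decreases the right-hand side by $1$, so I must argue the left-hand side decreases by at least $1$. The key step is to observe that $x$ has both its incoming and outgoing $G$-edges inside the subgraph $(V_\kappa \cup V_i, E_g)$, so $x$ lies on at least one $E_g$-cycle, and adding $x$ to $V_o'$ converts any such cycle from having $o = 0$ to $o \ge 1$ (decreasing $n_{cyc}^{o=0}$) or from $o = 1$ to $o \ge 2$ (decreasing $n_{cyc}^{o=1}$), producing a decrease of at least $1$ on the LHS in either case. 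This cycle-counting argument, together with property 4 of the original diagram which guarantees each $E_g$-cycle already contains at least two $V_\kappa^2$-vertices, completes the verification of regularity for all the split diagrams.
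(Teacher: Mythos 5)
Your proposal diverges from the paper's proof in a way that produces genuine gaps, and the divergence matters.

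The paper's proof keeps the orthogonality set $V_o$ at \emph{fixed cardinality}: if $y \in V_o$ in $\Gamma$, the paper \emph{swaps} $y$ out and puts $x$ in; if $x$ was already in, nothing changes; if neither was in, $V_o$ is left alone. Because $|V_o|$ and $|V_o \cap V_\kappa^2|$ are unchanged, properties 6 and 7 of Definition \ref{regular} carry over with the \emph{same parameters} $(l,p,i,\mathfrak a,\mathfrak t)$, which is precisely what the lemma asserts. You instead set $V_o' := V_o \cup \{x\}$, which (in the $h(e)=\text{res}$ case) can increase $|V_o \cap V_\kappa^2|$ by one. To compensate you ``increment $\mathfrak a$ by one,'' but this is not an allowed move: the lemma claims the split diagrams are $(l,p,i,\mathfrak a,\mathfrak t)$-regular for the \emph{original} parameters, since Lemma \ref{lem:value} is applied with those fixed parameters to control the value. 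Changing $\mathfrak a$ changes $a+t$, hence $b = l-a-t$, hence the bound, so your diagrams would fall into a different regularity class and the downstream estimates would not close. Moreover $\mathfrak a$ indexes which of the $l$ resolvents carry an adjoint (a subset of $[l]$, not a counter), so ``incrementing'' it is not even well-defined, and property 6's equality $|V_i \cap V_o| = 2p(a+t-\mathbf 1\{l \in \mathfrak a \cup \mathfrak t\})$ would then fail since $V_i \cap V_o'$ is unchanged while the right-hand side jumps.

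Your argument for property 8 also has a gap: in the $h(e)=\text{res}$ case you assert ``exactly one endpoint of the split edge lies in $V_o'$,'' but if $y \in V_o$ already (which property 8 permits — it allows up to one of $x,y$ to be in $V_o$), then after adding $x$ both endpoints lie in $V_o'$, directly violating property 8. This is exactly the scenario the paper's swap handles.

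Finally, on property 5: the paper dispenses with it in one line, observing that the inequality is a consequence of property 4 and therefore holds regardless of how $V_o$ is chosen — no cycle surgery is needed. Your cycle-counting argument is not only unnecessary but also incomplete: the claim that ``$x$ lies on at least one $E_g$-cycle'' does not yield that $x$ lies on an \emph{isolated} cycle. If the directed cycle through $x$ passes through a vertex of $V_\kappa^{\ge 3}$, it is not an isolated cycle in $(V_\kappa \cup V_i, E_g)$, and then adding $x$ to $V_o'$ leaves all the $n_{cyc}^{o=k}$ counts unchanged while decreasing the right-hand side by one, so your intended decrease on the left-hand side does not materialize.

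To repair the argument, mirror the paper: in the $\text{res}$ case replace $y$ by $x$ in $V_o$ (possible since at most one of them is in $V_o$ by property 8, and $x$ is a valid orthogonality vertex after the insertion of $N\dg\tilde{S}^\circ_\mu \in \mathbb M_0^\circ$ between the two pure $G$-edges at $x$, per Lemma \ref{lem:pureG}), keep $|V_o|$ and all parameters fixed, note properties 1--4 are purely structural, and dispose of property 5 via the counting argument from property 4.
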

\begin{proof}
Let $e=(x,y)$ be the single $\kappa^{1,1}$ edge in the original diagram $\Gamma$.

The only modifications that we have to the diagrams $\tilde{\Gamma}^i$ from the original diagram $\Gamma$ are in the labels of the edges are are adjacent to the vertices $(x,y)$. In addition, vertices $x$ and $y$ are our only possible changes for $V_o$. From this property, it is clear to see that properties 1-4 of Definition \ref{regular} still hold after the modification. Furthermore, property 5 of Definition \ref{regular} is a consequence of property 4 regardless of the choice of $V_o$. This is a simple counting argument.

With regards to the sets of vertices $V_o$, we describe a bit more elaborately what changes are made. Due to property $8$ of Definition \ref{regular}, we know that at most one of $x$ and $y$ is in the set $V_o$.
Recall that we have earlier chosen the vertex $x$ to lie between the product of two pure $G$ matrices without any intermittent matrix product. Once we multiply in between the two pure $G$ matrices at $x$ by the matrix $\text{diag}(\tilde{S}^{\circ})_{\mu}$, we see that $x$ is in $V^{0tr}$ of $\tilde \Gamma^i$. Regardless of whether $x \in V_o$ or $y \in V_o$ in the graph $\Gamma$, we put the vertex $x$ into $V_o$ for $\tilde \Gamma^i$. Thus, the cardinality of any the sets $V_o$ and $V_o \cap V_\kappa^2$ do not change. Thus,  properties 6-8 of Definition \ref{regular} still hold. 
\end{proof}

The previous discussion has established the following lemma.
\begin{lemma}\label{lem:finalsplit}
Every $\Gamma \in \mathcal G_p$ can be split into a set of diagrams, $\widetilde{\Gamma}^{(\mu_1,...,\mu_{m})}$ for $\mu_i \in [n] \cup \{0\},1 \leq i \leq m$ with $m$ being the number of $\kappa^{1,1}$ edges in $\Gamma$, in the sense that
\begin{equation}
    \Val(\Gamma_{\kappa,G}) = \sum_{\mu_i \in [n] \cup \{\ext\},1 \leq i \leq m} \Big(\frac{1}{N} \Big)^{\# \{i: \mu_i \not = \ext\}} \Val(\widetilde{\Gamma}^{(\mu_1,...,\mu_{m})}_{\kappa',G})\,,
\end{equation}
where ${\kappa'}^{1,1} = \frac{1}{N}$ and ${\kappa'}^{p,q}= {\kappa}^{p,q}$ for $(p,q) \not= (1,1)$.
Furthermore, all of the diagrams $\widetilde{\Gamma}^{(\mu_1,...,\mu_{m})}_{\kappa',G}$ are $(l,p, i,\mathfrak a,\mathfrak t)$-regular.
\end{lemma}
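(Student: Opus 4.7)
The plan is to prove Lemma \ref{lem:finalsplit} by induction on $m$, the number of $\kappa^{1,1}$-edges in $\Gamma$. The base case $m=0$ is trivial (no edges to split, so $\widetilde{\Gamma}^{()} = \Gamma$ and $\kappa' = \kappa$ already). For the inductive step, I would apply the single-edge splitting procedure constructed in the paragraphs immediately preceding the lemma to any one $\kappa^{1,1}$-edge $e=(x,y)$ of $\Gamma$. Writing
\begin{equation*}
\kappa^{1,1}_{xy} \;=\; \sum_\mu \tilde S_{x\mu}\tilde S_{\mu y}
\;=\; \frac{1}{N^2}\sum_\mu [N\dg\tilde S^\circ_\mu]_{xx}[N\dg\tilde S_\mu]_{yy} + \frac{1}{N},
\end{equation*}
where the last equality uses $\tilde S \mathbf{1}=\mathbf{1}$ (so that $\sum_\mu \tilde S_{\mu y}=1$), one absorbs the diagonal factors into the $R$-labels of the incident $G$-edges at $x$ and $y$ per the recipe described in the text, yielding
\begin{equation*}
\Val(\Gamma) \;=\; \frac{1}{N}\sum_{\mu=1}^N \Val(\widetilde\Gamma^\mu)\;+\;\Val(\widetilde\Gamma^{\ext}),
\end{equation*}
in which every new diagram has the edge $(x,y)$ carrying the formal value $\frac{1}{N}$.

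The key observation that makes the induction run is that by Definition \ref{regular}(1) the $\kappa$-edges form a perfect matching on $V_\kappa$, so different $\kappa^{1,1}$-edges have pairwise disjoint endpoint sets. Consequently, the label modifications for distinct $\kappa^{1,1}$-edges touch disjoint groups of $G$-edges and commute trivially. Moreover, after one splitting step the remaining "un-split" $\kappa^{1,1}$-edges are untouched and still have their original value $S_{\cdot\cdot}$, so the inductive hypothesis applies to each of the $N+1$ intermediate diagrams. Iterating over all $m$ edges produces the claimed $(N+1)^m$-fold decomposition with prefactor $(1/N)^{\#\{i:\mu_i\ne \ext\}}$ and the claimed values $\kappa'$.

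The last thing to verify is $(l,p,i,\mathfrak a,\mathfrak t)$-regularity of every $\widetilde\Gamma^{(\mu_1,\dots,\mu_m)}_{\kappa',G}$; this follows by applying, at each step of the induction, the lemma immediately preceding the statement. In the $\ext$-branch only the numerical value of $\kappa^{1,1}_{xy}$ changes so regularity is obvious, while in the $\mu\in[N]$-branch the crucial point—already discussed in the text—is that choosing $x$ via Lemma \ref{lem:pureG} (when $h(e)=\text{res}$) or arbitrarily (when $h(e)=\text{mat}$) places $x$ between two pure $G$-edges, so inserting the traceless diagonal $\dg\tilde S^\circ_\mu\in\mathbb M_0^\circ$ there restores (or preserves) the orthogonality count $|V_o|$ and $|V_o\cap V_\kappa^2|$. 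The main obstacle, and the reason property (8) of Definition \ref{regular} was built into Lemma \ref{lem:cumulant}, is precisely this orthogonality-vertex bookkeeping at each splitting step; but because each step is local to a single matching edge, these accounts remain additive under iteration, so no global conflict ever arises.
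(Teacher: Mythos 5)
Your proof is correct and follows essentially the same approach as the paper, which establishes the single-edge splitting procedure in the preceding paragraphs and then states ``The previous discussion has established the following lemma.'' You make the iteration over all $m$ edges explicit as an induction, correctly observing that distinct $\kappa^{1,1}$-edges have disjoint endpoints by the perfect-matching property (Definition~\ref{regular}(1)), so the $L$/$R$-label modifications at distinct edges act on distinct incoming $G$-edges and commute; you also supply the algebraic fact $\tilde S\mathbf{1}=\mathbf{1}$ (inherited from $S\mathbf{1}=\mathbf{1}$ via the positive square root), which the paper uses implicitly in the centering $\dg\tilde{S}^\circ_\mu = \dg\tilde{S}_\mu - \tfrac{1}{N}I$.

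One small caveat: your parenthetical assertion that an arbitrary choice of $x$ in the $h(e)=\mathrm{mat}$ case ``places $x$ between two pure $G$-edges'' is not accurate. In that branch the incident $G$-edges generally carry nontrivial $L$/$R$ labels from the cyclic trace structure, so multiplying $R$ by $N\dg\tilde{S}^\circ_\mu$ need not make $x$ an orthogonality vertex. The correct reason regularity is preserved there, as the text notes, is simply that neither $x$ nor $y$ lies in $V_o$ to begin with (Definition~\ref{regular}(8)), so one leaves $V_o$ unchanged and the counts in properties (6)--(7) are preserved vacuously. This does not affect your conclusion, and the paper's own informal wording shares the same slip.
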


\begin{proof}[Proof of Lemma \ref{lem:underline}]
The high probability estimates on $\Ex{\underline{WG_1 A_1 G_2 A_2}}$ and other similar quantities are readily derived by computing high moments and applying Markov's inequality. The cumulant expansion Lemma \ref{lem:cumulant} gives an expression of the moments in terms of values of graphs. The values of these graphs are determined by the splitting procedure in Lemma \ref{lem:finalsplit} and the evaluation of graph values in appropriate conditions as in Lemma \ref{lem:value}. The combinatorics of the sum over graphs is exactly the same as that of \cite{cipolloni-erdos-schroder-2021} and we can derive the exact same estimates.
\end{proof}




\newpage

\bibliographystyle{abbrv}
\bibliography{bibliography}

\end{document}